\newtheorem{theorem}{Theorem}[section]
\newtheorem{corollary}[theorem]{Corollary}
\newtheorem{lemma}[theorem]{Lemma}
\newtheorem{proposition}[theorem]{Proposition}
\newtheorem*{theorem*}{Theorem}
\newtheorem*{lemma*}{Lemma}
\newtheorem*{remark*}{Remark}
\newtheorem*{definition*}{Definition}
\newtheorem*{proposition*}{Proposition}
\newtheorem*{corollary*}{Corollary}
\numberwithin{equation}{section}
\newcommand{\real}{\mathbb{R}}
\let\ced=\c         
\def\qed{\,\unskip\kern 6pt \penalty 500
\raise -2pt\hbox{\vrule \vbox to8pt{\hrule width 6pt
\vfill\hrule}\vrule}\par}
\definecolor{darkblue}{rgb}{0.05, .05, .65}
\definecolor{darkgreen}{rgb}{0.1, .65, .1}
\definecolor{darkred}{rgb}{0.8,0,0}
\newcommand{\beqn}{\begin{equation}}
\newcommand{\eeqn}{\end{equation}}
\newcommand{\bear}{\begin{eqnarray}}
\newcommand{\eear}{\end{eqnarray}}
\newcommand{\bean}{\begin{eqnarray*}}
\newcommand{\eean}{\end{eqnarray*}}
\newcommand{\mbx}{\boldsymbol{\xi}}
\begin{document}

\title{\huge \bf Second order asymptotics and uniqueness for self-similar profiles to a singular diffusion equation with gradient absorption}

\author{
\Large Razvan Gabriel Iagar\,\footnote{Departamento de Matem\'{a}tica
Aplicada, Ciencia e Ingenieria de los Materiales y Tecnologia
Electr\'onica, Universidad Rey Juan Carlos, M\'{o}stoles,
28933, Madrid, Spain, \textit{e-mail:} razvan.iagar@urjc.es}
\\[4pt] \Large Philippe Lauren\ced{c}ot\,\footnote{Laboratoire des Math\'ematiques (LAMA) UMR 5217, Universit\'e Savoie-Mont Blanc, CNRS, F-73000, Chamb\'ery France. \textit{e-mail:} philippe.laurencot@univ-smb.fr}\\ [4pt] }
\date{\today}
\maketitle

\begin{abstract}
Solutions in self-similar form presenting finite time extinction to the singular diffusion equation with gradient absorption
$$
\partial_t u - \mathrm{div}(|\nabla u|^{p-2}\nabla u) +|\nabla u|^{q}=0 \qquad {\rm in} \ (0,\infty)\times\real^N
$$
are studied when $N\geq1$ and the exponents $(p,q)$ satisfy
$$
p_c=\frac{2N}{N+1}<p<2, \qquad p-1<q<\frac{p}{2}.
$$
\emph{Existence and uniqueness} of such a solution are established in dimension $N=1$. In dimension $N\geq2$, existence  of radially symmetric self-similar solutions is proved and a fine description of their \emph{behavior as $|x|\to\infty$} is provided.
\end{abstract}

\bigskip

\noindent {\bf MSC Subject Classification 2020:} 35C06, 34D05, 35K67, 34C41, 35K92.

\smallskip

\noindent {\bf Keywords and phrases:} fast diffusion equation, self-similar solutions, finite time extinction, gradient absorption, simultaneous extinction.

\section{Introduction}

The aim of this paper is to give an insight on the phenomenon of finite time extinction of non-negative solutions to the following singular diffusion equation with an absorption term depending on the gradient
\begin{equation}\label{eq1}
\partial_t u -\Delta_{p}u+|\nabla u|^{q}=0, \qquad (t,x)\in(0,\infty)\times\real^N,
\end{equation}
where $N\geq1$, $\Delta_p u = \mathrm{div}(|\nabla u|^{p-2}\nabla u)$ is the standard $p$-Laplacian operator, and
\begin{equation}\label{exp.eq1}
p_c:=\frac{2N}{N+1}<p<2, \qquad p-1<q<\frac{p}{2}.
\end{equation}
A significant feature of Eq.~\eqref{eq1}, when $p\in(1,2)$, is that it involves a competition between a singular diffusion term, which corresponds to the \emph{fast $p$-Laplacian equation}, and a nonlinear absorption term in the form of a power of the euclidean norm of the gradient. This competition can generate a number of different mathematical properties of solutions to Eq.~\eqref{eq1}, according to the range of the exponents $(p,q)$, that were classified in the authors' previous work \cite{IL12}. In particular, it is shown therein that, in the specific range~\eqref{exp.eq1}, \emph{finite time extinction} occurs for suitable initial conditions. Recall that a solution $u$ to Eq.~\eqref{eq1} is said to vanish in finite time if there exists $T_e\in(0,\infty)$ such that $u(t)\not\equiv0$ for any $t\in(0,T_e)$, but $u(t,x)=0$ for any $(t,x)\in (T_e,\infty)\times \real^N$.

In the last three decades, both the semilinear problem ($p=2$) and the degenerate diffusion-absorption problem ($p>2$) related to Eq.~\eqref{eq1} have been investigated, with emphasis on the large time behavior. As an outcome of many successive works, see for example \cite{ATU04, BKaL04, BL99, BLS01, BLSS02, BSW02, BRV97, BGK04, GL07, GGK03, Gi05, BVD13, HJBook} and references therein, an almost complete understanding of the qualitative properties of solutions to Eq.~\eqref{eq1} in the semilinear case $p=2$ is available. It has been noticed in particular that the gradient-type absorption becomes very strong and dominant if $q\in(0,1)$, where finite time extinction is a typical phenomenon, while for $q>1$ the diffusion implies that solutions are positive in $\real^N$ and global in time, decaying with some rate as $t\to\infty$. Concerning the degenerate case $p>2$, the situation is very different: indeed, on the one hand, the support of compactly supported solutions advances in time with finite speed and interfaces appear, as a classical effect of the slow diffusion, see \cite{BtL08, Shi04b, NT11}. But on the other hand, it is proved that, if $q\in (1,p-1]$, then the dynamics of Eq.~\eqref{eq1} is fully governed by the absorption term \cite{ILV, LV07}, giving rise to asymptotic profiles with features such as shape and regularity specific to a Hamilton-Jacobi equation instead of a nonlinear diffusion one.

The range $p\in (1,2)$, where the diffusion is no longer degenerate, but becomes singular when $\nabla u$ vanishes, is a very interesting one and has been considered by the authors in a number of works during the last decade. The starting point of this research in the fast diffusion range stems from \cite{IL12}, in which the well-posedness of the Cauchy problem in the sense of viscosity solutions, together with a comparison principle in the spirit of \cite{OS97} and optimal gradient estimates of solutions to Eq.~\eqref{eq1} are established. With the help of these gradient estimates, the ranges of algebraic decay as $t\to\infty$, exponential decay as $t\to\infty$ and finite time extinction are also identified in \cite{IL12}. Restricting ourselves to the supercritical fast diffusion range,
\begin{equation*}
p_c=\frac{2N}{N+1}<p<2,
\end{equation*}
three critical values of the exponent $q$ are uncovered in this analysis, namely:

$\bullet$ $q=q_*:=p-N/(N+1)$ is a critical exponent separating, in the large time behavior, the range $q>q_*$ where the diffusion term rules over the dynamics, and the range $p/2<q<q_*$ where a balance between the singular diffusion and the gradient absorption is achieved, leading to \emph{very singular solutions} in self-similar form, with characteristics inherited from the two terms competing in Eq.~\eqref{eq1}, as asymptotic profiles. The latter is established in \cite{IL14}, following the existence, uniqueness and classification of very singular self-similar solutions obtained by the authors in \cite{IL13b}, see also \cite{Shi04} for the range $q\in (1,q_*)$.

$\bullet$ $q=p/2$ is a critical exponent separating the range $q>p/2$, where solutions are positive in $\real^N$ and present an algebraic time decay as $t\to\infty$, and the range $0<q<p/2$ where finite time extinction takes place (at least for initial conditions rapidly decaying at infinity). This critical exponent is \emph{specific to the fast diffusion range}, since it plays no role at all for $p>2$, and it has been identified in \cite{IL12}. Moreover, the analysis performed in \cite{IL13a} for precisely the critical case $q=p/2$ led to a classification of \emph{eternal} self-similar solutions in exponential form, with a very thin difference, of logarithmic scale, between the fast decay and the slow decay as $|x|\to\infty$ of the self-similar profiles.

$\bullet$ $q=p-1$ is a critical exponent separating two different mechanisms of finite time extinction. Indeed, as shown in \cite{ILS17}, in the range $q\in(0,p-1)$, a rather striking phenomenon, known as \emph{instantaneous shrinking of supports}, takes place genuinely; that is, if $u_0$ is an initial condition with sufficiently fast decay as $|x|\to\infty$ (see \cite{ILS17} for the precise statements), then the solution $u$ to Eq.~\eqref{eq1} with initial condition $u_0$ becomes immediately compactly supported for any $t>0$, and then the support shrinks as $t>0$ increases, leading to a \emph{single point extinction} at $t=T_e$. On the contrary, it is expected that, for $q\in(p-1,p/2)$, the solutions stay positive for any $(t,x)\in(0,T_e)\times\real^N$ and that finite time extinction occurs \emph{simultaneously} as $t\to T_e$. In particular, the critical case $q=p-1$ is studied in \cite{IL17, IL18} and such simultaneous extinction is  proved in this case, along with a precise description of the self-similar behavior at the extinction time. The proof takes advantage of an underlying variational structure which is only available for the specific choice $q=p-1$ in a radially symmetric setting.

We thus notice that there is still a gap remaining in the previous classification, which is related exactly to the range \eqref{exp.eq1}. The authors considered this range in their short note \cite{IL18b} and identified both the optimal tail of the initial data $u_0$ for finite time extinction to take place; that is, there is $C_0>0$ such that
\begin{equation}\label{tail}
u_0(x)\leq C_0 (1+|x|)^{-q/(1-q)}, \qquad x\in\real^N,
\end{equation}
and the extinction rate in the case when a more restrictive decay as $|x|\to\infty$ than \eqref{tail} is fulfilled. More precisely, \cite[Theorem~1.2]{IL18b} establishes that, if $u_0$ is an initial condition decaying as
\begin{equation}\label{tail.fast}
u_0(x)\leq K_0|x|^{-(p-q)/(q-p+1)}, \qquad x\in\real^N,
\end{equation}
for some $K_0>0$, then the solution $u$ to Eq.~\eqref{eq1} with initial condition $u_0$ vanishes in finite time $T_e\in(0,\infty)$ and there are positive constants $0<c_1\le C_1$ and $0<c_\infty\le C_\infty$ such that
\begin{equation}\label{ext.rate}
c_{\infty}(T_e-t)^{\alpha}\leq\|u(t)\|_{\infty}\leq C_{\infty}(T_e-t)^{\alpha}, \qquad t\in(0,T_e),
\end{equation}
and
\begin{equation}\label{ext.rate.L1}
c_1(T_e-t)^{\alpha-N\beta}\leq\|u(t)\|_{1}\leq C_1(T_e-t)^{\alpha-N\beta}, \qquad t\in(0,T_e),
\end{equation}
with
\begin{equation}\label{SSexp}
	\alpha := \frac{p-q}{p-2q} >0, \qquad \beta := \frac{q-p+1}{p-2q}>0.
\end{equation}
Besides, the following optimal gradient estimate 
\begin{equation}\label{grad.est}
\left|\nabla u^{-(q-p+1)/(p-q)}(t,x)\right|\leq K_1\left[ 1+\|u_0\|_{\infty}^{(p-2q)/p(p-q)} t^{-1/p} \right],
\end{equation}
is established in \cite[Theorem~1.3~(iii)]{IL12}:\footnote{The integrability of $u_0$ is also assumed in the statement of \cite[Theorem~1.3~(iii)]{IL12} but is actually not needed for its validity, as the gradient estimate only involves the $L^\infty$-norm of $u_0$} for solutions to Eq. \eqref{eq1} with bounded and continuous initial conditions and it holds true in the positivity set of $u$; that is, for
$$
(t,x)\in\mathcal{P}(u):=\{(t,x)\in (0,T_e)\times\real^N: u(t,x)>0\},
$$
for some constant $K_1>0$ depending only on $p$ and $q$. The extinction rates~\eqref{ext.rate} and~\eqref{ext.rate.L1} strongly suggest that the behavior near extinction in the range~\eqref{exp.eq1} of exponents $(p,q)$ should be a self-similar one. The aim of this paper is then to prove the existence of self-similar solutions, along with some properties of their profiles. We are thus in a position to state our main results.

\medskip

\noindent \textbf{Main results}. Let us consider Eq.~\eqref{eq1} in the range~\eqref{exp.eq1} of exponents $(p,q)$. We look for radially symmetric self-similar solutions to Eq.~\eqref{eq1} presenting finite time extinction, in the form
\begin{equation}\label{SSS}
u(t,x)=(T-t)^{\alpha}f(|x|(T-t)^{\beta}),
\end{equation}
where the self-similar exponents are given by \eqref{SSexp}. Introducing the ansatz~\eqref{SSS} into Eq.~\eqref{eq1} gives the ordinary differential equation solved by the self-similar profile $f$ of a solution in the form~\eqref{SSS}, namely
\begin{equation}\label{SSODE}
(|f'|^{p-2}f')'(r)+\frac{N-1}{r}(|f'|^{p-2}f')(r)+\alpha f(r)+\beta rf'(r)-|f'(r)|^q=0,
\end{equation}
with independent variable $r=|x|(T-t)^{\beta}\ge 0$. In addition, since we expect the self-similar solution $u$ to be smooth, we impose the condition $f'(0)=0$. A formal analysis of~\eqref{SSODE}, by letting
$$
f(r)\sim Cr^{-\theta} \qquad {\rm as} \ r\to\infty,
$$
reveals that positive solutions to \eqref{SSODE} may only have the following two behaviors as $r\to\infty$:
\begin{equation}\label{beh.infty}
f(r)\sim Cr^{-q/(1-q)} \qquad {\rm or} \qquad f(r)\sim Cr^{-(p-q)/(q-p+1)}.
\end{equation}
Noticing that
$$
\frac{p-q}{q-p+1}-\frac{q}{1-q}=\frac{p-2q}{(q-p+1)(1-q)}>0
$$
in the range~\eqref{exp.eq1} of exponents $(p,q)$, we deduce that the fastest decay is the second one in~\eqref{beh.infty} and we will be thus looking for solutions to~\eqref{SSODE} enjoying this decay property. In order to simplify the rest of the exposition, we introduce the following two constants
\begin{equation}\label{const}
\mu:=\frac{p-q}{q-p+1}=\frac{\alpha}{\beta}>N, \qquad K^*:=\frac{1}{\mu}\left[(p-1)(\mu+1)-N+1\right]^{1/(q-p+1)}.
\end{equation}
We specialize now to dimension $N=1$ and we state our first result, which deals with existence and uniqueness of a radially symmetric self-similar solution with fast decay as $r\to\infty$.

\begin{theorem}[Existence and uniqueness, $N=1$]\label{th.dim1}
Let $N=1$. There exists a unique $a^*\in(0,\infty)$ such that the solution $f^*$ to~\eqref{SSODE} with initial conditions $f^*(0)=a^*$ and $(f^*)'(0)=0$ is positive on $(0,\infty)$ and enjoys the fast decay
\begin{equation}\label{fast.decay}
f^*(r)\sim K^*r^{-\mu} \qquad {\rm as} \ r\to\infty,
\end{equation}
where the constants $\mu$ and $K^*$ are defined in~\eqref{const}.
\end{theorem}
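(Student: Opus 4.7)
The strategy is a shooting argument in the initial height $a := f(0) > 0$, taking advantage of $N=1$ to suppress the singular damping $(N-1)/r$ in~\eqref{SSODE}. For each $a>0$ I denote by $f_a$ the local solution of~\eqref{SSODE} with $f_a(0)=a$ and $f_a'(0)=0$; its existence and uniqueness near $r=0$ follow from a fixed-point argument that handles the singularity of the $p$-Laplacian at the critical point $f'=0$, along the lines of \cite{IL13b}. Let $R_a \in (0,\infty]$ be the supremum of $r>0$ such that $f_a$ is defined and positive on $[0,r)$. Evaluating~\eqref{SSODE} at $r=0$ gives $(|f_a'|^{p-2}f_a')'(0) = -\alpha a < 0$, which together with uniqueness and standard monotonicity yields $f_a'<0$ on $(0,R_a)$, so $f_a$ is strictly decreasing on its positivity interval.

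By the formal tail analysis preceding~\eqref{beh.infty}, any globally positive $f_a$ must decay either slowly (as $r^{-q/(1-q)}$) or quickly (as $K^* r^{-\mu}$), hence I partition $(0,\infty)$ as
\begin{align*}
A &:= \{a>0 : R_a < \infty\},\\
B &:= \{a>0 : R_a = \infty, \ f_a \text{ has slow decay}\},\\
C &:= \{a>0 : R_a = \infty, \ f_a \text{ has fast decay}\}.
\end{align*}
Non-emptiness of $A$ for large $a$ would follow by comparison with a suitable supersolution (or a Pohozaev-type identity) forcing $f_a$ to vanish in finite $r$; non-emptiness of $B$ for small $a$ would follow by constructing an explicit slowly-decaying supersolution, for instance of the form $\varepsilon(1+r^2)^{-q/(2(1-q))}$, and invoking comparison. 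Openness of $A$ is immediate from continuous dependence on $a$ and transversality of $f_a$ at its first zero; openness of $B$ is obtained after the rescaling $s=\log r$, $g(s)=r^{q/(1-q)}f_a(r)$, which reformulates the tail behavior as an autonomous planar system in which the slow-decay tail is a hyperbolic stable equilibrium and thus attracts an open set of orbits. By connectedness of $(0,\infty)$, the complement $C=(0,\infty)\setminus(A\cup B)$ is non-empty, yielding some $a^*\in C$ and hence a profile with the fast-decay tail~\eqref{fast.decay}.

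Uniqueness is where I expect the main obstacle. I would establish a \emph{non-crossing} property for the one-parameter family: if $a_1<a_2$, then the graphs of $f_{a_1}$ and $f_{a_2}$ do not meet on any interval where both are positive. Assuming a first meeting point $r_0>0$, necessarily $f_{a_1}(r_0)=f_{a_2}(r_0)$ and $f_{a_1}'(r_0)\geq f_{a_2}'(r_0)$, and a contradiction would be obtained from a Picone-type identity adapted to~\eqref{SSODE} integrated on $(0,r_0)$; the absence of the $r^{N-1}$-weight ensured by $N=1$ is essential for the divergence structure needed in this identity. The non-crossing property then forces $A=(a^*,\infty)$, $B=(0,a^*)$ and $C=\{a^*\}$, yielding uniqueness. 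The delicate point is that, in contrast to the borderline case $q=p-1$ analyzed in \cite{IL17} where a genuine variational structure is available in the radial setting, no such structure persists in the open range~\eqref{exp.eq1}, so the Picone-type identity must be engineered directly from~\eqref{SSODE}; I expect this to be the bulk of the technical effort.
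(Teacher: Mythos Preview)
Your existence sketch is close in spirit to the paper's shooting argument, though the paper organizes the trichotomy through the auxiliary function $w(r;a)=r^\mu f(r;a)$ rather than through the slow/fast tail alternative directly; this yields clean characterizations ($\sup w<K^*$, $\sup w>K^*$, $w\nearrow K^*$) that make openness of the extremal sets immediate. Your plan for openness of the slow-decay set via a hyperbolic equilibrium in logarithmic variables is plausible but not what is done.

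The uniqueness proposal, however, has a genuine gap. The global non-crossing property you state---that for \emph{every} $a_1<a_2$ the profiles $f_{a_1}$ and $f_{a_2}$ never meet on their joint positivity set---is false. Indeed, you have already argued that $A$ (compact support) contains all large $a$ and that $B$ (slow decay, globally positive) contains all small $a$. Pick $a_1\in B$ and $a_2\in A$ with $a_1<a_2$. Then $f_{a_1}(0)=a_1<a_2=f_{a_2}(0)$, yet $f_{a_2}(r)\to 0$ as $r\to R_{a_2}^-$ while $f_{a_1}(R_{a_2})>0$, so the graphs must cross on $(0,R_{a_2})$. In particular no Picone-type identity can deliver universal non-crossing, and your conclusion that non-crossing forces $A=(a^*,\infty)$ is actually inconsistent with the ordering $f_{a_1}<f_{a_2}$ (which would make $A$ a \emph{lower} interval).

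The paper's route is quite different. Monotonicity is proved \emph{only} for pairs $a_1<a_2$ both in the fast-decay set, via a spatial-shift argument combined with the comparison principle for the full PDE~\eqref{eq1} (adapted from \cite{FGV}); this is where $N=1$ enters, since the shifted profile $r\mapsto f_2(r-\rho)$ remains a solution of the ODE only without the $(N-1)/r$ term. But monotonicity alone is still not enough: two ordered fast-decay profiles share the leading term $K^*r^{-\mu}$, so no contradiction appears at first order. The paper therefore develops a three-dimensional autonomous dynamical system from~\eqref{SSODE}, analyzes the stable manifold of the relevant critical point, and extracts the \emph{second}-order coefficient in the expansion $f(r;a)=K^*r^{-\mu}(1-C(a)r^{-1}+o(r^{-1}))$, proving that $C(a_1)\ne C(a_2)$ whenever $a_1\ne a_2$. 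Only then does a time-rescaling comparison $(1-\tau)^\alpha f_2(r(1-\tau)^\beta)$ yield the final contradiction. Your proposal is missing both the second-order asymptotic analysis and the correct mechanism for monotonicity.
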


We are able to overcome the (usually) very difficult problem of  uniqueness of the self-similar profile by a technique relying on a fine analysis of an auxiliary dynamical system, together with a shifting in space in self-similar variables adapted from \cite{FGV}. The outcome of the former is the identification of the second term in the asymptotic expansion of $f^*(r)$ as $r\to\infty$, see Theorem~\ref{th.dimN} below. The combination of both arguments allows us to prove a monotonicity result among global self-similar solutions with the desired behavior~\eqref{tail.fast}. Unfortunately, it is no longer possible to employ part of this technique in dimension $N\geq2$, in particular the shifting method used in Section~\ref{sec.uniq} in order to prove the uniqueness of the self-similar profile $f^*$ in dimension $N=1$. Still, existence and a fine analysis of the tail of the self-similar profiles are available in general dimensions, as follows.

\begin{theorem}[Existence and tail description, $N\geq 1$]\label{th.dimN}
Let $N\geq 1$. Then there is a closed subset $\mathcal{B}$ of $(0,\infty)$ such that, for any $a\in\mathcal{B}$, the solution $f(\cdot;a)$ to~\eqref{SSODE} with initial conditions $f(0;a)=a$ and $f'(0;a)=0$ is positive on $(0,\infty)$ and satisfies
\begin{equation}\label{tail.SSS}
f(r;a)\sim r^{-\mu}(K^*-Ar^{-\theta}) \;\; {\rm as} \ r\to\infty \;\; \text{ with }\;\; \theta:=\frac{N(p-1)-q(N-1)}{p-1},
\end{equation}
for some $A>0$. In addition, $\mathcal{B}=\{a^*\}$  when $N=1$, with $a^*$ defined in Theorem~\ref{th.dim1}.
\end{theorem}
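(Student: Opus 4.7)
The plan is to combine a shooting argument in the parameter $a$ with a phase-plane analysis of an auxiliary asymptotically autonomous system derived from~\eqref{SSODE}. Standard ODE theory, adapted to the singularity of $(|f'|^{p-2}f')'$ at $f'=0$ as in the authors' previous works, provides, for each $a>0$, a unique solution $f(\cdot;a)$ with $f(0;a)=a$ and $f'(0;a)=0$, depending continuously on $a$ and defined on a maximal interval $[0,R(a))$ on which $f(\cdot;a)>0$. I partition $(0,\infty)$ as
\begin{equation*}
\mathcal{A}_-:=\{a:R(a)<\infty\},\ \mathcal{A}_+:=\{a:R(a)=\infty,\ f(\cdot;a)\text{ has the slow decay }r^{-q/(1-q)}\},
\end{equation*}
and $\mathcal{B}:=(0,\infty)\setminus(\mathcal{A}_-\cup\mathcal{A}_+)$.

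The key reformulation is a rescaling: setting $s:=\log r$, $F(s):=r^{\mu}f(r)$, and $V(s):=-r^{(p-1)(\mu+1)}|f'|^{p-2}f'(r)$ transforms~\eqref{SSODE} into the planar asymptotically autonomous system
\begin{equation*}
\dot F=\mu F-V^{1/(p-1)},\quad \dot V=\bigl[(p-1)(\mu+1)-(N-1)\bigr]V-V^{q/(p-1)}+r^{-\delta}\bigl[\alpha F-\beta V^{1/(p-1)}\bigr],
\end{equation*}
with $\delta:=(p-2q)/(q-p+1)>0$. The fast decay~\eqref{fast.decay} corresponds to the fixed point $P^*:=(K^*,(\mu K^*)^{p-1})$ of the limit ($r\to\infty$) autonomous system. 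Its Jacobian is upper triangular, with eigenvalues $\mu>0$ and $\lambda:=(\mu K^*)^{q-p+1}(p-1-q)/(p-1)<0$ (using $q>p-1$), so $P^*$ is a hyperbolic saddle. A direct algebraic manipulation using $(\mu+1)(q-p+1)=1$ together with the definition of $K^*$ identifies $-\lambda$ with the exponent $\theta$ in~\eqref{tail.SSS}. A stable-manifold argument adapted to the asymptotically autonomous structure (observing that the perturbation $r^{-\delta}[\alpha F-\beta V^{1/(p-1)}]$ vanishes at $P^*$ because $\alpha-\beta\mu=0$) yields a one-dimensional invariant set $\mathcal{W}^s$ of trajectories converging to $P^*$ at rate $e^{\lambda s}=r^{-\theta}$, which translates into~\eqref{tail.SSS} up to the sign of $A$.

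Continuous dependence on $a$ implies that $\mathcal{A}_-$ is open; openness of $\mathcal{A}_+$ follows from the fact that the slow decay $r^{-q/(1-q)}$ corresponds to a second critical point of the limit system which is forward-attracting on a suitable region, so that this behavior persists under small perturbations of $a$. Non-emptiness of $\mathcal{A}_-$ for large $a$ is obtained by comparison with supersolutions based on the Hamilton-Jacobi-like absorption mechanism (which forces large data to vanish at a finite radius), while $\mathcal{A}_+$ is non-empty for small $a$ by means of explicit slowly decaying supersolutions of~\eqref{SSODE}. Hence $\mathcal{B}$ is a non-empty closed subset of $(0,\infty)$, and for $a\in\mathcal{B}$, the other two possibilities being excluded, the trajectory of $(F,V)$ must lie on $\mathcal{W}^s$.

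The principal obstacle is to establish $A>0$, i.e., to identify the correct branch of $\mathcal{W}^s$ (the two branches correspond to the two signs of $A$). I would expect to settle this by exploiting the sign of the non-autonomous perturbation $r^{-\delta}[\alpha F-\beta V^{1/(p-1)}]$ along each branch, which breaks the symmetry between them, combined with the positivity of $f(\cdot;a)$ on $(0,\infty)$ and the boundary conditions at $r=0$; an alternative is a barrier/comparison argument showing that no global positive profile can satisfy $f(r)\ge K^*r^{-\mu}$ for all large $r$. The final assertion $\mathcal{B}=\{a^*\}$ for $N=1$ is then an immediate consequence of Theorem~\ref{th.dim1}.
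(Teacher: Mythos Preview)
Your overall architecture---shooting in $a$ plus a dynamical-systems analysis near the equilibrium corresponding to the fast decay---matches the paper's strategy. The main structural difference is that you work with a \emph{two-dimensional asymptotically autonomous} system in $(F,V)$ with a perturbation of order $r^{-\delta}$, $\delta=(p-2q)/(q-p+1)$, whereas the paper deliberately introduces a \emph{third} variable $Z$ (essentially absorbing your $r^{-\delta}$) to obtain a fully autonomous quadratic system in $\real^3$; the fast-decay profiles are then mapped to trajectories on the two-dimensional stable manifold of the critical point $P_0=(0,0,Z_*)$, whose two negative eigenvalues are precisely $\lambda_2=-\delta$ and $\lambda_3=-\theta$.

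This difference matters for the second-order expansion. The paper observes that $\lambda_2$ and $\lambda_3$ are \emph{not} ordered uniformly in $(p,q)$: there is $q_*\in(p-1,p/2)$ with $\lambda_2=\lambda_3$ at $q=q_*$ and the order flips across it. In your formulation this means the non-autonomous perturbation can decay \emph{slower} than the stable eigenvalue ($\delta<\theta$), so a naive asymptotically-autonomous stable-manifold theorem would deliver convergence at rate $r^{-\delta}$, not the claimed $r^{-\theta}$. Your remark that the perturbation $r^{-\delta}[\alpha F-\beta V^{1/(p-1)}]$ vanishes at $P^*$ is exactly the right observation, but turning it into the sharp rate $r^{-\theta}$ requires a bootstrap you do not carry out. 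The paper resolves this by working on the autonomous $3$D stable manifold, parametrized as a graph $h(u,v)$, and proving (Lemma~3.5) that \emph{all} pure derivatives $\partial_u^n h(0,0)$ and $\partial_v^n h(0,0)$ vanish; this is what prevents the $\lambda_2$-mode from polluting the $\lambda_3$-expansion.

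The second and more serious gap is $A>0$. You correctly flag this as the principal obstacle, but both suggestions you offer fall short. The monotone increase of $w(\cdot;a)=r^\mu f(\cdot;a)$ toward $K^*$ (which is what the paper actually proves for $a\in\mathcal{B}$) yields only $A\ge 0$; your proposed barrier ruling out $f(r)\ge K^*r^{-\mu}$ does the same. Excluding $A=0$ is a genuinely different statement: one must show that the trajectory cannot approach $P^*$ \emph{strictly faster} than $r^{-\theta}$. The paper does this via a $C^1$-linearization (Hartman's theorem) to prove that the map $\mbx_0\mapsto(U_\infty(\mbx_0),V_\infty(\mbx_0))$ is injective on the local stable manifold (Proposition~3.9), then exhibits explicit trajectories $(\varrho\beta,\varrho\alpha,0)e^{\lambda_2\eta}$ with $V_\infty=0$; since these correspond to $f\equiv K^* r^{-\mu}$, which is singular at $r=0$, injectivity forces $V_{\infty,a}\ne 0$, i.e.\ $A_a>0$. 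Neither the sign of the perturbation along the branches of $\mathcal{W}^s$ nor a comparison argument seems to give this without a comparable rigidity statement.

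A minor point: your non-emptiness arguments for $\mathcal{A}_\pm$ are sketched as comparison with super/subsolutions, whereas the paper uses a scaling limit (to the equation without absorption) for large $a$ and a direct lower bound from~\eqref{decr} for small $a$; these are more concrete and avoid constructing barriers.
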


Notice that $\theta\in(0,1)$ for $N\geq2$, taking into account the range~\eqref{exp.eq1} of exponents $(p,q)$. We thus identify in~\eqref{tail.SSS} a precise decay rate of the profiles up to the second order as $r\to\infty$, which is an essential tool in the proof of the uniqueness of the profile $f^*$ in Theorem~\ref{th.dim1} when $N=1$. We conjecture that the uniqueness of the self-similar profile with decay~\eqref{tail.SSS} also holds true in any dimension $N\geq2$, but some different ideas are to be found for its proof.

The proof of the existence of at least one positive solution $f$ to \eqref{SSODE} which decays as $K^*r^{-\mu}$ as $r\to\infty$ relies on a rather classical shooting method. In contrast, the identification of the second term in the asymptotic expansion as $r\to\infty$ is more involved and it seems to us that the approach we develop here is the most original part of the paper. Specifically, we first transform \eqref{SSODE} into an autonomous quadratic three dimensional system. While such transformations have already been employed and proved useful to study self-similar solutions to the porous medium equation with or without absorption or source terms, see \cite{J53, Hu91, AG95, ISV08, ILS24, ILS24b}, as far as we know it is the first time that it is used for a quasilinear diffusion equation involving a $p$-Laplacian and a gradient term. Once this transformation is performed, establishing \eqref{tail.SSS} amounts to study precisely the behavior of the trajectories of this dynamical system lying on the two-dimensional stable manifold of a specific critical point of it. This analysis requires in particular a rather precise description of the stable manifold. 

\medskip

\noindent \textbf{Organization of the paper}. For the proofs of the main results, we employ a \emph{variety of techniques}, as described now. The existence of self-similar solutions is established in Section~\ref{sec.exist} by a shooting method. In the subsequent Section~\ref{sec.dynamic}, we introduce a transformation mapping the differential equation~\eqref{SSODE} into a three dimensional autonomous dynamical system. A deeper analysis of a specific critical point of this system is performed in order to establish the local behavior~\eqref{tail.SSS} as $r\to\infty$, which completes the proof of Theorem~\ref{th.dimN}. This behavior has independent interest in the analysis of the solutions, but, restricting to dimension $N=1$, it becomes also a fundamental step in the quest for the uniqueness as stated in Theorem~\ref{th.dim1}. Indeed, monotonicity of self-similar solution will be proved in Section~\ref{sec.uniq} using a shifting technique at the level of self-similar profiles, together with a clever use of the comparison principle. Uniqueness then follows from a separation between self-similar solutions stemming from the already established precise behavior at the second order.

\section{Existence of self-similar profiles in dimension $N\geq1$}\label{sec.exist}

This section is devoted to the shooting method leading to the proof of the existence of self-similar solutions with fast decay, which borrows ideas from \cite{IL13b}. For any $a\in(0,\infty)$, we consider the Cauchy problem for the equation~\eqref{SSODE} with initial conditions $f(0)=a$ and $f'(0)=0$. Introducing
$$
F(r):=-(|f'|^{p-2}f')(r),
$$
the problem can be written as
\begin{equation}\label{CP}
\left\{\begin{array}{l}
	f'(r)=-|F(r)|^{(2-p)/(p-1)}F(r),\\
	\\
	F'(r)+\displaystyle{\frac{N-1}{r}} F(r)=\alpha f(r)-\beta r|F(r)|^{(2-p)/(p-1)}F(r)-|F(r)|^{q/(p-1)},\\
	\\
	f(0)=a, \ F(0)=0.
\end{array}\right.
\end{equation}
Since $q>p-1$ and $(2-p)/(p-1)>0$, the right-hand side of \eqref{CP} is locally Lipschitz continuous. There is thus a unique maximal $C^1$-smooth solution $(f(\cdot;a),F(\cdot;a))$ to the system \eqref{CP}, defined on an interval $[0,R_{\max}(a))$ and such that
\begin{equation*}
F'(0;a)=\frac{\alpha a}{N}>0.
\end{equation*}
Moreover, either $R_{\max}(a)=\infty$, or
\begin{equation*}
	R_{\max}(a) < \infty \;\;\text{ and }\;\; \lim_{r\to R_{\max}(a)} \big( |f(r;a)| + |F(r;a)| \big) = \infty.
\end{equation*}
Setting
$$
R(a):=\inf\{r\geq0: f(r;a)=0\}\leq R_{\max}(a),
$$
the positivity of $a$ and the continuity of $f(\cdot;a)$ ensure that $R(a)>0$. Throughout this section, we omit the parameter $a$ in the notation where there is no danger of confusion.

We gather in the following statements a few general properties of $f(\cdot;a)$.

\begin{lemma}\label{lem.1}
Let $a>0$. We have
\begin{subequations}\label{elem}
\begin{equation}\label{decr}
	-(a\alpha)^{1/q}\leq f'(r;a)<0, \qquad r\in(0,R(a)).
\end{equation}
If moreover $R(a)=\infty$, then
\begin{equation}
	\lim\limits_{r\to\infty}f(r;a)=\lim\limits_{r\to\infty}f'(r;a)=0, \label{elem1}
\end{equation}
and there is $\kappa_0>0$ depending only on $p$ and $q$ such that
\begin{equation}
	\left| \left( f^{-1/\mu} \right)'(r;a) \right| \le \kappa_0 \left[ 1 + \|f\|_\infty^{1/(\alpha p)} \right], \qquad r>0, \label{elem2}
\end{equation}
\end{subequations}
\end{lemma}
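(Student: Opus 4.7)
The plan is to prove the three assertions in order, each by a different technique.

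For~\eqref{decr}, I would first show that $F := -|f'|^{p-2}f'$ stays positive on $(0, R(a))$, whence $f' < 0$ there. Since $F(0) = 0$ with $F'(0) = \alpha a/N > 0$, $F$ is positive near $0$; at a hypothetical first zero $r_0 > 0$, the system~\eqref{CP} collapses to $F'(r_0) = \alpha f(r_0) > 0$, contradicting the necessary sign $F'(r_0) \le 0$. For the upper bound, I would run the same kind of argument on $M := F^{q/(p-1)} = |f'|^q$: if $r_1 > 0$ were a first point with $M(r_1) = a\alpha$, then $M'(r_1) \ge 0$ and hence $F'(r_1) \ge 0$; however, substituting $F^{1/(p-1)}(r_1) = (a\alpha)^{1/q}$ into the ODE yields $F'(r_1) \le \alpha f(r_1) - a\alpha$, and strict monotonicity of $f$ on $[0, r_1]$ gives $f(r_1) < a$, a contradiction.

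For~\eqref{elem1}, the main step is an energy-type monotonicity. Multiplying the ODE by $f'$ and using $(|f'|^{p-2}f')'\,f' = \frac{p-1}{p}(|f'|^p)'$ together with $-|f'|^q f' = |f'|^{q+1}$, the quantity
\[
E(r) := \frac{p-1}{p}|f'(r)|^p + \frac{\alpha}{2}f(r)^2
\]
is seen to satisfy $E'(r) = -\frac{N-1}{r}|f'|^p - \beta r(f')^2 - |f'|^{q+1} \le 0$. Since $f$ is decreasing and bounded by $a$, $f \to L \ge 0$; monotonicity of $E$ and the limit $f\to L$ yield $|f'| \to M \ge 0$, and the boundedness of $f$ forces $M = 0$. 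To show $L = 0$, I would multiply the ODE by $r^{N-1}$ and integrate over $[0, R]$, using integration by parts on the term $\beta \int_0^R r^N f'\,dr$, to obtain
\[
\beta(\mu - N)\int_0^R r^{N-1} f\,dr + \beta R^N f(R) = R^{N-1}|f'(R)|^{p-1} + \int_0^R r^{N-1}|f'|^q\,dr.
\]
If $L > 0$, the left-hand side grows like $\alpha L R^N / N$ (using $\mu > N$), while the right-hand side is $o(R^N)$: the boundary term by~\eqref{decr}, and the integral because $|f'|^q\to 0$ (via a standard splitting at a cutoff $r_0$). The contradiction forces $L = 0$.

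For~\eqref{elem2}, I would exploit scale invariance. Since $R(a) = \infty$ and $f > 0$, the function $u(t, x) := (T-t)^\alpha f(|x|(T-t)^\beta)$ is a positive viscosity solution of~\eqref{eq1} on $(0, T) \times \RR^N$ for any $T > 0$, within the framework of~\cite{IL12}. The self-similar structure yields $|\nabla u^{-1/\mu}(t, x)| = |(f^{-1/\mu})'(|x|(T-t)^\beta)|$, so the gradient estimate~\eqref{grad.est} from~\cite{IL12}, applied with $\|u(0, \cdot)\|_\infty = T^\alpha \|f\|_\infty$ and evaluated at $t = T/2$ (so that $t^{-1/p}$ and the $T^{1/p}$ factor coming from the initial datum cancel), gives the desired bound with a constant $\kappa_0$ depending only on $p$ and $q$. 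Letting $|x|$ range over $[0, \infty)$ then recovers the inequality at every $r > 0$.

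The main obstacle is~\eqref{elem1}: combining the energy monotonicity with the integral identity and justifying the $o(R^N)$ asymptotics of $\int_0^R r^{N-1}|f'|^q\,dr$ requires some care, even though the individual ingredients are fairly standard. In contrast, the application of~\eqref{grad.est} in~\eqref{elem2} is essentially routine once the viscosity-solution framework of~\cite{IL12} is invoked.
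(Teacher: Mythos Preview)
Your proposal is correct and matches the paper's approach. The paper delegates the proofs of~\eqref{decr} and~\eqref{elem1} to \cite[Lemma~2.1]{IL13b}, where the arguments are precisely the ones you outline (first-zero analysis for $F$ and for $|f'|^q$, plus the energy $E=\frac{p-1}{p}|f'|^p+\frac{\alpha}{2}f^2$); for~\eqref{elem2} the paper likewise applies the gradient estimate~\eqref{grad.est} to the self-similar solution $u(t,x)=(1-t)_+^\alpha f(|x|(1-t)_+^\beta)$, the only cosmetic difference being that the paper fixes $T=1$ and lets $t\to 1$ rather than evaluating at $t=T/2$.
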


\begin{proof}
	The proof of~\eqref{decr} and~\eqref{elem1} is exactly the same as that of \cite[Lemma~2.1]{IL13b}, to which we refer. As for~\eqref{elem2}, it follows from~\eqref{SSODE} and~\eqref{elem1} that $f\in W^{1,\infty}([0,\infty))$, while the positivity of $f$ and~\eqref{SSODE} imply that
	\begin{equation*}
		u(t,x) = (1-t)_+^\alpha f\big(|x| (1-t)_+^\beta\big), \qquad (t,x)\in [0,\infty) \times \real^N,
	\end{equation*}
	solves Eq.~\eqref{eq1} with a positive initial condition $u(0)\in W^{1,\infty}(\real^N)$. It then follows from~\eqref{grad.est} that
	\begin{equation*}
		(1-t)^{\beta-\alpha(q-p+1)/(p-q)}\left|\left(f^{-(q-p+1)/(p-q)}\right)'(r)\right|\leq K_1\left[1+\frac{\|f\|_\infty^{1/(\alpha p)}}{t^{1/p}}\right].
	\end{equation*}
	Since
	$$
	\beta-\frac{\alpha(q-p+1)}{p-q}=0,
	$$
	we obtain~\eqref{elem2} with $\kappa_0=K_1$ by letting $t\to 1$ in the previous estimate.
\end{proof}

We next introduce the following energy (which is actually used in the proof of \cite[Lemma~2.1]{IL13b})
\begin{equation}\label{energy}
E(r;a)=\frac{p-1}{p}|f'(r;a)|^p+\frac{\alpha}{2}|f(r;a)|^2, \qquad r\in[0,R_{\max}(a)).
\end{equation}
With the aid of this energy, we show that the solutions to the system~\eqref{CP} cannot have a blow-up at a finite value of $r$.

\begin{lemma}\label{lem.2}
Let $a\in(0,\infty)$. Then $R_{\max}(a)=\infty$.
\end{lemma}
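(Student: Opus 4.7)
The plan is to rule out finite-$r$ blow-up of the maximal solution to~\eqref{CP} by showing that the energy $E(\cdot;a)$ defined in~\eqref{energy} stays bounded on every bounded subinterval of $[0,R_{\max}(a))$. My first step is to differentiate $E$ along~\eqref{SSODE}: multiplying~\eqref{SSODE} by $f'(r;a)$, using the identity $(|f'|^{p-2}f')'\,f' = \bigl(\tfrac{p-1}{p}|f'|^p\bigr)'$, and observing that the term $\alpha f f'$ produced by $E'$ cancels the one coming from multiplying $\alpha f$ in~\eqref{SSODE} by $f'$, I would arrive at the dissipation identity
\begin{equation*}
	E'(r;a) = -\frac{N-1}{r}|f'(r;a)|^p - \beta r |f'(r;a)|^2 + f'(r;a)|f'(r;a)|^q, \qquad r\in(0,R_{\max}(a)).
\end{equation*}
The first two contributions on the right-hand side are non-positive, so only the absorption term $f'|f'|^q$ can feed $E$.

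The crucial structural observation is that in the range~\eqref{exp.eq1} one has $q<p/2<1$, hence $q+1<2$. The absorption contribution is therefore \emph{subquadratic} in $|f'|$, and for $r$ bounded away from zero it can be absorbed by the dissipative term $-\beta r(f')^2$. Concretely, for any $r_0\in(0,R_{\max}(a))$ Young's inequality provides a constant $C(r_0)>0$ such that $|f'|^{q+1}\le \beta r_0 |f'|^2 + C(r_0)$, and together with the previous identity this yields
\begin{equation*}
	E'(r;a) \le C(r_0), \qquad r\in[r_0,R_{\max}(a)).
\end{equation*}

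To conclude, I would argue by contradiction: assuming $R_{\max}(a)=R_0<\infty$, I pick $r_0\in(0,R_0)$ and integrate the previous inequality to get $E(r;a) \le E(r_0;a) + C(r_0)(R_0-r_0)$ on $[r_0,R_0)$. This bounds $|f(r;a)|$ and $|f'(r;a)|$ on $[r_0,R_0)$; since $|F(r;a)|=|f'(r;a)|^{p-1}$, the blow-up alternative recalled just before Lemma~\ref{lem.1} is violated, and $R_{\max}(a)=\infty$. The main obstacle in the argument is really spotting the right interplay between exponents: the source $f'|f'|^q$ is \emph{super}linear compared with the dissipative term $-\frac{N-1}{r}|f'|^p$ (since $q>p-1$ forces $q+1>p$), so a Gronwall-type bound based on that term alone would allow finite-$r$ blow-up; the remedy is precisely to rely instead on the drift-generated dissipation $-\beta r(f')^2$, whose quadratic exponent dominates the subquadratic $|f'|^{q+1}$ thanks to $q<1$. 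A minor technical point I would address is that the identity for $E'$ must be justified where $f'$ may vanish, which is done by working with the $C^1$ variable $F$ of the system~\eqref{CP}, for which the manipulations involve only smooth powers.
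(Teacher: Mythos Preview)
Your argument is correct and follows essentially the same route as the paper: compute $E'$ along~\eqref{SSODE}, drop the non-positive terms, and use $q+1<2$ together with Young's inequality to absorb the subquadratic source $|f'|^{q+1}$ into the drift dissipation $-\beta r|f'|^2$ for $r\ge r_0$, yielding a uniform bound on $E$ on bounded intervals. The paper carries out the very same steps (with an explicit Young constant) and integrates to obtain a bound linear in $r$, rather than arguing by contradiction, but this is a cosmetic difference.
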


\begin{proof}
We deduce from the definition~\eqref{energy} of $E$ and~\eqref{SSODE} that
\begin{equation*}
\begin{split}
E'(r)&=f'(r)[(p-1)|f'(r)|^{p-2}f''(r)+\alpha f(r)]\\
&=f'(r)\left[-\frac{N-1}{r}|f'(r)|^{p-2}f'(r)-\beta rf'(r)+|f'(r)|^{q}\right]\\
&=-\frac{N-1}{r}|f'(r)|^p-\beta r|f'(r)|^2+|f'(r)|^{q}f'(r)\leq |f'(r)|^{q+1}-\beta r|f'(r)|^2.
\end{split}
\end{equation*}
Observe that we cannot make use of~\eqref{decr}, as it only holds true on $(0,R(a))$, and this is why we cannot directly control the sign of $E'$ on $(0,R_{\max}(a))$. Nevertheless, since $q<p/2<1$ by~\eqref{exp.eq1}, an application of Young's inequality entails that, for $0<r_0\le r < R_{\max}(a)$,
\begin{align*}
	E'(r) & \le (\beta r)^{(q+1)/2} |f'(r)|^{q+1} (\beta r)^{-(q+1)/2} - \beta r|f'(r)|^2 \\
	& \le \frac{q+1}{2} \beta r |f'(r)|^2 + \frac{1-q}{2} (\beta r)^{-(q+1)/(1-q)} - \beta r|f'(r)|^2 \\
	& \le (\beta r_0)^{-(q+1)/(1-q)}.
\end{align*}
Integrating over $(r_0,r)$, we end up with
\begin{equation*}
	0 \le E(r) \le E(r_0) + (\beta r_0)^{-(q+1)/(1-q)} r, \qquad r_0\le r < R_{\max}(a),
\end{equation*}
which prevents the blow-up of both $f$ and $f'$ at a finite value of $r$ and completes the proof.
\end{proof}

Having next in mind that we want to prove the existence of a solution with the decay~\eqref{fast.decay} as $r\to\infty$, we introduce
$$
w(r;a):=r^{\mu}f(r;a), \qquad (r,a)\in[0,\infty)\times(0,\infty),
$$
with $\mu$ defined in \eqref{const}. Straightforward calculations show that $w(\cdot;a)$ solves the differential equation
\begin{equation}\label{ODEw}
\begin{split}
(p-1)r^2w''(r)&+(N-1-2\mu(p-1))rw'(r)+\mu[(p-1)(\mu+1)-N+1]w(r)\\&+|W(r)|^{2-p}[\beta r^{\gamma+1}w'(r)-|W(r)|^q]=0,
\end{split}
\end{equation}
for $r>0$, with
$$
W(r;a):=rw'(r;a)-\mu w(r;a), \qquad \gamma:=\frac{2q-p}{q-p+1}=-\frac{1}{\beta}<0.
$$
We next split the range of $a\in(0,\infty)$ into three disjoint sets according to the expected properties of $w(\cdot;a)$:
\begin{equation*}
\begin{split}
&\mathcal{A}:=\{a>0: {\rm there \ exists} \ R_1(a)\in(0,R(a)) \ {\rm such \ that} \ w'(R_1(a);a)=0\},\\
&\mathcal{B}:=\{a>0: w'(\cdot;a)>0 \ {\rm in} \ (0,\infty) \ {\rm and} \ \lim\limits_{r\to\infty}w(r;a)<\infty\},\\
&\mathcal{C}:=\{a>0: w'(\cdot;a)>0 \ {\rm in} \ (0,\infty) \ {\rm and} \ \lim\limits_{r\to\infty}w(r;a)=\infty\}.
\end{split}
\end{equation*}
Recall that
$$
w'(r;a)=r^{\mu-1}[rf'(r;a)+\mu f(r;a)]\sim\mu ar^{\mu-1}>0
$$
as $r\to0$, so that $w'(\cdot;a)>0$ in a right neighborhood of $r=0$. Therefore, $\mathcal{A}\cup\mathcal{B}\cup\mathcal{C}=(0,\infty)$. In the next subsections, we perform a careful analysis of these three sets.

\subsection{Characterization of the set $\mathcal{A}$}\label{subsec.A}

We begin with a lemma listing some general properties of the solutions $w(\cdot;a)$ to~\eqref{ODEw} for $a\in\mathcal{A}$.

\begin{lemma}\label{lem.A1}
Let $a>0$. Then the following statements are equivalent:

(i) $a\in\mathcal{A}$.

(ii) There exists $R_1(a)\in(0,R(a))$ such that $w'(R_1(a);a)=0$, $w'(\cdot;a)>0$ in $(0,R_1(a))$, $w'(\cdot;a)<0$ in $(R_1(a),R(a))$ and $w''(R_1(a);a)<0$.

(iii) $\sup\limits_{[0,R(a))}w(\cdot;a)<K^*$, where $K^*$ is the constant defined in~\eqref{const}.
\end{lemma}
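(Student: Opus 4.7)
The proof proceeds through the chain of implications (i) $\Rightarrow$ (ii) $\Rightarrow$ (iii) $\Rightarrow$ (i). Its keystone is an elementary but decisive computation at critical points of $w(\cdot;a)$: let $r_0 \in (0, R(a))$ be any point where $w'(r_0;a) = 0$. Then $W(r_0;a) = -\mu w(r_0;a) < 0$ since $f(\cdot;a)>0$ on $[0,R(a))$, and substituting into \eqref{ODEw} together with the identity $(\mu K^*)^{q-p+1} = (p-1)(\mu+1)-N+1$ (a rewriting of \eqref{const}) gives
\begin{equation*}
(p-1) r_0^2\, w''(r_0;a) = \mu\, w(r_0;a)\, \bigl[ (\mu w(r_0;a))^{q-p+1} - (\mu K^*)^{q-p+1} \bigr].
\end{equation*}
Since $q-p+1 > 0$ by \eqref{exp.eq1}, the sign of $w''(r_0;a)$ coincides with that of $w(r_0;a) - K^*$.

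For (i) $\Rightarrow$ (ii), let $R_1 = R_1(a)$ denote the smallest positive zero of $w'(\cdot;a)$, which lies in $(0, R(a))$ by assumption. On $(0, R_1)$ one has $w'(\cdot;a) > 0$, so $w''(R_1;a) \leq 0$, whence $w(R_1;a) \leq K^*$ by the sign dichotomy. Equality is ruled out by a uniqueness argument: the constant $w_\infty \equiv K^*$ solves \eqref{ODEw}, and near the state $(w, w') = (K^*, 0)$ the right-hand side of \eqref{ODEw} is smooth in $(w, w')$ because $W = -\mu K^* \neq 0$ there. Hence the Cauchy--Lipschitz theorem applies at $r=R_1$, and $w(R_1;a) = K^*$ together with $w'(R_1;a)=0$ would force $w(\cdot;a) \equiv K^*$, contradicting $w(0;a) = 0$. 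Thus $w(R_1;a) < K^*$ and $w''(R_1;a) < 0$. Setting $r_2 := \inf\{r > R_1 : w'(r;a) = 0\}$, suppose for contradiction that $r_2 < R(a)$. Then $w'(\cdot;a) < 0$ on $(R_1, r_2)$, so $w(r_2;a) < w(R_1;a) < K^*$, and the sign dichotomy yields $w''(r_2;a) < 0$. But $w'(\cdot;a)$ returning from negative values to $0$ at $r_2$ forces $w''(r_2;a) \geq 0$ by continuity, a contradiction. Hence $w'(\cdot;a) < 0$ on $(R_1, R(a))$, which establishes (ii).

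The implication (ii) $\Rightarrow$ (iii) is immediate: under (ii), $w(\cdot;a)$ attains its supremum on $[0,R(a))$ at $R_1$, and $w''(R_1;a) < 0$ combined with the sign dichotomy gives $w(R_1;a) < K^*$.

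For the remaining implication (iii) $\Rightarrow$ (i), assume $\sup_{[0,R(a))} w(\cdot;a) < K^*$ and, for contradiction, that $a \notin \mathcal{A}$, so that $w'(\cdot;a) > 0$ throughout $(0, R(a))$. Since $w(0;a) = 0$ and $w(R(a);a) = 0$ whenever $R(a) < \infty$, this monotonicity forces $R(a) = \infty$, placing $a \in \mathcal{B} \cup \mathcal{C}$. The case $a \in \mathcal{C}$ is excluded outright because $\sup w = \infty > K^*$. It remains to handle the subtle case $a \in \mathcal{B}$, where $w(\cdot;a)$ increases to some limit $L \in (0, K^*)$ at infinity. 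My plan is to derive a contradiction by showing $L = K^*$ through an asymptotic analysis of \eqref{ODEw}: substituting the formal ansatz $w \equiv L$ (with $w' = w'' = 0$) into \eqref{ODEw} yields the algebraic identity $\mu[(p-1)(\mu+1)-N+1]\, L = (\mu L)^{q+2-p}$, which is equivalent, via $q-p+1>0$, to $L = K^*$. Making this rigorous is the main obstacle: one must justify passage to the limit in \eqref{ODEw} along a suitable sequence $r_n \to \infty$. I would proceed by exploiting the $L^1$-integrability of $w'(\cdot;a)$ on $[1,\infty)$ (which follows from boundedness of $w$ together with monotonicity) combined with upper bounds on $|w''(\cdot;a)|$ read off directly from \eqref{ODEw}, using that $f(r;a) \to 0$ and $f'(r;a) \to 0$ as $r\to\infty$ by Lemma~\ref{lem.1}. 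These inputs should yield a sequence $r_n\to\infty$ along which $w'(r_n;a) \to 0$, $r_n w'(r_n;a) \to 0$, and $r_n^2 w''(r_n;a) \to 0$, enabling the passage to the limit and producing the contradiction $L = K^*$.
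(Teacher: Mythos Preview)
Your argument follows the paper's proof closely: the sign dichotomy at critical points of $w$, the uniqueness argument via the constant solution $w\equiv K^*$ to exclude $w(R_1)=K^*$, and the strategy of passing to the limit in~\eqref{ODEw} along a suitable sequence for (iii)$\Rightarrow$(i) are exactly the same.

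The one step you rightly flag as the main obstacle---producing a sequence $r_n\to\infty$ with \emph{both} $r_n w'(r_n)\to 0$ and $r_n^2 w''(r_n)\to 0$---is dispatched in the paper by invoking \cite[Lemma~2.9]{IL13b}. Your proposed route (the $L^1$-integrability of $w'$ together with bounds on $w''$ read off from~\eqref{ODEw}) does yield a sequence along which $r_n w'(r_n)\to 0$, but along that sequence the ODE only tells you that $r_n^2 w''(r_n)$ converges to the constant $\frac{1}{p-1}\big[(\mu L)^{q-p+2}-\mu(\mu K^*)^{q-p+1}L\big]$, which is nonzero precisely when $L\ne K^*$; so you cannot conclude directly. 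A short additional argument is still needed---for instance, after the change of variable $v(s)=w(e^s)$ one can pass to local minima of $v'$ (where $v''=0$), or apply the mean value theorem on unit intervals when $v'\to 0$---and this is essentially the content of the cited lemma.
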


\begin{proof}
Consider $a\in\mathcal{A}$ and denote the smallest positive zero of $w'$ in $(0,R(a))$ by $R_1(a)$, its existence being guaranteed by the definition of $\mathcal{A}$. Then $w'>0$ in $(0,R_1(a))$ and $w''(R_1(a))\leq0$. Assume for contradiction that $w''(R_1(a))=0$. It then follows from evaluating~\eqref{ODEw} at $r=R_1(a)$ that
$$
\mu(\mu K^*)^{q-p+1}w(R_1(a))-(\mu w(R_1(a)))^{q-p+2}=0;
$$
that is, $w(R_1(a))=K^*$. Since $w'(R_1(a))=0$ and the constant function $K^*$ is a solution to~\eqref{ODEw} on $(0,R(a))$, we conclude by uniqueness that $w\equiv K^*$ on $(0,R(a))$, which contradicts the fact that $w(0)=0$. Therefore $w''(R_1(a))<0$ and there is a maximal interval $(R_1(a),R_2)\subseteq(R_1(a),R(a))$ such that $w'<0$ on $(R_1(a),R_2)$. Let us first notice that, by evaluating~\eqref{ODEw} at $r=R_1(a)$, we have
\begin{equation}\label{interm2}
(p-1)R_1(a)^2w''(R_1(a))=(\mu w(R_1(a)))^{q-p+2}-\mu(\mu K^*)^{q-p+1}w(R_1(a)),
\end{equation}
and the negativity of $w''(R_1(a))$, along with the positivity of $q-p+1$, entails that $w(R_1(a))<K^*$. Since the maximum of $w$ on $[0,R_2]$ is attained at $r=R_1(a)$, we further deduce that
\begin{equation}\label{max}
w(r)<K^* \qquad {\rm for \ any} \ r\in[0,R_2].
\end{equation}
Assume now for contradiction that $R_2<R(a)$. It follows that
$$
w'(R_2)=0, \qquad w''(R_2)\geq0,
$$
whence, by evaluating~\eqref{ODEw} at $r=R_2$, we obtain from the similar equality to~\eqref{interm2} but with $R_1(a)$ replaced by $R_2$, that $w(R_2)\geq K^*$, which is a contradiction. Thus, $R_2=R_1(a)$ and we proved that~(i) implies~(ii).

Since~(ii) implies as above that~\eqref{max} holds true on $(0,R(a))$, we readily obtain that~(ii) implies~(iii).

Finally, if $w$ satisfies~(iii), then let us assume for contradiction that $w'$ does not vanish in $(0,R(a))$. Then $w'>0$ in $(0,R(a))$ and $w>0$ in $(0,R(a))$ as well, which gives that $R(a)=\infty$ and there exists a limit
\begin{equation}\label{interm3}
\lim\limits_{r\to\infty}w(r)=l\in(0,K^*).
\end{equation}
This implies in particular that there exists an increasing sequence $(r_k)_{k\geq1}$ such that $r_kw'(r_k)\to0$ as $k\to\infty$. Thus, according to \cite[Lemma~2.9]{IL13b}, there exists a sequence $(\varrho_k)_{k\geq1}$ such that, at the same time,
\begin{equation}\label{interm4}
\lim\limits_{k\to\infty}\varrho_k=\infty, \qquad \lim\limits_{k\to\infty}\varrho_kw'(\varrho_k)=0, \qquad
\lim\limits_{k\to\infty}\varrho_k^2w''(\varrho_k)=0.
\end{equation}
Taking $r=\varrho_k$ in~\eqref{ODEw} and passing to the limit as $k\to\infty$, we infer from~\eqref{interm3} and~\eqref{interm4} that
$$
\mu l(\mu K^*)^{q-p+1}-(\mu l)^{q-p+2}=0,
$$
hence $l=K^*$, which contradicts~\eqref{interm4}. Consequently, $w'$ vanishes at least once in $(0,R(a))$ and $a\in\mathcal{A}$, showing thus that~(iii) implies~(i).
\end{proof}

The next lemma proves that the solutions to~\eqref{CP} corresponding to elements in $\mathcal{A}$ have a compact positivity set.

\begin{lemma}\label{lem.A2}
Let $a>0$. Then $a\in\mathcal{A}$ if and only if $R(a)<\infty$.
\end{lemma}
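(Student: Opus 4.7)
The plan is to prove the two implications separately. The easier direction, $R(a)<\infty \Rightarrow a\in\mathcal{A}$, follows immediately by applying Rolle's theorem to $w(\cdot;a)$ on the compact interval $[0,R(a)]$: continuity of $f(\cdot;a)$ gives $f(R(a);a)=0$, hence $w(R(a);a)=w(0;a)=0$, while $w(\cdot;a)>0$ on $(0,R(a))$, so some interior critical point $R_1\in(0,R(a))$ with $w'(R_1;a)=0$ must exist, placing $a$ in $\mathcal{A}$.

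For the reverse implication $a\in\mathcal{A}\Rightarrow R(a)<\infty$, I would argue by contradiction, assuming $a\in\mathcal{A}$ and $R(a)=\infty$. By Lemma~\ref{lem.A1}, $w'(\cdot;a)<0$ on $(R_1(a),\infty)$, so $w(\cdot;a)$ is strictly decreasing and positive on this ray and thus admits a limit $\ell\in[0,w(R_1(a);a))$ at infinity. The main obstacle here is to rule out the degenerate case $\ell=0$, since the sequence argument used in the proof of Lemma~\ref{lem.A1} only yields information when the limit is positive. I would dispose of this possibility by invoking the optimal gradient estimate~\eqref{elem2}: setting $M:=\kappa_0\bigl(1+\|f\|_\infty^{1/(\alpha p)}\bigr)$ and integrating $|(f^{-1/\mu})'(r;a)|\le M$ over $(0,r)$ yields $f(r;a)^{-1/\mu}\le a^{-1/\mu}+Mr$, so that
\begin{equation*}
w(r;a) = r^\mu f(r;a) \geq \left( \frac{a^{-1/\mu}}{r} + M \right)^{-\mu} \longrightarrow M^{-\mu} > 0 \quad \text{as } r\to\infty,
\end{equation*}
which forces $\ell\geq M^{-\mu}>0$.

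Once the strict positivity of $\ell$ is in hand, the proof concludes by mirroring the final step in the proof of Lemma~\ref{lem.A1}: since $w(\cdot;a)$ is bounded with $w(r;a)\to\ell$, the mean value theorem produces a sequence $r_k\to\infty$ with $r_k w'(r_k;a)\to 0$, and \cite[Lemma~2.9]{IL13b} then refines it into a sequence $\varrho_k\to\infty$ satisfying $\varrho_k w'(\varrho_k;a)\to 0$, $\varrho_k^2 w''(\varrho_k;a)\to 0$, and $w(\varrho_k;a)\to\ell$. Passing to the limit in~\eqref{ODEw} evaluated at $r=\varrho_k$ (the term $r^{\gamma+1}w'=r^{\gamma}\cdot rw'$ vanishes since $\gamma<0$) and using the definition~\eqref{const} of $K^*$ forces $\ell=K^*$, which contradicts $\ell\leq w(R_1(a);a)<K^*$ guaranteed by Lemma~\ref{lem.A1}(iii). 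This contradiction completes the proof.
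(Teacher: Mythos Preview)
Your proof is correct and uses essentially the same ingredients as the paper's proof: Rolle's theorem for the easy direction, and for the hard direction the gradient estimate~\eqref{elem2} combined with the sequence argument from \cite[Lemma~2.9]{IL13b}. The only difference is the order: the paper first invokes the sequence argument to force $\ell=0$ (since $\ell\in(0,K^*)$ would give $\ell=K^*$), and then derives a contradiction from the gradient estimate, whereas you first use the gradient estimate to secure $\ell>0$ and then run the sequence argument to reach $\ell=K^*$, contradicting $\ell<K^*$; the two orderings are logically equivalent.
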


\begin{proof}
The converse statement is almost obvious: if $R(a)<\infty$, then $f(R(a))=0$ and thus $w(0)=w(R(a))=0$. By Rolle's theorem, $w'$ vanishes at least once in $(0,R(a))$, proving that $a\in\mathcal{A}$.

Consider now $a\in\mathcal{A}$ and assume for contradiction that $R(a)=\infty$. We deduce from Lemma~\ref{lem.A1}~(ii) that $w$ decreases on $(R_1(a),\infty)$ and there exists the limit
\begin{equation}\label{lim}
\lim\limits_{r\to\infty}w(r)=l\in[0,K^*).
\end{equation}
Arguing as in the final part of the proof of Lemma~\ref{lem.A1} above, we conclude that $l=0$. Furthermore, according to Lemma~\ref{lem.1},
\begin{equation*}
\left|\left(f^{-1/\mu}\right)'(r)\right|\leq \kappa_1 := \kappa_0 \left[ 1 + \|f\|_\infty^{1/(\alpha p)} \right], \qquad r\geq 0;
\end{equation*}
hence
$$
f^{-1/\mu}(r)\leq a^{-1/\mu} + \kappa_1 r, \qquad r\geq 0,
$$
which is equivalent, taking into account the positivity of $\alpha$ and $\beta$, to
\begin{equation}\label{interm5}
f(r)\geq\left[a^{-1/\mu} + \kappa_1 r\right]^{-\mu}, \qquad r\geq0.
\end{equation}
The estimate~\eqref{interm5} translates in terms of $w$ as follows:
\begin{equation}\label{interm6}
w(r)\geq\left[\frac{r}{a^{-1/\mu} + \kappa_1 r}\right]^{\mu},
\end{equation}
and we deduce by letting $r\to\infty$ in~\eqref{interm6} with the help of~\eqref{lim} that
$$
0=\lim\limits_{r\to\infty}w(r)\geq \kappa_1^{-\mu}>0,
$$
and a contradiction. Therefore, $R(a)<\infty$ and the proof is complete.
\end{proof}
We end this section with the non-emptiness of the set $\mathcal{A}$.

\begin{lemma}\label{lem.A3}
The set $\mathcal{A}$ is non-empty, open, and it contains an interval $(a^*,\infty)$ for some $a^*>0$.
\end{lemma}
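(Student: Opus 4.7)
The plan splits into openness and the inclusion $(a^*,\infty)\subset\mathcal{A}$, from which non-emptiness follows.

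For openness, fix $a_0\in\mathcal{A}$ and use Lemma~\ref{lem.A1}(ii) to obtain $R_1:=R_1(a_0)\in(0,R(a_0))$ with $w'(R_1;a_0)=0$ and $w''(R_1;a_0)<0$. The locally Lipschitz right-hand side of~\eqref{CP} ensures $C^1$ dependence of $(f,F)(\cdot;a)$ on $a$, and hence continuity of $(r,a)\mapsto(w,w',w'')(r;a)$ around $(R_1,a_0)$. Applying the implicit function theorem to $w'(r;a)=0$ at $(R_1,a_0)$, using the non-degeneracy $w''(R_1;a_0)\ne 0$, produces a continuous curve $a\mapsto\tilde{R}_1(a)$ near $a_0$ with $w'(\tilde{R}_1(a);a)=0$ and $\tilde{R}_1(a_0)=R_1$. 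Continuity preserves $w''(\tilde{R}_1(a);a)<0$ together with the positivity of $w(\cdot;a)$ on $(0,\tilde{R}_1(a)]$, so $\tilde{R}_1(a)\in(0,R(a))$ and Lemma~\ref{lem.A1}(ii) returns $a\in\mathcal{A}$, establishing openness.

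For the inclusion $(a^*,\infty)\subset\mathcal{A}$, I argue by contradiction: suppose $a\in\mathcal{B}\cup\mathcal{C}$, which by Lemma~\ref{lem.A2} is equivalent to $R(a)=\infty$. The gradient estimate~\eqref{elem2} then applies, and integrating from $r=0$ yields
\[
f^{-1/\mu}(r;a)\le a^{-1/\mu}+\kappa_0\bigl[1+a^{1/(\alpha p)}\bigr]r,\qquad r\ge 0,
\]
equivalently $f(r;a)\le\bigl(a^{-1/\mu}+c(a)\,r\bigr)^{-\mu}$ with $c(a):=\kappa_0[1+a^{1/(\alpha p)}]$. Multiplying by $r^\mu$ and noting that $r\mapsto r/(a^{-1/\mu}+c(a)\,r)$ is increasing on $[0,\infty)$ with supremum $1/c(a)$ gives
\[
\sup_{r\ge 0}w(r;a)\le c(a)^{-\mu}=\kappa_0^{-\mu}\bigl[1+a^{1/(\alpha p)}\bigr]^{-\mu},
\]
whose right-hand side tends to $0$ as $a\to\infty$. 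Choose $a^*>0$ so large that this bound is strictly smaller than $K^*$ for every $a>a^*$; Lemma~\ref{lem.A1}(iii) would then force such $a$ into $\mathcal{A}$, contradicting $R(a)=\infty$. Hence every $a>a^*$ belongs to $\mathcal{A}$, which proves both the claimed inclusion and non-emptiness.

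The only non-routine ingredient is the gradient estimate~\eqref{elem2}, already established in Lemma~\ref{lem.1} from the PDE-level bound~\eqref{grad.est}; once it is at hand, both the openness and the large-$a$ analysis reduce to elementary manipulations based on the three-alternatives classification of Lemma~\ref{lem.A1}.
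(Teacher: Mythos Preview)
Your openness argument is fine, but the large-$a$ argument contains a fatal sign error. From $\left|\bigl(f^{-1/\mu}\bigr)'\right|\le c(a)$ and $f'<0$ you correctly obtain, after integration,
\[
f^{-1/\mu}(r;a)\le a^{-1/\mu}+c(a)\,r.
\]
But since $\mu>0$ the map $x\mapsto x^{-\mu}$ is \emph{decreasing}, so this inequality is equivalent to the \emph{lower} bound
\[
f(r;a)\ \ge\ \bigl(a^{-1/\mu}+c(a)\,r\bigr)^{-\mu},
\]
not the upper bound you claim. This is precisely the inequality~\eqref{interm5} used in the proof of Lemma~\ref{lem.A2}. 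Multiplying by $r^{\mu}$ therefore yields only
\[
w(r;a)\ \ge\ \Bigl(\frac{r}{a^{-1/\mu}+c(a)\,r}\Bigr)^{\mu}\xrightarrow[r\to\infty]{}c(a)^{-\mu},
\]
which says nothing about $\sup w$ being below $K^*$; in fact $c(a)^{-\mu}\to 0$ as $a\to\infty$, so the lower bound becomes trivial. The gradient estimate~\eqref{elem2} simply cannot control $f$ from above, and your route to Lemma~\ref{lem.A1}(iii) collapses.

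The paper's proof avoids this by a completely different mechanism: a rescaling $f(r;a)=a\,g\bigl(ra^{(2-p)/p};a\bigr)$ under which the absorption term acquires the small coefficient $a^{(2q-p)/p}$ (recall $2q<p$). As $a\to\infty$ the rescaled profile converges to the solution of the fast $p$-Laplacian self-similar ODE without absorption, which is known to vanish at a finite point; continuous dependence then forces $R(a)<\infty$ for large $a$, and Lemma~\ref{lem.A2} gives $a\in\mathcal{A}$.
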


\begin{proof}
We argue as in \cite[Theorem~2]{Shi04} and \cite[Proposition~2.11]{IL13b} by employing a scaling argument. Specifically, we define the function $g(\cdot;a)$ by
\begin{equation}\label{resc1}
f(r;a)=ag(s;a), \qquad s=ra^{(2-p)/p}, \qquad r\in[0,R(a)).
\end{equation}
Plugging~\eqref{resc1} into the equation~\eqref{SSODE}, we obtain after direct calculations that the function $g(\cdot;a)$ solves the following differential equation
\begin{equation}\label{diff.g}
\begin{split}
(|g'|^{p-2}g')'(s;a)+\frac{N-1}{s}(|g'|^{p-2}g')(s;a)&+\alpha g(s;a)+\beta sg'(s;a)\\&-a^{(2q-p)/p}|g'(s;a)|^p=0,
\end{split}
\end{equation}
for $s\in(0,a^{(2-p)/p}R(a))$, with initial conditions
\begin{equation}\label{init.g}
g(0;a)=1, \qquad g'(0;a)=0.
\end{equation}
The limit of~\eqref{diff.g}-\eqref{init.g} as $a\to\infty$ reads
\begin{equation*}
\left\{\begin{array}{ll}
	(|h'|^{p-2}h')'(s)+ \displaystyle{\frac{N-1}{s}} (|h'|^{p-2}h')(s)+\alpha h(s)+\beta sh'(s)=0, & s>0,\\
	& \\
	h(0)=1, \ h'(0)=0, &
\end{array}\right.
\end{equation*}
and it follows from \cite[Proposition~2.11]{IL13b} that there exists $S_0>0$ such that $h(S_0)=0$, $h(s)>0$ for $s\in(0,S_0)$ and $h'(s)<0$ for $s\in(0,S_0]$. It is then easy to deduce from the continuous dependence of solutions to the Cauchy problem \eqref{diff.g}-\eqref{init.g} that $g(\cdot;a)$ vanishes at some positive $s$ depending on $a$ for $a>0$ large enough. Lemma~\ref{lem.A2} therefore provides the existence of $a^*>0$ such that $(a^*,\infty)\subseteq\mathcal{A}$. Furthermore, continuous dependence and Lemma~\ref{lem.A1}~(iii) entail that $\mathcal{A}$ is open.
\end{proof}

\subsection{Characterization of the sets $\mathcal{C}$ and $\mathcal{B}$. Existence}\label{subsec.C}

We next establish some properties of the elements in the set $\mathcal{C}$. In this direction, the constant $K^*$ introduced in \eqref{const} plays a fundamental role.

\begin{lemma}\label{lem.C1}
Let $a>0$. Then $a\in\mathcal{C}$ if and only if
\begin{equation}\label{interm7}
\sup\limits_{r\in[0,R(a))}w(r;a)>K^*.
\end{equation}
\end{lemma}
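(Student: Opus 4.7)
The plan is to leverage the disjoint decomposition $(0,\infty) = \mathcal{A} \cup \mathcal{B} \cup \mathcal{C}$ together with Lemma~\ref{lem.A1}~(iii), which already characterizes $\mathcal{A}$ as $\{a > 0 : \sup_{[0,R(a))} w(\cdot;a) < K^*\}$. The equivalence in the statement then reduces to showing that membership in $\mathcal{B}$ corresponds to $\sup w = K^*$, a cleaner formulation that splits the argument into two mild steps.

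The forward implication is immediate: for $a \in \mathcal{C}$, monotonicity of $w(\cdot;a)$ combined with $\lim_{r\to\infty} w(r;a) = \infty$ forces $\sup w = \infty > K^*$. For the converse, assuming $\sup_{[0,R(a))} w(\cdot;a) > K^*$, Lemma~\ref{lem.A1}~(iii) already rules out $a \in \mathcal{A}$, so $a \in \mathcal{B} \cup \mathcal{C}$, and the remaining task is to exclude $\mathcal{B}$ by establishing that every $a \in \mathcal{B}$ satisfies $\sup w = K^*$.

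To this end I would mimic the last part of the proof of Lemma~\ref{lem.A1}. For $a \in \mathcal{B}$, the function $w(\cdot;a)$ is strictly increasing on $(0,\infty)$ from $w(0;a) = 0$ towards a finite limit $L$, which is necessarily positive because $w(r;a) \sim a r^\mu$ as $r \to 0$. Then \cite[Lemma~2.9]{IL13b} supplies a sequence $\rho_k \to \infty$ along which $\rho_k w'(\rho_k) \to 0$ and $\rho_k^2 w''(\rho_k) \to 0$. Passing to the limit in~\eqref{ODEw} at $r = \rho_k$, the first two terms vanish by the choice of $\rho_k$, and the term involving $\beta \rho_k^{\gamma+1} w'(\rho_k)$ inside the bracket also vanishes since $\gamma < 0$ permits to write it as $\rho_k^\gamma \cdot \rho_k w'(\rho_k) \to 0$, while $|W(\rho_k)|^{2-p} \to (\mu L)^{2-p}$ and $|W(\rho_k)|^q \to (\mu L)^q$. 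Using the definition of $K^*$ in~\eqref{const} to replace $(p-1)(\mu+1)-N+1$ by $(\mu K^*)^{q-p+1}$, the remaining terms yield
\begin{equation*}
\mu L \bigl[(\mu K^*)^{q-p+1} - (\mu L)^{q-p+1}\bigr] = 0,
\end{equation*}
which forces $L = K^*$ since $L > 0$ and $q-p+1 > 0$. The assumption $\sup w > K^*$ is thereby contradicted, placing $a$ in $\mathcal{C}$.

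I do not anticipate a serious obstacle, since every ingredient, in particular the oscillation lemma from \cite{IL13b} and the limit analysis of~\eqref{ODEw}, has already been deployed in Lemmas~\ref{lem.A1} and~\ref{lem.A2}; the present statement is essentially the symmetric counterpart of Lemma~\ref{lem.A1}~(iii), obtained by reassembling those ingredients and extracting $L = K^*$ in the finite-limit case.
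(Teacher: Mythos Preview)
Your proposal is correct and follows essentially the same route as the paper: both argue the forward direction directly from the definition of $\mathcal{C}$, and for the converse both use Lemma~\ref{lem.A1}~(iii) to exclude $\mathcal{A}$, then apply the oscillation argument from \cite[Lemma~2.9]{IL13b} to show that a finite limit of $w$ must equal $K^*$, contradicting the hypothesis. The only cosmetic difference is that the paper phrases the finite-limit case directly as ``$l<\infty$'' rather than explicitly invoking membership in $\mathcal{B}$, but the substance is identical.
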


\begin{proof}
The direct implication follows obviously from the definition of the set $\mathcal{C}$. Conversely, Lemma~\ref{lem.A1} and~\eqref{interm7} imply that $a\notin\mathcal{A}$. Consequently, $w(\cdot;a)$ is increasing, $R(a)=\infty$ and there is $l\in(K^*,\infty]$ such that $w(r;a)\to l$ as $r\to\infty$. If $l<\infty$, the same argument as in the proof of Lemma~\ref{lem.A1} entails that $l=K^*$ and a contradiction. Therefore, $l=\infty$ and $a\in\mathcal{C}$.
\end{proof}

\begin{lemma}\label{lem.C2}
The set $\mathcal{C}$ is non-empty, open and contains an interval of the form $(0,a_*)$ for some $a_*\in(0,\infty)$.
\end{lemma}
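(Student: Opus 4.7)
The plan is to verify three claims --- openness of $\mathcal{C}$, its non-emptiness, and the inclusion of an interval $(0, a_*)$ --- of which the first is almost automatic. Indeed, for openness, given $a_0 \in \mathcal{C}$, Lemma~\ref{lem.C1} provides $r_0 \in (0,\infty)$ with $w(r_0; a_0) > K^*$. Since $R(a_0) = \infty$ and the Cauchy problem~\eqref{CP} has a locally Lipschitz right-hand side, continuous dependence on the parameter $a$ yields $w(r_0; a) > K^*$ throughout a small neighborhood of $a_0$, and a further application of Lemma~\ref{lem.C1} places this entire neighborhood inside $\mathcal{C}$.

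The real work is in producing an interval $(0, a_*) \subset \mathcal{C}$. I would argue by a rescaling dual to the one of Lemma~\ref{lem.A3}, but now engineered for \emph{small} values of $a$. Set
$$
\phi(s;a) := a^{-1} f\bigl(a^{-1/\mu} s; a\bigr), \qquad s \geq 0,
$$
so that $\phi(0;a) = 1$, $\phi'(0;a) = 0$, and, crucially, $w(a^{-1/\mu} s; a) = s^\mu \phi(s;a)$, which aligns the rescaling with the characterization in Lemma~\ref{lem.C1}. Differentiating and using the bound $|f'(r;a)| \leq (\alpha a)^{1/q}$ from~\eqref{decr} give
$$
\sup_{s\geq 0} |\phi'(s;a)| \leq \alpha^{1/q} a^{\theta_0}, \qquad \theta_0 := \frac{1}{q} - \frac{\mu+1}{\mu} = \frac{p - 2q}{q(p-q)},
$$
and $\theta_0 > 0$ by the range~\eqref{exp.eq1} (where $q < p/2$). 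Consequently $\phi(\cdot;a) \to 1$ uniformly on every compact subinterval of $[0,\infty)$ as $a \to 0^+$; in particular, the positivity interval $[0, a^{1/\mu} R(a))$ of $\phi(\cdot;a)$ covers any prescribed interval $[0, R]$ once $a$ is small enough.

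To conclude, I would fix $R > 0$ large enough that $R^\mu > 2 K^*$. For all small $a$, the previous step yields $\phi(R;a) \geq 1/2$, whence
$$
w\bigl(a^{-1/\mu} R; a\bigr) = R^\mu \phi(R;a) > K^*,
$$
and Lemma~\ref{lem.C1} delivers $a \in \mathcal{C}$, giving simultaneously non-emptiness and the inclusion $(0, a_*) \subset \mathcal{C}$. No step is genuinely delicate; the only subtle point is the identification of the correct scaling, dictated by the requirement that $w$ appear at order one in the limit $a \to 0^+$, and the corresponding collapse of the rescaled ODE to the constant profile $\phi \equiv 1$, which is precisely what forces the threshold $K^*$ to be exceeded at $s = R$.
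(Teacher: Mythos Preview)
Your argument is correct and rests on the same two ingredients as the paper's: the derivative bound~\eqref{decr}, integrated to give a lower bound on $f$, and the characterization of $\mathcal{C}$ in Lemma~\ref{lem.C1}. The paper works directly---writing $f(r;a)\ge a-(\alpha a)^{1/q}r$ and evaluating $w$ at $r=R_0(a)/2$ with $R_0(a)=\alpha^{-1/q}a^{(q-1)/q}$---whereas your rescaling $\phi(s;a)=a^{-1}f(a^{-1/\mu}s;a)$ simply repackages that same estimate as the uniform convergence $\phi(\cdot;a)\to 1$ and evaluates $w$ at $r=a^{-1/\mu}R$; un-rescaling your bound $|\phi'|\le\alpha^{1/q}a^{\theta_0}$ recovers exactly the paper's inequality.
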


\begin{proof}
We argue as in the proof of \cite[Proposition~2.13]{IL13b}. Let $a>0$. By integrating the estimate~\eqref{decr} on $(0,r)$, we infer that
\begin{equation}\label{interm8}
f(r;a)\geq a-(a\alpha)^{1/q}r, \qquad r\in[0,R(a)],
\end{equation}
which in particular gives
$$
R(a)\geq R_0(a):=a^{(q-1)/q}\alpha^{-1/q}.
$$
Consequently, $R_0(a)/2\in(0,R(a))$ and it follows by evaluating~\eqref{interm8} at $r=R_0(a)/2$ that ,
$$
w\left(\frac{R_0(a)}{2};a\right)\geq\left(\frac{a^{(q-1)/q}\alpha^{-1/q}}{2}\right)^{\mu}\frac{a}{2}\geq2^{-(\mu+1)}\alpha^{-\mu/q}a^{(2q-p)/q(q-p+1)}.
$$
Since
$$
\frac{2q-p}{q(q-p+1)}<0,
$$
it follows that $w(R_0(a)/2;a)>K^*$ if $a$ is small enough. Combining the latter with Lemma~\ref{lem.C1} ensures that $\mathcal{C}$ is non-empty and contains a right neighborhood of $a=0$. Moreover, Lemma~\ref{lem.C1} and the continuous dependence ensure that $\mathcal{C}$ is open, completing the proof.
\end{proof}

The proof of the existence part in the statements of Theorems~\ref{th.dim1} and~\ref{th.dimN} is completed by the following result.

\begin{corollary}\label{cor.B}
(a) $\mathcal{B}$ is non-empty and $\mathcal{B}\subseteq[a_*,a^*]$. Moreover, $a\in\mathcal{B}$ if and only if
$$
R(a)=\infty \;\;\text{ and }\;\; \lim\limits_{r\to\infty} w(r;a) = \sup\limits_{r\in(0,\infty)} w(r;a)= K^*.
$$

(b) If $J$ is a connected component of $\mathcal{A}$, then $\inf\,J\ge a_*>0$ and $\inf\,J\in\mathcal{B}$.

(c) If $J$ is a connected component of $\mathcal{C}$, then $\sup\,J\le a^*$ and $\sup\,J\in\mathcal{B}$.
\end{corollary}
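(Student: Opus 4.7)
I first observe that the bulk of this corollary is a topological consequence of Lemmas~\ref{lem.A1}--\ref{lem.C2}, with the only substantive analytic input being the identification of the limit of $w(\cdot;a)$ at infinity. My plan is therefore to (i) establish the characterization of elements of $\mathcal{B}$ in terms of their asymptotic behavior, (ii) deduce the inclusion $\mathcal{B}\subseteq[a_*,a^*]$, and (iii) locate elements of $\mathcal{B}$ as boundary points of the connected components of the open sets $\mathcal{A}$ and $\mathcal{C}$, which will also yield the non-emptiness claim in (a).

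For (a), if $a\in\mathcal{B}$ then by definition $w'(\cdot;a)>0$ on $(0,\infty)$, so $R(a)=\infty$, the function $w(\cdot;a)$ is strictly increasing, and the finite quantity $\sup w(\cdot;a)$ coincides with $l:=\lim_{r\to\infty} w(r;a)$. To identify $l$, I reuse the asymptotic argument appearing at the end of the proof of Lemma~\ref{lem.A1} (and again in Lemma~\ref{lem.C1}): \cite[Lemma~2.9]{IL13b} produces a sequence $(\varrho_k)$ along which $\varrho_k w'(\varrho_k;a)\to 0$ and $\varrho_k^2 w''(\varrho_k;a)\to 0$, and passing to the limit in~\eqref{ODEw} yields $\mu l(\mu K^*)^{q-p+1}=(\mu l)^{q-p+2}$, hence $l=K^*$. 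Conversely, $\sup w=K^*<\infty$ excludes $a\in\mathcal{C}$ by Lemma~\ref{lem.C1} and $a\in\mathcal{A}$ by Lemma~\ref{lem.A1}(iii), leaving $a\in\mathcal{B}$. The inclusion $\mathcal{B}\subseteq[a_*,a^*]$ then follows at once from the disjointness $\mathcal{A}\cap\mathcal{B}=\mathcal{B}\cap\mathcal{C}=\emptyset$ combined with $(0,a_*)\subseteq\mathcal{C}$ and $(a^*,\infty)\subseteq\mathcal{A}$ supplied by Lemmas~\ref{lem.A3} and~\ref{lem.C2}.

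For (b), given a connected component $J$ of $\mathcal{A}$, I set $a_1:=\inf J$. Since $\mathcal{A}\cap\mathcal{C}=\emptyset$ and $(0,a_*)\subseteq\mathcal{C}$, necessarily $a_1\ge a_*>0$. The openness of $\mathcal{A}$ (Lemma~\ref{lem.A3}) forbids $a_1\in\mathcal{A}$; and if $a_1$ were in the open set $\mathcal{C}$ (Lemma~\ref{lem.C2}), a full neighborhood of $a_1$ would sit inside $\mathcal{C}$ and thus be disjoint from $J\subseteq\mathcal{A}$, contradicting $a_1=\inf J$. Therefore $a_1\in\mathcal{B}$. Part (c) is proved by a symmetric argument, taking $a_2:=\sup J$ for a connected component $J$ of $\mathcal{C}$ and invoking $(a^*,\infty)\subseteq\mathcal{A}$ together with the openness of both $\mathcal{A}$ and $\mathcal{C}$. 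Since $\mathcal{A}\neq\emptyset$ by Lemma~\ref{lem.A3}, part (b) produces an explicit element of $\mathcal{B}$, thus settling its non-emptiness asserted in (a). The only step carrying any analytic content is the identification $l=K^*$, but as this is a direct recycling of the argument already developed in Lemma~\ref{lem.A1}, I do not anticipate any genuine obstacle.
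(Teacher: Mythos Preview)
Your proof is correct and follows essentially the same approach as the paper, which simply cites Lemmas~\ref{lem.A1}, \ref{lem.A3}, \ref{lem.C1}, and~\ref{lem.C2} for part~(a) and then argues as you do for~(b) and~(c). The one minor stylistic difference is that in~(b) the paper excludes $\inf J\in\mathcal{C}$ by invoking continuous dependence to show $w(r;\inf J)\le K^*$ for all $r$ and then applying Lemma~\ref{lem.C1}, whereas you reach the same conclusion via the openness of $\mathcal{C}$ directly; both routes are valid and equally short.
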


\begin{proof}
(a) This follows immediately from Lemmas~\ref{lem.A1}, \ref{lem.A3}, \ref{lem.C1} and~\ref{lem.C2}.

(b) Let $J$ be a connected component of $\mathcal{A}$. Then $\inf\,J\geq a_*>0$ by Lemma~\ref{lem.C2}, while $\inf\,J\notin\mathcal{A}$ since $\mathcal{A}$ is an open set according to Lemma~\ref{lem.A3}. Moreover, for any $a\in J$ and any $r\in[0,R(a))$, we have $w(r;a)<K^*$ by Lemma~\ref{lem.A1}~(iii), so that continuous dependence entails that
$$
w(r;\inf\,J)\leq K^* \qquad {\rm for} \ r\in[0,\infty),
$$
recalling that $R(\inf\,J)=\infty$ by Lemma~\ref{lem.A2}, since $\inf\,J\notin\mathcal{A}$. Consequently, Lemma~\ref{lem.C1} implies that $\inf\,J\notin\mathcal{C}$ and thus $\inf\,J\in\mathcal{B}$, as claimed.

(c) The proof is similar to that of (b) and we omit it here.
\end{proof}

We close this section with a technical result which gives further properties of $w(\cdot;a)$ and $f(\cdot;a)$ for $a\in\mathcal{B}$. This result will be very useful in the forthcoming sections.

\begin{lemma}\label{lem.B1}
Let $a\in\mathcal{B}$. Then
\begin{equation}\label{deriv.w}
\lim\limits_{r\to\infty}rw'(r;a)=0
\end{equation}
and
\begin{equation}\label{deriv.f}
\lim\limits_{r\to\infty}r^{\mu+1}f'(r;a)=-\mu K^*.
\end{equation}
\end{lemma}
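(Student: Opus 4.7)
My plan is to reduce both limits \eqref{deriv.w} and \eqref{deriv.f} to a single assertion, reformulate~\eqref{ODEw} as a one-dimensional, asymptotically autonomous equation for a new unknown $u$, and then combine a mean-value extraction with a trapping argument to conclude. Setting $W(r) := rw'(r;a) - \mu w(r;a)$, a direct computation from $w = r^\mu f$ gives $W(r) = r^{\mu+1} f'(r;a)$. Since $w(r;a) \to K^*$ by Corollary~\ref{cor.B}(a), the identities $rw' = W + \mu w$ and $r^{\mu+1}f' = W$ show that both~\eqref{deriv.w} and~\eqref{deriv.f} reduce to proving $W(r) \to -\mu K^*$. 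Using $w'>0$ on $(0,\infty)$ (since $a\in\mathcal{B}$) together with $f'<0$ on $(0,\infty)$ (from~\eqref{decr}), the function $u := -W = \mu w - rw'$ takes values in $(0, \mu K^*)$, so the goal becomes $u(r) \to \mu K^*$.

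Substituting $W = -u$ and $rw' = \mu w - u$ into~\eqref{ODEw}, and introducing $M := (p-1)(\mu+1) - N + 1 = (\mu K^*)^{q-p+1}$, a short algebraic manipulation transforms the equation into
\[
(p-1)\, r\, u'(r) \;=\; H(u(r)) + \varepsilon(r),
\]
with $H(u) := u\bigl(M - u^{q-p+1}\bigr)$ and $\varepsilon(r) := \beta\, r^\gamma\, u(r)^{2-p}(\mu w(r) - u(r))$. The key structural features are that $H \ge 0$ on $[0,\mu K^*]$, vanishing only at the endpoints, and that $\varepsilon(r)\to 0$ as $r\to\infty$, the latter because $\gamma = -1/\beta < 0$ while $u$, $w$, and $rw' = \mu w - u$ are all bounded on $(0,\infty)$.

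To prove $u(r) \to \mu K^*$, I first extract a convenient sequence: the monotonicity of $w \nearrow K^*$ combined with the mean value theorem on $[2^k, 2^{k+1}]$ produces $\xi_k$ in this interval satisfying $\xi_k w'(\xi_k) \le 2\bigl(w(2^{k+1}) - w(2^k)\bigr)$, which tends to $0$ as $k\to\infty$, whence $u(\xi_k) = \mu w(\xi_k) - \xi_k w'(\xi_k) \to \mu K^*$. A trapping argument then upgrades this sequential limit to the full one: for any $\eta \in (0, \mu K^*)$, the number $h_\eta := H(\mu K^* - \eta)$ is positive, and choosing $R_\eta$ large enough that $|\varepsilon(r)| < h_\eta$ for $r \ge R_\eta$, the ODE evaluated at any point $r \ge R_\eta$ where $u(r) = \mu K^* - \eta$ forces $u'(r) > 0$. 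Therefore $u$ cannot cross the level $\mu K^* - \eta$ downwards on $[R_\eta, \infty)$; since $u(\xi_k) > \mu K^* - \eta$ and $\xi_k > R_\eta$ for $k$ large enough, $u(r) > \mu K^* - \eta$ for all $r \ge \xi_k$. Combined with $u < \mu K^*$ and the arbitrariness of $\eta$, this yields $u(r) \to \mu K^*$, and hence both $rw'(r) = \mu w(r) - u(r) \to 0$ and $r^{\mu+1}f'(r) = -u(r) \to -\mu K^*$.

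The main obstacle I anticipate is finding the correct reformulation of~\eqref{ODEw} into the above asymptotically autonomous form: the splitting with a non-negative leading term $H$ whose only zeros on the relevant interval are $0$ and $\mu K^*$, and with a provably vanishing perturbation $\varepsilon$, is the heart of the argument. It crucially exploits $\gamma < 0$ (i.e.\ $\beta > 0$) and the boundedness of $u^{2-p}$ ensured by $p < 2$. Once this decomposition is in hand, the mean value plus trapping combination is fairly routine.
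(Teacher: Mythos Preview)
Your proof is correct and takes a genuinely different route from the paper's. The paper first bounds $rw'$ from above by invoking the gradient estimate~\eqref{elem2} (imported from \cite{IL12}), defines $L=\limsup_{r\to\infty} rw'(r)$, and assumes $L>0$ for contradiction; a Rolle-type argument produces a sequence $(r_k)$ with $z(r_k):=r_kw'(r_k)\to L$ and $z'(r_k)=0$, and evaluating~\eqref{ODEw} along $(r_k)$ yields an algebraic identity for $L$ which is then shown to have only the root $L=0$ via a convexity analysis of an auxiliary function $\varphi$. In contrast, you bypass the external gradient estimate entirely: the reduction to the scalar equation $(p-1)\,r\,u'=H(u)+\varepsilon(r)$ with $u=\mu w-rw'\in(0,\mu K^*)$ uses only $w'>0$, $f'<0$, and $w\nearrow K^*$ (all already available at this point), and the sign-definiteness of $H$ on $(0,\mu K^*)$ together with $\varepsilon\to 0$ makes the trapping argument transparent. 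Your approach is thus more self-contained and arguably cleaner; the paper's approach, on the other hand, illustrates how the optimal gradient bound~\eqref{elem2} feeds back into the ODE analysis. A minor observation: since in fact $\varepsilon(r)=\beta r^\gamma u^{2-p}(rw')>0$, your ODE gives $u'>0$ everywhere, so $u$ is monotone and the trapping step could be shortened to simply identifying the limit via the sequential argument; but the argument as written is already complete.
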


\begin{proof}
On the one hand, according to the definition of $\mathcal{B}$, we have
\begin{equation}\label{interm12}
rw'(r;a)=\mu w(r;a)+r^{\mu+1}f'(r;a)\leq\mu K^*+r^{\mu+1}|f'(r;a)|, \qquad r\geq 0.
\end{equation}
On the other hand, combining the gradient estimate~\eqref{elem2} with Corollary~\ref{cor.B} gives, for $r>0$,
\begin{align*}
	|f'(r)| & \le \mu \kappa_0 \left( 1 + \|f\|_\infty^{1/(\alpha p)} \right) f(r)^{(\mu+1)/\mu} \\
	& \le \mu \kappa_0 \left( 1 + \|f\|_\infty^{1/(\alpha p)} \right) (K^*)^{(\mu+1)/\mu} r^{-1-\mu},
\end{align*}
and we further find by replacing the last estimate into~\eqref{interm12} that
\begin{equation*}
rw'(r;a)\leq\mu K^* + \mu \kappa_0 \left( 1 + \|f\|_\infty^{1/(\alpha p)} \right) (K^*)^{(\mu+1)/\mu}.
\end{equation*}
Consequently,
\begin{equation}
	L := \limsup\limits_{r\to\infty} rw'(r) \in [0,\infty), \label{p01}
\end{equation}
while the finiteness of the limit of $w(r)$ as $r\to\infty$ stated in Corollary~\ref{cor.B} implies that
\begin{equation}
	\liminf\limits_{r\to 0} r w'(r) = 0. \label{p02}
\end{equation}
Assume now for contradiction that $L>0$. Setting $z(r):=rw'(r)$ for $r\geq 0$, there is an increasing sequence $(\bar{r}_k)_{k\ge 1}$ such that $\bar{r}_k\to\infty$ as $k\to\infty$ and $z(\bar{r}_k)=L/2$ for all $k\ge 1$. Rolle's theorem then entails the existence of an increasing sequence $(r_k)_{k\geq1}$ such that $r_k\to\infty$ as $k\to\infty$, $z(r_k)\to L$ as $k\to\infty$, and $z'(r_k)=0$ for all $k\ge 0$. Since $r^2w''(r)=rz'(r)-z(r)$, one has $r_k^2w''(r_k)=-z(r_k)$ and it follows by evaluating~\eqref{ODEw} at $r=r_k$ that
\begin{align*}
[N-1-(2\mu+1)(p-1)]z(r_k)&+\mu(\mu K^*)^{q-p+1}w(r_k)\\
&+|W(r_k)|^{2-p}\left[\beta r_k^{\gamma}z(r_k)-|W(r_k)|^q\right]=0.
\end{align*}
Letting $k\to\infty$ and taking into account that $a\in\mathcal{B}$ and $\gamma<0$, we conclude that
\begin{equation}\label{interm14}
[N-1-(2\mu+1)(p-1)]L+(\mu K^*)^{q-p+2}-|L-\mu K^*|^{q-p+2}=0.
\end{equation}
Now, introducing the function,
$$
\varphi(x):=|x-\mu K^*|^{q-p+2}-(\mu K^*)^{q-p+2}+[(2\mu+1)(p-1)-N+1]x, \qquad x\geq 0,
$$
and recalling that $q-p+2>1$, we notice that $\varphi$ is a convex function, so that $\varphi'$ is increasing. Moreover, since $\mu>N\ge 1$,
\begin{align*}
\varphi'(0)&=-(q-p+2)(\mu K^*)^{q-p+1}+\mu(p-1)+(\mu+1)(p-1)-N+1\\
&=\mu(p-1)-(q-p+2)[(\mu+1)(p-1)-N+1]+(\mu+1)(p-1)-N+1\\&=\mu(p-1)-(q-p+1)[(\mu+1)(p-1)-N+1]\\
&=(p-1)[\mu-(\mu+1)(q-p+1)]+(q-p+1)(N-1)\\&=(p-1)(\mu-1)+(q-p+1)(N-1)>0.
\end{align*}
Therefore, $\varphi'(x)\geq0$ for any $x\geq0$, hence $\varphi$ is increasing on $[0,\infty)$ with $\varphi(0)=0$ and this shows that the only solution to \eqref{interm14} is $L=0$, and a contradiction. Therefore, $L=0$ and~\eqref{deriv.w} follows from~\eqref{p01} and~\eqref{p02}.

\medskip

In order to prove now~\eqref{deriv.f}, we recall that
$$
rw'(r)=r(r^{\mu}f(r))'=\mu w(r)+r^{\mu+1}f'(r)
$$
and we immediately deduce \eqref{deriv.f} by passing to the limit as $r\to\infty$, taking into account \eqref{deriv.w} and the fact that $a\in\mathcal{B}$.
\end{proof}

\section{An auxiliary dynamical system. Refined behavior as $r\to\infty$}\label{sec.dynamic}

Once established the existence of elements in $\mathcal{B}$, and thus, of self-similar solutions to Eq.~\eqref{eq1} with the fast decay~\eqref{fast.decay} as $r\to\infty$, the aim of this longer and rather technical section is to compute the second order of their behavior as $r\to\infty$. This step is of independent interest and leads to the decay rate~\eqref{tail.SSS} as $r\to\infty$, thereby completing the proof of Theorem~\ref{th.dimN}. It also comes decisively into play in the proof of the uniqueness part in Theorem~\ref{th.dim1}. The proof is based on a fine analysis of an auxiliary dynamical system obtained from~\eqref{SSODE} by performing a suitable transformation, see Section~\ref{subsec.dynamic} below. But for the time being, we begin with a formal deduction of the expansion~\eqref{tail.SSS}.

\subsection{A formal deduction of the next order}\label{subsec.formal}

Let us recall here that, for $a\in\mathcal{B}$, we have proved in Corollary~\ref{cor.B} that $0<w(r;a)<K^*$ for $r\in(0,\infty)$ and $\lim\limits_{r\to\infty}w(r;a)=K^*$. We aim at finding the second order in this expansion for $a\in\mathcal{B}$, and thus insert the ansatz
\begin{equation}\label{sec.ord}
w(r;a)\sim K^*-Ar^{-\theta}, \qquad {\rm as} \ r\to\infty
\end{equation}
with yet undetermined exponent $\theta$ and constant $A>0$, into the equation~\eqref{ODEw} solved by $w(\cdot;a)$. As we are at a formal level, it is also expected to have
$$
rw'(r;a)\sim\theta Ar^{-\theta}, \qquad r^2w''(r;a)\sim-\theta(\theta+1)Ar^{-\theta}, \qquad {\rm as} \ r\to\infty.
$$
We thus substitute these expansions in~\eqref{ODEw} and obtain
\begin{align*}
o(r^{-\theta})&=-(p-1)\theta(\theta+1)Ar^{-\theta}+(N-1-2\mu(p-1))\theta Ar^{-\theta}+ (\mu K^*)^{q-p+2}\\
& \quad -A\mu[(p-1)(\mu+1)-(N-1)]r^{-\theta}+|\theta Ar^{-\theta}-\mu K^*+\mu Ar^{-\theta}|^{2-p}\\
&\quad\qquad \times\left[\beta r^{\gamma}\theta Ar^{-\theta}-|\theta Ar^{-\theta}-\mu K^*+\mu Ar^{-\theta}|^{q}\right].
\end{align*}
Introducing
\begin{equation*}
	B := (\theta+\mu)(N-1)-2\mu(p-1)\theta-(p-1)\theta(\theta+1)-\mu(p-1)(\mu+1)
\end{equation*}
and using the fact that $\gamma=(2q-p)/(q-p+1)<0$ in order to notice that $r^{-\theta+\gamma}$ is a smaller order term, we further obtain
\begin{align*}
o(r^{-\theta}) &=ABr^{-\theta} +(\mu K^*)^{q-p+2}\\
& \quad +(\mu K^*)^{q-p+2}\left(1-\frac{A(\theta+\mu)}{\mu K^*}r^{-\theta}\right)^{2-p}\left[\frac{\beta\theta A}{(\mu K^*)^q}r^{-\gamma-\theta}-\left(1-\frac{A(\theta+\mu)}{\mu K^*}r^{-\theta}\right)^q\right]\\
&=ABr^{-\theta} +(\mu K^*)^{q-p+2}\\
&\quad +(\mu K^*)^{q-p+2}\left(1-\frac{A(\theta+\mu)(2-p)}{\mu K^*}r^{-\theta}\right)\left(-1+\frac{A(\theta+\mu)q}{\mu K^*}r^{-\theta}\right)\\
&=AB r^{-\theta} +(\mu K^*)^{q-p+2}\left[1-1+\frac{A(q-p+2)(\theta+\mu)}{\mu K^*}r^{-\theta}\right],
\end{align*}
Therefore, the constant $A>0$ can be arbitrary, while $\theta$ should solve the algebraic equation
\begin{equation*}
\begin{split}
(\theta+\mu)(N-1)&-2\mu(p-1)\theta-(p-1)\theta(\theta+1)-\mu(p-1)(\mu+1)\\
&+(\mu K^*)^{q-p+1}(q-p+2)(\theta+\mu)=0.
\end{split}
\end{equation*}
Since
\begin{equation*}
	-2\mu(p-1)\theta-(p-1)\theta(\theta+1)-\mu(p-1)(\mu+1) = -(p-1) (\theta+\mu) (\theta+\mu+1)
\end{equation*}
and $\theta$ is expected to be positive, we may factor out $\theta+\mu$ in the previous identity. Then,
\begin{equation*}
	N-1 - (p-1)(\theta+\mu+1) +(\mu K^*)^{q-p+1}(q-p+2) = 0,
\end{equation*}
and, taking into account that $(\mu K^*)^{q-p+1}=(p-1)(\mu+1)-(N-1)$ by~\eqref{const}, we obtain
\begin{equation*}
	\theta = \frac{N(p-1)- q(N-1)}{p-1}
\end{equation*}
as stated in Theorem~\ref{th.dimN}. As a final remark, it is also interesting to notice that $\theta=1$ if $N=1$ and $\theta\in(0,1)$ if $N\geq2$ due to~\eqref{exp.eq1}.

\subsection{A dynamical system}\label{subsec.dynamic}

In order to establish the expansion~\eqref{sec.ord} (and thus~\eqref{tail.SSS} as an immediate consequence) in a rigorous way, we need to work with an auxiliary dynamical system. We thus go back to $f(\cdot;a)$ with $a\in\mathcal{B}$ and introduce the following transformation, inspired partially by the one used in \cite[Section~4.2]{ISV08}: set $\eta(r)=\ln\,r\in\real$ as a new independent variable and
\begin{equation}\label{change.var}
\left\{\begin{array}{l}
	X(\cdot;a)\circ\eta(r) :=-r(|f'|^{-p}f'f)(r;a),\\
	\\
	Y(\cdot;a)\circ \eta(r) :=-r^2(|f'|^{1-p}f')(r;a),\\
	 \\
	 Z(\cdot;a)\circ \eta(r) :=r^{(p-2q)/(2-p)}|Y(\cdot;a)\circ \eta(r)|^{(q-1)/(2-p)} Y(\cdot;a)\circ\eta(r).
\end{array}\right.
\end{equation}
Observe that the positivity of $f$ and the non-positivity of $f'$ provided by Lemma~\ref{lem.1}, along with~\eqref{change.var}, entail that
\begin{equation}
	\big(X(\eta(r);a),Y(\eta(r);a),Z(\eta(r);a)\big)\in [0,\infty)^3, \qquad r\in (0,\infty). \label{p04}
\end{equation}

\begin{lemma}\label{lem.dynamic}
For $a\in \mathcal{B}$, the functions $\big(X(\cdot;a)\circ\eta,Y(\cdot;a)\circ\eta,Z(\cdot;a)\circ\eta \big)$ solve the following autonomous dynamical system
\begin{equation}\label{syst}
\left\{\begin{array}{l}
\dot{X}=NX-Y-\alpha X^2+\beta XY+XZ,\\[1mm]
\\
\displaystyle{\dot{Y}=\left(2-\frac{(2-p)(N-1)}{p-1}\right)Y+\frac{2-p}{p-1}(\alpha X-\beta Y-Z)Y,}\\[1mm]
\\
\displaystyle{\dot{Z} =\frac{q-p+1}{p-1}Z(Z_*-Z)+\frac{q-p+1}{p-1}(\alpha X-\beta Y)Z,}
\end{array}\right.
\end{equation}
with
\begin{equation}\label{zstar}
	Z_*:=\frac{p-1}{q-p+1}-N+1=\frac{N(p-1)-q(N-1)}{q-p+1}>0,
\end{equation}
where the dot denotes the derivative with respect to $\eta$ and we have dropped the explicit dependence on $\eta$ and $a$ for simplicity.
\end{lemma}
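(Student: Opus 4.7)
The plan is to verify~\eqref{syst} by direct computation in three preparatory steps: (i) rewrite $(X,Y,Z)$ without the sign-sensitive products of~\eqref{change.var}; (ii) use~\eqref{SSODE} to eliminate the second derivative $f''$; and (iii) convert $r$-derivatives into $\eta$-derivatives via $\dot{(\cdot)} = r(\cdot)'_r$.

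Since $a\in\mathcal{B}$ ensures $f>0$ and $f'<0$ on $(0,\infty)$ by Lemma~\ref{lem.1} and Corollary~\ref{cor.B}, I would first substitute $|f'|=-f'$ in~\eqref{change.var} to obtain the manifestly positive expressions
\[
X = \frac{rf}{|f'|^{p-1}}, \qquad Y = r^2|f'|^{2-p}, \qquad Z = r|f'|^{q-p+1},
\]
from which the two auxiliary identities $Y/X = -rf'/f$ and $(|f'|)'/|f'| = f''/f'$ follow at once.

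Next, I would isolate $f''$ in~\eqref{SSODE} by expanding $(|f'|^{p-2}f')' = (p-1)|f'|^{p-2}f''$ and dividing the equation by $(p-1)|f'|^{p-2}f'$. Each of the three nonlinear contributions of~\eqref{SSODE} turns, after this normalization, into a multiple of $X$, $Y$ or $Z$, giving
\[
\frac{rf''}{f'} = -\frac{N-1}{p-1} + \frac{\alpha X - \beta Y - Z}{p-1}.
\]
With this in hand, I would take logarithmic $r$-derivatives of $X$, $Y$, $Z$, multiply by $r$, and land on
\[
\frac{\dot X}{X} = 1 + \frac{rf'}{f} + (1-p)\frac{rf''}{f'}, \qquad \frac{\dot Y}{Y} = 2 + (2-p)\frac{rf''}{f'}, \qquad \frac{\dot Z}{Z} = 1 + (q-p+1)\frac{rf''}{f'}.
\]
Substituting the formula for $rf''/f'$ and the relation $rf'/f = -Y/X$, then multiplying respectively by $X$, $Y$, $Z$, recovers the first two equations of~\eqref{syst} immediately. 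For the third one, the constant term that emerges from the computation is $1 - (q-p+1)(N-1)/(p-1)$, which by the very definition~\eqref{zstar} of $Z_*$ equals $(q-p+1)Z_*/(p-1)$, yielding the stated form $\dot Z = \frac{q-p+1}{p-1}Z(Z_*-Z) + \frac{q-p+1}{p-1}(\alpha X-\beta Y)Z$.

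No serious obstacle arises: the argument is purely algebraic, and the only technical care required is tracking the signs inherited from $f'<0$ when writing $|f'|^{p-2}f' = -|f'|^{p-1}$ and $(|f'|)' = -f''$, and recognizing that the constant appearing in $\dot Z/Z$ encodes exactly the quantity $Z_*$ rather than an independent parameter.
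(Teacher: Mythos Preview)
Your proof is correct and actually more streamlined than the paper's. The paper proceeds by inverting~\eqref{change.var} to express $|f'|$, $f'$ and $(|f'|^{p-2}f')'$ in terms of $Y$ (equations~\eqref{interm9}--\eqref{interm10}), substituting these into~\eqref{SSODE} to obtain~\eqref{interm11}, and then recognising the remaining terms as $X$ and $Z$ via auxiliary identities such as~\eqref{interm16}; it then repeats an analogous chain starting from $X$ (equations~\eqref{p03}--\eqref{interm15}) to derive the $\dot X$-equation, and finally differentiates the definition of $Z$ in terms of $Y$ to obtain the third equation. Your route instead exploits the multiplicative structure of $X,Y,Z$ as monomials in $r$, $f$, $|f'|$: logarithmic differentiation reduces all three computations to a single common ingredient $rf''/f'$, which you extract once and for all from~\eqref{SSODE}. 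This avoids the repeated back-substitutions and the need to recognise $X$, $Y$, $Z$ inside composite expressions, at the modest cost of requiring $f'\ne 0$ throughout (which, as you note, is guaranteed by Lemma~\ref{lem.1} for $a\in\mathcal{B}$). Both arguments are purely algebraic and reach the same system, but yours is considerably shorter.
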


\begin{proof}
The proof is performed by direct calculations. However, since these calculations are a bit tedious, we include them in some detail below for the reader's convenience. We start from the definition of $Y\circ\eta$ and find that
\begin{equation}\label{interm9}
\begin{split}
&|f'(r)|=r^{-2/(2-p)}|(Y\circ\eta)(r)|^{1/(2-p)}, \\ &f'(r)=-r^{-2/(2-p)}|(Y\circ\eta)(r)|^{(p-1)/(2-p)}(Y\circ\eta)(r).
\end{split}
\end{equation}
Then
\begin{equation}\label{interm10bis}
(|f'|^{p-2}f')(r)=-r^{-2(p-1)/(2-p)}|(Y\circ\eta)(r)|^{(2p-3)/(2-p)}(Y\circ\eta)(r)
\end{equation}
and differentiating once more with respect to $r$ gives, taking into account that $\eta'(r)=1/r$,
\begin{equation}\label{interm10}
\begin{split}
(|f'|^{p-2}f')'(r)&=\frac{2(p-1)}{2-p}r^{-p/(2-p)}|(Y\circ\eta)(r)|^{(2p-3)/(2-p)}(Y\circ\eta)(r)\\
&-\frac{p-1}{2-p}r^{-p/(2-p)}|(Y\circ\eta)(r)|^{(2p-3)/(2-p)}(\dot{Y}\circ\eta)(r).
\end{split}
\end{equation}
Next, on the one hand, we replace the terms involving $f'$ in~\eqref{SSODE} with their formulas given in~\eqref{interm9}, \eqref{interm10bis} and~\eqref{interm10}, and on the other hand, we separate in the left-hand side the term featuring $\dot{Y}$. After some direct calculations, we find
\begin{equation}\label{interm11}
\begin{split}
\frac{p-1}{2-p}\dot{Y}(\eta)&=\left(\frac{2(p-1)}{2-p}-(N-1)\right)Y(\eta)+\alpha r^{p/(2-p)}f(r)|Y(\eta)|^{-(2p-3)/(2-p)}\\
&-\beta|Y(\eta)|Y(\eta)-r^{(p-2q)/(2-p)}|Y(\eta)|^{(q-2p+3)/(2-p)},
\end{split}
\end{equation}
where we have employed the notation $Y(\eta):=Y\circ\eta$. Recalling now the definition of $X(\eta)$ and $Z(\eta)$ (with the same convention of notation) from \eqref{change.var}, and taking into account \eqref{interm9} and the positivity of $f$, we readily observe that, on the one hand,
\begin{equation}\label{interm16}
\begin{split}
|Y(\eta)|^{(1-p)/(2-p)}&=r^{2(1-p)/(2-p)}|f'(r)|^{1-p}=r^{2(1-p)/(2-p)}\frac{|X(\eta)|}{rf(r)}\\
&=r^{-p/(2-p)}\frac{|X(\eta)|}{f(r)},
\end{split}
\end{equation}
so that, by~\eqref{p04},
\begin{align*}
	r^{p/(2-p)}f(r)|Y(\eta)|^{-(2p-3)/(2-p)} & =r^{p/(2-p)}f(r)|Y(\eta)|^{(1-p)/(2-p)}|Y(\eta)| \\
	& =|X(\eta)||Y(\eta)|= X(\eta) Y(\eta),
\end{align*}
while, on the other hand,
\begin{align*}
	r^{(p-2q)/(2-p)}|Y(\eta)|^{(q-2p+3)/(2-p)} & = |Y(\eta)|r^{(p-2q)/(2-p)}|Y(\eta)|^{(q-p+1)/(2-p)} \\
	& =|Y(\eta)||Z(\eta)|=Y(\eta) Z(\eta).
\end{align*}
Putting these last calculations together, we arrive to the following autonomous equation for $Y$:
\begin{equation}\label{syst.Y}
\begin{split}
\frac{p-1}{2-p}\dot{Y}(\eta) & = \left[\frac{2(p-1)}{2-p}-N+1\right] Y(\eta) + \alpha X(\eta)Y(\eta) \\
& \quad -\beta|Y(\eta)|Y(\eta)- Y(\eta)Z(\eta).
\end{split}
\end{equation}
Following similar ideas, we deduce from the definition of $X\circ\eta$ in \eqref{change.var} that
\begin{equation}\label{p03}
	f'(r)=-(rf(r))^{1/(p-1)}|(X\circ\eta)(r)|^{-p/(p-1)}(X\circ\eta)(r),
\end{equation}
hence
\begin{equation}\label{interm15bis}
|f'(r)|^{p-2}f'(r)=-rf(r)|(X\circ\eta)(r)|^{-2}(X\circ\eta)(r).
\end{equation}
By taking derivatives with respect to $r$ and taking once more into account that $\eta'(r)=1/r$, we obtain
\begin{equation}\label{interm15}
\begin{split}
(|f'|^{p-2}f')'(r)&=-f(r)|(X\circ\eta)(r)|^{-2}(X\circ\eta)(r)\\
&\quad +r(rf(r))^{1/(p-1)}|(X\circ\eta)(r)|^{-p/(p-1)}\\
&\quad +f(r)|(X\circ\eta)(r)|^{-2}(\dot{X}\circ\eta)(r).
\end{split}
\end{equation}
We then replace the terms involving $f'$ in~\eqref{SSODE} by the expressions given in~\eqref{p03}, \eqref{interm15bis} and~\eqref{interm15}. Setting again $X(\eta):=X\circ\eta$, we obtain after straightforward calculations that
\begin{equation*}
\begin{split}
\dot{X}(\eta)&=NX(\eta)-\alpha|X(\eta)|^2-r^{p/(p-1)}f(r)^{(2-p)/(p-1)}|X(\eta)|^{(p-2)/(p-1)}\\
&\quad +\beta r^{p/(p-1)}f(r)^{(2-p)/(p-1)}|X(\eta)|^{(p-2)/(p-1)}X(\eta)\\
&\quad +r^{q/(p-1)}f(r)^{(q-p+1)/(p-1)}|X(\eta)|^{(2p-2-q)/(p-1)}.
\end{split}
\end{equation*}
We notice again from~\eqref{interm16} that, on the one hand,
$$
r^{p/(p-1)}f(r)^{(2-p)/(p-1)}|X(\eta)|^{(p-2)/(p-1)}=|Y(\eta)|.
$$
On the other hand, using once more~\eqref{interm16}, we can write
\begin{align*}
& r^{q/(p-1)}f(r)^{(q-p+1)/(p-1)}|X(\eta)|^{(p-1-q)/(p-1)} \\
& \hspace{1cm} =r^{q/(p-1)}\left[r^{-p/(2-p)}|Y(\eta)|^{(p-1)/(2-p)}\right]^{(q-p+1)/(p-1)}\\
& \hspace{1cm} =r^{q/(p-1)+p(p-q-1)/(p-1)(2-p)}|Y(\eta)|^{(q-p+1)/(2-p)}\\
& \hspace{1cm}  =r^{(p-2q)/(2-p)}|Y(\eta)|^{(q-p+1)/(2-p)}=|Z(\eta)|.
\end{align*}
Using again~\eqref{p04}, we thus obtain an autonomous equation for $X$, given by
\begin{equation}\label{syst.X}
\dot{X}(\eta)=NX(\eta)-|Y(\eta)|-\alpha X(\eta)^2+\beta X(\eta)|Y(\eta)|+X(\eta)Z(\eta).
\end{equation}
We are left with deriving a differential equation for $Z$. By~\eqref{change.var} and~\eqref{interm11}, we have
\begin{equation*}
\begin{split}
\dot{Z}(\eta)&=\frac{p-2q}{2-p}Z(\eta)+\frac{q-p+1}{2-p}r^{(p-2q)/(2-p)}|Y(\eta)|^{(q-1)/(2-p)}\dot{Y}(\eta)\\
&=\frac{p-2q}{2-p}Z(\eta)+\frac{q-p+1}{p-1}r^{(p-2q)/(2-p)}|Y(\eta)|^{(q-1)/(2-p)}\\
&\quad \times\left[\left(\frac{2(p-1)}{2-p}-(N-1)\right)Y(\eta)-\beta|Y(\eta)|Y(\eta)+\alpha X(\eta)Y(\eta)-Y(\eta)Z(\eta)\right],
\end{split}
\end{equation*}
and taking into account the definition of $Z(\eta)$, we finally obtain
\begin{equation}\label{syst.Z}
\begin{split}
\dot{Z}(\eta)&=\frac{q-p+1}{p-1}Z(\eta)\left[Z_*-Z(\eta)\right]\\
&\quad +\frac{q-p+1}{p-1}\left[\alpha X(\eta)-\beta|Y(\eta)|\right] Z(\eta),
\end{split}
\end{equation}
where $Z_*$ is defined in~\eqref{zstar}. Gathering equations~\eqref{syst.Y}, \eqref{syst.X} and~\eqref{syst.Z} and using~\eqref{p04}, we obtain the autonomous system~\eqref{syst}, as claimed.
\end{proof}

Recalling that $a\in\mathcal{B}$ and taking into account the behavior of $f(r)$ and $f'(r)$ as $r\to\infty$ given in Corollary~\ref{cor.B} and~\eqref{deriv.f}, we infer from~\eqref{exp.eq1} and~\eqref{change.var} that
$$
(X\circ\eta)(r)\sim(\mu K^*)^{1-p}K^*r^{1+(\mu+1)(p-1)-\mu}=(\mu K^*)^{1-p}K^*r^{-(p-2q)/(q-p+1)}\to 0
$$
as $r\to\infty$, since $(p-2q)/(q-p+1)>0$. In a similar manner,
$$
(Y\circ\eta)(r)\sim(\mu K^*)^{2-p}r^{2-(2-p)(\mu+1)}=(\mu K^*)^{2-p}r^{-(p-2q)/(q-p+1)}\to 0
$$
as $r\to\infty$, while
$$
(Z\circ\eta)(r)\sim(\mu K^*)^{q-p+1}=\frac{p-1}{q-p+1}-N+1=Z_* \qquad {\rm as} \ r\to\infty.
$$
Thus, any profile $f(\cdot;a)$ with $a\in\mathcal{B}$ is mapped by the transformation~\eqref{change.var} into a complete orbit $\big(X,Y,Z)(\cdot;a)$ of the system~\eqref{syst} in $[0,\infty)^3$, see~\eqref{p04}, and converging as $\eta\to\infty$ to the critical point
\begin{equation*}
P_0=\left(0,0,Z_*\right).
\end{equation*}
We are now in a position to move to dynamical systems techniques and carefully analyze the stable manifold in a neighborhood of the critical point $P_0$. This is the goal of the next section.

\subsection{Analysis of a stable manifold. Proof of Theorem \ref{th.dimN}}\label{subsec.stable}

We proceed in this section with the analysis of the stable manifold of the critical point $P_0$. Even though we are only interested in orbits coming through the transformation~\eqref{change.var} from profiles $f(\cdot;a)$ with $a\in\mathcal{B}$, we actually study the asymptotic behavior of arbitrary orbits of~\eqref{syst} lying in the stable manifold of $P_0$. More precisely, we start by translating $P_0$ to the origin $\mathbf{0}=(0,0,0)$ of $\real^3$ and introduce $W:=Z-Z_*$ and $\mbx=(X,Y,W)$. Then the system~\eqref{syst} reads
\begin{equation}\label{syst2}
	\dot{\mbx} = \mathbf{F}(\mbx),
\end{equation}
where
\begin{align*}
F_1(\mbx) &:= (N+Z_*)X-Y-\alpha X^2+\beta XY+XW,\\
F_2(\mbx) &:= -\frac{(p-2q)}{q-p+1} Y+\frac{2-p}{p-1}Y(\alpha X-\beta Y-W),\\
F_3(\mbx) &:=\frac{q-p+1}{p-1} Z_* (\alpha X - \beta Y -W) + \frac{q-p+1}{p-1} W(\alpha X - \beta Y -W).
\end{align*}
In order to simplify the notation, we set throughout the remaining part of this section
\begin{equation*}
	\nu:=\frac{q-p+1}{p-1}, \qquad \zeta:=N(p-1)+qZ_*.
\end{equation*}
With this notation, the matrix of the linearization of~\eqref{syst2} near the origin is
\begin{equation*}
	D\mathbf{F}(\mathbf{0})=\begin{pmatrix}
    	N+Z_* & -1 & 0 \\[1mm]
        0 & \displaystyle{-\frac{p-2q}{q-p+1}} & 0 \\[1mm]
        \alpha\nu Z_* & -\beta\nu Z_* & -\nu Z_* \\
	\end{pmatrix}
\end{equation*}
with eigenvalues
\begin{equation}\label{eigenvalues}
\lambda_1:=N+Z_*>0, \qquad \lambda_2:=-\frac{p-2q}{q-p+1}<0, \qquad \lambda_3:=-\nu Z_*<0,
\end{equation}
and corresponding eigenvectors
\begin{equation}\label{eigenvectors}
\mathbf{V}_1:=(\zeta,0,\alpha(q+1-p)Z_*), \qquad \mathbf{V}_2:=(q-p+1,p-q,0), \qquad \mathbf{V}_3:=(0,0,1).
\end{equation}
We thus find that $\mathbf{0}$ is a saddle point of~\eqref{syst2} with a one-dimensional unstable manifold and a two-dimensional stable manifold $\mathcal{W}_s(\mathbf{0})$, and we focus on the dynamics of the orbits belonging to the latter. To proceed further, we denote the semiflow associated to~\eqref{syst2} by $\mathbf{\Phi}$; that is, given $\mbx_0\in \real^3$, $\mathbf{\Phi}(\cdot;\mbx_0)$ is the unique solution to
\begin{equation*}
	\begin{split}
		\dot{\mathbf{\Phi}}(\eta;\mbx_0) & = F(\mathbf{\Phi}(\eta;\mbx_0)), \qquad \eta\in \mathcal{J}_{\mbx_0}, \\
		\mathbf{\Phi}(0;\mbx_0) & = \mbx_0,
	\end{split}
\end{equation*}
defined on a maximal open interval $\mathcal{J}_{\mbx_0}\subset\real$ with $0\in \mathcal{J}_{\mbx_0}$. Since $\mathbf{F}$ is a quadratic polynomial, we point out that $\Phi(\cdot;\mbx_0)\in C^\infty\left( \mathcal{J}_{\mbx_0};\real^3 \right)$.

Since $\mathbf{0}$ is an hyperbolic point for~\eqref{syst2}, it follows from the proof of the stable manifold theorem (see for example \cite[Theorem~19.11]{Amann}) that there exist an open neighborhood $\mathcal{V}\subset\real^3$ of $\mathbf{0}$, two open neighborhoods $\mathcal{U}_0\subset \mathcal{U}\subset\real^2$ of $(0,0)$, and $h\in C^\infty(\mathcal{U},\real)$ such that $h(0,0)=Dh(0,0)=0$ and the local stable manifold
\begin{subequations}\label{p07}
\begin{equation}
\mathcal{W}_s^{\mathcal{V}}(\mathbf{0}) := \left\{ \mbx_0 \in \mathcal{W}_s(\mathbf{0})\ :\ \mathbf{\Phi}(\eta;\mbx_0) \in \mathcal{V} \;\text{ for all }\; \eta\ge 0 \right\} \label{p07a}
\end{equation}
satisfies
\begin{equation}
	\mathcal{W}_s^{\mathcal{V}}(\mathbf{0})  \subset \left\{ u \mathbf{V}_2 + v \mathbf{V}_3 + h(u,v) \mathbf{V}_1\ :\ (u,v)\in\mathcal{U} \right\} \label{p07b}
\end{equation}
and
\begin{equation}
	\left\{ u \mathbf{V}_2 + v \mathbf{V}_3 + h(u,v) \mathbf{V}_1\ :\ (u,v)\in\mathcal{U}_0 \right\} \subset \mathcal{W}_s^{\mathcal{V}}(\mathbf{0}). \label{p07c}
\end{equation}
\end{subequations}
Consider now $\mbx_0\in \mathcal{W}_s^{\mathcal{V}}(\mathbf{0})$. Then $[0,\infty)\subset \mathcal{J}(\mbx_0)$, $\mathbf{\Phi}(\eta;\mbx_0)\in \mathcal{W}_s^{\mathcal{V}}(\mathbf{0})$ for all $\eta\ge 0$ and we infer from this invariance property and~\eqref{p07} that there are functions $(U,V):[0,\infty)\to\mathcal{U}$ such that $(X,Y,W):= \mathbf{\Phi}(\cdot;\mbx_0)$ satisfies
\begin{equation}\label{decomp}
(X,Y,W)(\eta)  = U(\eta) \mathbf{V}_2 + V(\eta) \mathbf{V}_3 + h(U(\eta),V(\eta)) \mathbf{V}_1, \qquad \eta\ge 0.
\end{equation}
We readily deduce from~\eqref{decomp} that
\begin{subequations}\label{stable.man}
\begin{equation}\label{eq.X}
	X=(q-p+1)U+\zeta h(U,V),
\end{equation}
\begin{equation}\label{eq.Y}
	Y=(p-q)U,
\end{equation}
\begin{equation}\label{eq.W}
	W=V+\alpha(q-p+1)Z_*h(U,V),
\end{equation}
whence
\begin{equation}
	V = - \frac{\alpha (q-p+1) Z_*}{\zeta} X + \frac{\alpha (q-p+1)^2 Z_*}{(p-q)\zeta} Y + W. \label{eq.S}
\end{equation}
\end{subequations}
Observe that~\eqref{eq.Y} and~\eqref{eq.S} ensure that $U$ and $V$ both belong to $C^\infty([0,\infty))$ which, in turn, implies that $h(U,V)\in C^\infty([0,\infty))$ as a consequence of the regularity of $h$. To determine the behavior of $(X,Y,W)$, we set
\begin{equation}
	H  :=h(U,V), \qquad	G :=\alpha X - \beta Y - W = -V + \frac{\alpha q}{\nu} H. \label{p08}
\end{equation}

\begin{lemma}\label{lem.stable}
Let $\mbx_0\in\mathcal{W}_s^{\mathcal{V}}(\mathbf{0})$. Then the functions $(U,V)$ defined in~\eqref{decomp} solve
\begin{equation}\label{eq.U}
\dot{U}=\lambda_2U+\frac{2-p}{p-1}UG,
\end{equation}
and
\begin{equation}\label{eq.V}
\dot{V}=\lambda_3V+\frac{\alpha\nu(q-p+1)Z_*}{\zeta}UG+\nu VG+\alpha\nu qZ_*GH,
\end{equation}
in $(0,\infty)$, where $\lambda_2$ and $\lambda_3$ are the eigenvalues defined in~\eqref{eigenvalues}.
\end{lemma}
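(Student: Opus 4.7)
The plan is to exploit the fact that, by~\eqref{decomp}, the triple $(U,V,H)$ (with $H := h(U,V)$) is nothing but the set of coordinates of $\mbx = (X,Y,W)$ in the eigenbasis $(\mathbf{V}_2,\mathbf{V}_3,\mathbf{V}_1)$ of $D\mathbf{F}(\mathbf{0})$. Writing the system~\eqref{syst2} in the form $\dot{\mbx} = D\mathbf{F}(\mathbf{0})\mbx + \mathbf{N}(\mbx)$ with $\mathbf{N}$ the quadratic part, the linear dynamics in the new coordinates therefore diagonalizes automatically into $\lambda_2 U$, $\lambda_3 V$, $\lambda_1 H$, and only the projection of $\mathbf{N}$ onto each eigendirection needs to be computed and re-expressed in terms of $(U,V,H)$ via~\eqref{stable.man}. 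A direct inspection of the definition of $\mathbf{F}$ already reveals the remarkable factorization
\begin{equation*}
\mathbf{N}(\mbx) = \Bigl( -XG,\ \tfrac{2-p}{p-1}\,YG,\ \nu W G \Bigr),
\end{equation*}
in which $G$ appears as a common factor in all three components.

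For~\eqref{eq.U}, I would simply use~\eqref{eq.Y}: since $Y = (p-q)U$, differentiating and invoking the second equation of~\eqref{syst2} gives $(p-q)\dot{U} = \dot{Y} = \lambda_2 Y + \tfrac{2-p}{p-1}YG$, and dividing by $p-q$ yields~\eqref{eq.U}. No information on $h$ is needed, thanks to the vanishing of the second coordinate of $\mathbf{V}_1$, which makes the $Y$-equation decouple from $H$.

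For~\eqref{eq.V}, the relation~\eqref{eq.S} expresses $V$ as a linear combination $V = c_1 X + c_2 Y + W$, with $c_1 = -\alpha(q-p+1)Z_*/\zeta$ and $c_2 = \alpha(q-p+1)^2 Z_*/((p-q)\zeta)$. Differentiating and using the system, the linear and nonlinear contributions split as
\begin{equation*}
\dot{V} = \lambda_3 V + G\left[ -c_1 X + c_2\,\tfrac{2-p}{p-1}\,Y + \nu W \right],
\end{equation*}
where the identification of the linear part with $\lambda_3 V$ is automatic from the diagonalization (alternatively, it follows from the identity $(p-1)(\lambda_1 - \lambda_3) = \zeta$, which is a direct consequence of $\zeta = N(p-1) + qZ_*$ and $\lambda_1 + \nu Z_* = N + Z_*(1+\nu) = N + qZ_*/(p-1)$). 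It then remains to substitute $X = (q-p+1)U + \zeta H$, $Y = (p-q)U$ and $W = V + \alpha(q-p+1)Z_* H$ from~\eqref{stable.man} inside the bracket and to match the result with $\tfrac{\alpha\nu(q-p+1)Z_*}{\zeta}\,U + \nu V + \alpha\nu q Z_*\,H$.

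This final matching is the main (though not deep) obstacle. I expect it to rely on two clean simplifications already implicit in the paper. First, the coefficient of $U$ in the bracket equals $\alpha(q-p+1)^2 Z_*/\zeta \cdot \bigl[1 + (2-p)/(p-1)\bigr] = \alpha(q-p+1)^2 Z_*/((p-1)\zeta) = \alpha\nu(q-p+1)Z_*/\zeta$, using $1 + (2-p)/(p-1) = 1/(p-1)$. Second, the coefficient of $H$ equals $\alpha(q-p+1)Z_*(1+\nu) = \alpha\nu q Z_*$, using $1 + \nu = q/(p-1)$, which has already been noted in Section~\ref{subsec.formal}. The coefficient of $V$ being the transparent $\nu$, these identifications complete the derivation of~\eqref{eq.V}.
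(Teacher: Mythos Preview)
Your proof is correct. For~\eqref{eq.U} you proceed exactly as the paper does, dividing the $Y$-equation by $p-q$. For~\eqref{eq.V} you take a genuinely different and somewhat more direct route: you differentiate~\eqref{eq.S} and invoke the diagonalization to identify the linear part as $\lambda_3 V$ at once, then project only the quadratic remainder $(-XG,\tfrac{2-p}{p-1}YG,\nu WG)$ and substitute~\eqref{stable.man}. The paper instead first extracts from the $X$-equation and~\eqref{eq.X} an intermediate identity for $\dot H$ (equation~\eqref{eq.H}), and only then combines it with the $W$-equation to isolate $\dot V$. Your path is cleaner for the purpose of this lemma; the paper's detour, however, has the advantage of producing~\eqref{eq.H} explicitly, which is reused verbatim in the proof of Lemma~\ref{lem.Taylor} (see~\eqref{interm29}). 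If you adopt your argument, you would need to derive~\eqref{eq.H} separately when you reach that point.
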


\begin{proof}
Using the invariance of the stable manifold, we deduce from the second equation in~\eqref{syst2} and~\eqref{eq.Y} that, on the one hand,
$$
(p-q)\dot{U}=\lambda_2(p-q)U+\frac{(2-p)(p-q)}{p-1}UG,
$$
which readily gives~\eqref{eq.U}. On the other hand, the first equation in~\eqref{syst2} and~\eqref{eq.X} give
\begin{align*}
(q-p+1)\dot{U} + \zeta\dot{H} & = (N+Z_*) \big[(q-p+1)U + \zeta H\big] \\
& \qquad -(p-q)U- \big[(q-p+1)U+\zeta H\big]G.
\end{align*}
Taking into account that
\begin{equation*}
	(N+Z_*)(q-p+1)-(p-q)=-(p-2q),
\end{equation*}
we further obtain
\begin{equation*}
	\dot{U}+\frac{\zeta}{q-p+1}\dot{H}=\lambda_2U+\frac{(N+Z_*)\zeta}{q-p+1}H-\left[U+\frac{\zeta}{q-p+1}H\right]G.
\end{equation*}
We next replace $\dot{U}$ in the previous equation with its expression in the already established~\eqref{eq.U} to get
\begin{equation}\label{eq.H}
\dot{H}=-\frac{\nu}{\zeta}UG+(N+Z_*)H-GH.
\end{equation}
Finally, we employ the third equation in~\eqref{syst2} and~\eqref{eq.W} in order to obtain the equation for $V$. More precisely, we have
\begin{equation*}
\begin{split}
\dot{V}+\alpha(q-p+1)Z_*\dot{H}&=\nu Z_*G+\nu[V+\alpha(q-p+1)Z_*H]G\\
&=\nu Z_*\left[-V+\frac{\alpha q}{\nu}H\right] +\nu [V+\alpha(q-p+1)Z_*H]G\\
&=\lambda_3V+\alpha qZ_*H+\nu [V+\alpha(q-p+1)Z_*H]G.
\end{split}
\end{equation*}
We replace now $\dot{H}$ by the identity~\eqref{eq.H} to obtain
\begin{equation*}
\begin{split}
\dot{V}&=\lambda_3V-\alpha(q-p+1)Z_*\left[-\frac{\nu}{\zeta}UG+(N+Z_*)H-GH\right]\\
&\quad +\alpha qZ_*H+\nu [V+\alpha(q-p+1)Z_*H] G,
\end{split}
\end{equation*}
from which~\eqref{eq.V} follows after straightforward manipulations and taking into account the identity
$\alpha qZ_*=\alpha(q-p+1)(N+Z_*)Z_*$.
\end{proof}

One further difficulty in the forthcoming analysis is the fact that the eigenvalues $\lambda_2$ and $\lambda_3$ are not always ordered in the same way. Indeed, a direct calculation leads to
$$
\lambda_2-\lambda_3=-\frac{P(q-p+1)}{(p-1)(q-p+1)}, \qquad P(\lambda):=(N-1)\lambda^2-3(p-1)\lambda+(2-p)(p-1),
$$
and we may observe that
$$
P(0)=(2-p)(p-1)>0, \qquad P\left(\frac{2-p}{2}\right)=\frac{(2-p)(N+1)}{4}(p_c-p)<0,
$$
and $P'<0$ on $[0,(2-p)/2]$. Hence there is a unique $\lambda_*\in(0,(2-p)/2)$ satisfying $P(\lambda_*)=0$. We thus deduce that there exists a unique $q_*:=\lambda_*+p-1\in(p-1,p/2)$ such that $\lambda_2=\lambda_3$ for $q=q_*$. This interchange of order between the two negative eigenvalues of $D\mathbf{F}(\mathbf{0})$ generates some additional technical difficulties in the (very careful) analysis we perform below. We continue our analysis of the trajectories on the stable manifold by the following boundedness result.

\begin{lemma}\label{lem.bound}
Let $\mbx_0\in\mathcal{W}_s^{\mathcal{V}}(\mathbf{0})$. There exists a constant $C_0>0$ depending on $N$, $p$, $q$, and $\mbx_0$ such that the functions $(U,V)$ defined in~\eqref{decomp} satisfy
\begin{equation*}
	|U(\eta)|+|V(\eta)|\leq C_0 e^{\Lambda\eta}, \qquad \Lambda:=\max\{\lambda_2,\lambda_3\}<0,
\end{equation*}
for any $\eta\in[0,\infty)$.
\end{lemma}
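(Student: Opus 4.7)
The plan is a bootstrap argument: first obtain a crude exponential decay rate $\Lambda+\varepsilon$ (any $\varepsilon>0$) via an energy--Gr\"onwall estimate, and then upgrade to the sharp rate $\Lambda$ by exploiting the resulting $L^1$-integrability of the nonlinear remainder terms.

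To start, since $\mbx_0\in\mathcal{W}_s^{\mathcal{V}}(\mathbf{0})$, the stable-manifold theorem guarantees $\mathbf{\Phi}(\eta;\mbx_0)\to\mathbf{0}$ as $\eta\to\infty$; projecting the decomposition~\eqref{decomp} onto the $(\mathbf{V}_2,\mathbf{V}_3)$-plane gives $(U(\eta),V(\eta))\to(0,0)$. Since $h\in C^\infty(\mathcal{U},\real)$ vanishes together with its differential at $(0,0)$, this yields $|H(\eta)|\le C(U(\eta)^2+V(\eta)^2)$ for $\eta$ large, and hence $G(\eta)\to 0$ by~\eqref{p08}.

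Introducing the energy $\rho:=U^2+V^2$ and using~\eqref{eq.U}--\eqref{eq.V}, a direct computation gives
\begin{equation*}
\dot\rho \;=\; 2\lambda_2 U^2 + 2\lambda_3 V^2 + \mathcal{R}(\eta),
\end{equation*}
where each remainder term either is quadratic in $(U,V)$ times a factor $G$, or is of the form $VGH$; bounding $|H|\le C\rho$ one gets $|\mathcal{R}(\eta)|\le C|G(\eta)|\rho(\eta)$ for $\eta$ large. Since $\lambda_j\le \Lambda<0$ and $G\to 0$, a Gr\"onwall argument yields, for every $\varepsilon>0$, a constant $C_\varepsilon$ with $|U(\eta)|+|V(\eta)|\le C_\varepsilon e^{(\Lambda+\varepsilon)\eta}$.

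Fixing $\varepsilon\in(0,|\Lambda|)$ sufficiently small, this crude rate implies $|H|\le Ce^{2(\Lambda+\varepsilon)\eta}$ and $|G|\le Ce^{(\Lambda+\varepsilon)\eta}$, so both $G$ and $H$ are integrable on $[0,\infty)$. The scalar linear equation~\eqref{eq.U} integrates explicitly as
\begin{equation*}
U(\eta) \;=\; U(0)\exp\!\left(\lambda_2\eta + \tfrac{2-p}{p-1}\int_0^\eta G(s)\,ds\right),
\end{equation*}
whence $|U(\eta)|\le Ce^{\lambda_2\eta}\le Ce^{\Lambda\eta}$. Rewriting~\eqref{eq.V} as $\dot V=(\lambda_3+\nu G)V+S(\eta)$ with $S:=\frac{\alpha\nu(q-p+1)Z_*}{\zeta}UG+\alpha\nu qZ_*GH$, variation of constants gives
\begin{equation*}
V(\eta) \;=\; V(0)e^{\lambda_3\eta+\nu\int_0^\eta G} + \int_0^\eta e^{\lambda_3(\eta-s)+\nu\int_s^\eta G}\,S(s)\,ds.
\end{equation*}
Using the $U$-bound just obtained, $|S(s)|\le C\bigl(e^{(\lambda_2+\Lambda+\varepsilon)s}+e^{3(\Lambda+\varepsilon)s}\bigr)$. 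For $\varepsilon$ small enough both exponents are strictly less than $\Lambda$, so a direct convolution estimate shows the Duhamel integral is $O(e^{\Lambda\eta})$, while the homogeneous term is $O(e^{\lambda_3\eta})\le O(e^{\Lambda\eta})$; summing gives $|V(\eta)|\le Ce^{\Lambda\eta}$.

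The main obstacle is the potentially resonant case $\lambda_2=\lambda_3$, which occurs exactly at $q=q_*$, identified just before the lemma. A naive convolution of $e^{\lambda_3(\eta-s)}$ against a source decaying like $e^{\lambda_3 s}$ would a priori produce a spurious $\eta e^{\Lambda\eta}$ term, destroying the clean bound. Here resonance is automatically avoided because every summand of $S$ carries an extra factor $G$ or $H$ with its own exponential decay, pushing the source rate strictly below $\Lambda$ and precluding logarithmic corrections; this is precisely why the sharp rate $e^{\Lambda\eta}$ (rather than $e^{(\Lambda+\varepsilon)\eta}$ only) can be recovered uniformly in the two cases $\lambda_2\lessgtr\lambda_3$.
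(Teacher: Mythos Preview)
Your proof is correct but follows a genuinely different route from the paper's. The paper obtains the sharp rate in a \emph{single step}: it derives the differential inequality
\[
\frac{d}{d\eta}\big(|U|+|V|\big) \;\le\; -|\Lambda|\big(|U|+|V|\big) + C_3\big(|U|+|V|\big)^2,
\]
by bounding both $|G|$ and $|H|$ linearly in $|U|+|V|$ (using only $h(0,0)=0$, not $Dh(0,0)=0$), and then integrates this Bernoulli/Riccati-type inequality directly once $|U|+|V|$ has fallen below $|\Lambda|/(2C_3)$. No intermediate crude rate and no variation-of-constants are needed.

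Your bootstrap---crude Gr\"onwall rate $e^{(\Lambda+\varepsilon)\eta}$, then explicit integration of~\eqref{eq.U} and Duhamel for~\eqref{eq.V}---is longer but has the advantage of anticipating the arguments of Lemmas~\ref{lem.U} and~\ref{lem.V}, which rely on exactly the same explicit formulas and on the $L^1$-integrability of $G$ that you establish here. Your discussion of the resonant case $\lambda_2=\lambda_3$ is also on point: the extra factor $G$ or $H$ in every source term pushes the forcing strictly below $e^{\Lambda\eta}$, so no $\eta e^{\Lambda\eta}$ correction arises; the paper's Bernoulli approach sidesteps this issue entirely by never splitting the $(U,V)$-system. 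In short: the paper's argument is more economical, yours is more constructive and dovetails with the subsequent convergence lemmas.
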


\begin{proof}
We multiply~\eqref{eq.U} by $\mathrm{sign}(U)$ and~\eqref{eq.V} by $\mathrm{sign}(V)$ and add the resulting identities to obtain
\begin{align*}
	\frac{d}{d\eta}(|U|+|V|)&= \lambda_2 |U| + \frac{2-p}{p-1} |U| G +\lambda_3 |V| + \frac{\alpha\nu(q-p+1)Z_*}{\zeta}UG\, \mathrm{sign}(V) \\
	& \quad +\nu |V|G + \alpha\nu qZ_* GH\, \mathrm{sign}(V)\\
	&\leq \Lambda(|U|+|V|)+\frac{2-p}{p-1}|U||G|+\frac{\alpha\nu(q-p+1)Z_*}{\zeta}|U||G|\\
	&\quad +\nu|V||G|+\alpha\nu qZ_*|G||H|.
\end{align*}
At this point, since $h(0,0)=0$ and $(U,V)\in L^\infty((0,\infty);\real^2)$, we note that there is $C_1>0$ depending on $N$, $p$, $q$, and $\mbx_0$ such that
\begin{equation}\label{bound.H}
|H|=|h(U,V)|\leq C_1(|U|+|V|),
\end{equation}
while
\begin{equation}\label{bound.G}
	|G|\leq |V| + \frac{\alpha q}{\nu}|H|\leq C_2(|U|+|V|), \qquad C_2 := 1 + \frac{\alpha q}{\nu} C_1.
\end{equation}
Consequently, there is $C_3>0$ depending on $N$, $p$, $q$, and $\mbx_0$ such that
\begin{equation}\label{interm27}
\frac{d}{d\eta}(|U|+|V|)+|\Lambda|(|U|+|V|)\leq C_3(|U|+|V|)^2, \qquad \eta\geq0.
\end{equation}
Setting $\Sigma:=|U|+|V|$, we infer from $\mbx_0 \in \mathcal{W}_s^{\mathcal{V}}(\mathbf{0})$ and~\eqref{stable.man} that $\Sigma(\eta)\to 0$ as $\eta\to\infty$, so that there is $\eta_0\ge 0$ such that $\Sigma(\eta)\leq|\Lambda|/(2C_3)$ for $\eta\geq\eta_0$. It follows from~\eqref{interm27} that
$$
\dot{\Sigma}(\eta)\leq-\Sigma(\eta)(|\Lambda|-C_3\Sigma(\eta)), \qquad \eta\geq\eta_0,
$$
and we obtain by integration, taking into account the non-positivity of the right-hand side, that
$$
\frac{\Sigma(\eta)}{|\Lambda|}\le \frac{\Sigma(\eta)}{|\Lambda|-C_3\Sigma(\eta)}\leq C_4e^{-|\Lambda|\eta}, \qquad C_4=\frac{\Sigma(\eta_0)}{|\Lambda|-C_3\Sigma(\eta_0)}e^{|\Lambda|\eta_0},
$$
for $\eta\geq\eta_0$. The conclusion follows from the previous estimate and the boundedness of $\Sigma$ on $[0,\eta_0]$.
\end{proof}

The convergence for $U$ follows now rather easily from~\eqref{eq.U}.

\begin{lemma}\label{lem.U}
Let $\mbx_0\in\mathcal{W}_s^{\mathcal{V}}(\mathbf{0})$. There exists $U_{\infty}(\mbx_0)\in\real$ such that  the function $U$ defined in~\eqref{decomp} satisfies
$$
\lim\limits_{\eta\to\infty}e^{-\lambda_2\eta}U(\eta)=U_{\infty}(\mbx_0).
$$
\end{lemma}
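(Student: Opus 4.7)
The plan is to reduce \eqref{eq.U} to a separable scalar ODE by removing the linear part. Setting $\tilde{U}(\eta) := e^{-\lambda_2 \eta} U(\eta)$, a direct substitution into \eqref{eq.U} yields
$$\dot{\tilde{U}}(\eta) = \frac{2-p}{p-1}\, G(\eta)\, \tilde{U}(\eta), \qquad \eta \ge 0,$$
which can be integrated explicitly to give
$$\tilde{U}(\eta) = U(0) \exp\left( \frac{2-p}{p-1} \int_0^\eta G(s) \, ds \right), \qquad \eta \ge 0.$$
The convergence statement in the lemma thus reduces to checking the integrability of $G$ on $[0,\infty)$.

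Combining the pointwise bound $|G| \le C_2(|U|+|V|)$ from~\eqref{bound.G} with the exponential decay established in Lemma~\ref{lem.bound} yields $|G(\eta)| \le C_0 C_2\, e^{\Lambda \eta}$ with $\Lambda<0$, so that $G \in L^1((0,\infty))$ and $\int_0^\eta G(s)\, ds$ converges in $\real$ as $\eta\to\infty$. Passing to the limit in the integrated formula above then gives
$$\lim_{\eta \to \infty} e^{-\lambda_2 \eta} U(\eta) = U_\infty(\mbx_0) := U(0) \exp\left( \frac{2-p}{p-1} \int_0^\infty G(s) \, ds \right) \in \real,$$
which is exactly the claim. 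Since Lemma~\ref{lem.bound} has already done the substantial work of providing a uniform exponential decay for $(U,V)$ (and therefore for $G$), no genuine obstacle is expected in this step; the argument is a short application of the variation of constants formula combined with the $L^1$-integrability of the perturbation.
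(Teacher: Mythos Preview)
Your proof is correct and follows essentially the same approach as the paper: both integrate~\eqref{eq.U} via an integrating factor to obtain $e^{-\lambda_2\eta}U(\eta)=U(0)\exp\big(\tfrac{2-p}{p-1}\int_0^\eta G(s)\,ds\big)$, and then invoke Lemma~\ref{lem.bound} together with~\eqref{bound.G} to get $G\in L^1(0,\infty)$ and pass to the limit. The only cosmetic difference is that the paper writes the integrating factor step as $\tfrac{d}{d\eta}[U\exp(-\lambda_2\eta-\tfrac{2-p}{p-1}\int_0^\eta G)]=0$ rather than introducing $\tilde U$ explicitly.
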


\begin{proof}
We readily infer from~\eqref{eq.U} that
$$
\frac{d}{d\eta}\left[U(\eta)\exp\left(-\lambda_2\eta-\frac{2-p}{p-1}\int_{0}^{\eta}G(s)\,ds\right)\right]=0,
$$
whence
\begin{equation*}
e^{-\lambda_2\eta}U(\eta)= U(0)  \exp\left(\frac{2-p}{p-1}\int_{0}^{\eta}G(s)\,ds\right), \qquad \eta\geq 0.
\end{equation*}
We then infer from Lemma~\ref{lem.bound} and the estimate~\eqref{bound.G} that $G\in L^1(0,\infty)$, so that
$$
\lim\limits_{\eta\to\infty}e^{-\lambda_2\eta}U(\eta)= U(0) \exp\left(\frac{2-p}{p-1}\int_{0}^{\infty}G(s)\,ds\right)=:U_{\infty}(\mbx_0),
$$
and the proof is complete.
\end{proof}

Obtaining a similar result for the component $V$ of the orbit, with respect to the eigenvalue $\lambda_3$, is much more involved, as~\eqref{eq.V} involves more terms. Thus, we need one more technical, preparatory lemma about the Taylor expansion of the function $h$ at $(0,0)$.

\begin{lemma}\label{lem.Taylor}
For any integer $n\geq1$, we have
$$
\partial_u^n h(0,0)=\partial_v^n h(0,0)=0.
$$
\end{lemma}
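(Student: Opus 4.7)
The plan is to establish the two sets of vanishing identities by complementary arguments: pure $v$-derivatives will follow directly from the invariance of the $W$-axis under the flow~\eqref{syst2}, whereas pure $u$-derivatives require an inductive Taylor matching inside the invariance PDE satisfied by $h$.

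For the pure $v$-derivatives, I would first observe that the $W$-axis $\{(0,0,w) : w\in\real\}$ is invariant under~\eqref{syst2}: a direct evaluation of $\mathbf{F}$ at $(0,0,w)$ yields $F_1(0,0,w)=F_2(0,0,w)=0$ and $F_3(0,0,w)=-\nu w(Z_*+w)$. For $|w_0|$ small, the orbit issued from $(0,0,w_0)$ remains on the $W$-axis and decays exponentially to $\mathbf{0}$ as $\eta\to\infty$, so that a neighborhood of $\mathbf{0}$ on the $W$-axis is contained in $\mathcal{W}_s^{\mathcal{V}}(\mathbf{0})$. For such a point $(0,0,v)$, reading off~\eqref{decomp} in the basis $(\mathbf{V}_1,\mathbf{V}_2,\mathbf{V}_3)$ forces $u=0$ (from $Y=(p-q)u=0$) and then $\zeta h(0,v)=X=0$, yielding $h(0,v)=0$. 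Consequently $h(0,v)\equiv 0$ in a neighborhood of $v=0$, which gives $\partial_v^n h(0,0)=0$ for every $n\geq 1$.

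For the pure $u$-derivatives, I would derive the invariance PDE satisfied by $h$: differentiating~\eqref{decomp} along the flow, the identity $\dot H=\partial_u h\,\dot U+\partial_v h\,\dot V$ combined with~\eqref{eq.H} and~\eqref{eq.U}-\eqref{eq.V} yields
$$\partial_u h(u,v)\,\mathcal{U}(u,v)+\partial_v h(u,v)\,\mathcal{V}(u,v)=-\frac{\nu}{\zeta}u\,\mathcal{G}(u,v)+\lambda_1 h(u,v)-\mathcal{G}(u,v)h(u,v),$$
with $\mathcal{G}=-v+\frac{\alpha q}{\nu}h$ and $\mathcal{U},\mathcal{V}$ obtained from the right-hand sides of~\eqref{eq.U}-\eqref{eq.V} by the substitution $(U,V,H,G)\mapsto(u,v,h,\mathcal{G})$. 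Restricting this identity to $v=0$ and arguing by induction on $n\geq 2$, assume that $\partial_u^k h(0,0)=0$ for $1\leq k\leq n-1$; then $h(u,0)=cu^n+O(u^{n+1})$ with $c=\partial_u^n h(0,0)/n!$. Collecting the $u^n$-coefficient, the only contribution on the left stems from $\partial_u h(u,0)\cdot\lambda_2 u$ and equals $n\lambda_2 c$, while on the right the only contribution comes from $\lambda_1 h(u,0)$ and equals $\lambda_1 c$. Every remaining term carries an additional factor from $\{u h(u,0),\,h(u,0)^2,\,u\partial_v h(u,0)\}$; each such factor is $O(u^{n+1})$ thanks to the induction hypothesis combined with $\partial_v h(0,0)=0$. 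The resulting equation $(n\lambda_2-\lambda_1)c=0$ together with the non-resonance $\lambda_1>0>n\lambda_2$ forces $c=0$, closing the induction.

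The main delicate point is the careful bookkeeping needed to verify, at each induction step, that every term in the restricted PDE beyond the two isolated above is genuinely of order $u^{n+1}$ or higher; this uses both $h(u,0)=O(u^n)$ and $\partial_v h(0,0)=0$. Once this is in place, the argument depends only on the strict separation of signs $\lambda_1>0>\lambda_2$, which holds throughout the admissible range~\eqref{exp.eq1}, and an entirely analogous Taylor-matching argument at $u=0$ using $\lambda_1>0>n\lambda_3$ would give an alternative, unified proof of the whole lemma without appealing to the invariance of the $W$-axis.
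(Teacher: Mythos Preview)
Your proof is correct, and for the pure $u$-derivatives it is essentially the paper's argument: both derive the invariance PDE from $\dot H=\partial_u h\,\dot U+\partial_v h\,\dot V$ together with~\eqref{eq.H}, restrict to $v=0$, expand in Taylor series, and conclude inductively from $(n\lambda_2-\lambda_1)c=0$ (the paper writes the coefficient as $\lambda_2/(n-1)!-q/[(q-p+1)n!]$, which is the same thing after multiplying by $n!$ and using $\lambda_1=N+Z_*=q/(q-p+1)$).

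Where you genuinely differ is in the treatment of the pure $v$-derivatives. The paper handles them symmetrically, by restricting the invariance PDE to $u=0$ and running the same Taylor-matching induction with $\lambda_3$ in place of $\lambda_2$. Your observation that the $W$-axis is invariant under~\eqref{syst2} and lies in the local stable manifold, forcing $h(0,v)\equiv 0$ near $v=0$, is a more geometric shortcut that bypasses the induction entirely for this half of the lemma. It is cleaner and gives the result at once, at the price of being specific to the $v$-direction (there is no analogous invariant line available for the $u$-direction, since the $Y$-axis is not invariant due to the $-Y$ term in $F_1$). The paper's approach, by contrast, is fully symmetric in $u$ and $v$ and, as you note yourself at the end, would also have given you the $v$-part without the extra geometric input.
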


\begin{proof}
The proof is based on a rather technical computation of the coefficients of the Taylor expansion of the function $h$. To this end, we pick an arbitrary pair $(u,v)\in \mathcal{U}_0$ and recall that~\eqref{p07c} ensures that
\begin{equation*}
	\mbx_0=u\mathbf{V}_2 +  v\mathbf{V}_3 + h(u,v)\mathbf{V}_1 \in \mathcal{W}_s^{\mathcal{V}}(\mathbf{0}).
\end{equation*}
Keeping the notation $(X,Y,W) = \mathbf{\Phi}(\cdot;\mbx_0)$ and the corresponding $(U,V,H,G)$ introduced in~\eqref{decomp} and~\eqref{p08}, we deduce from~\eqref{p08} and~\eqref{eq.H} that
\begin{align*}
\partial_u h(U,V) \dot{U} + \partial_v h(U,V) \dot{V} & = \frac{\nu}{\zeta} \left[UV-\frac{\alpha(p-1)q}{q-p+1}UH\right]\\
& \qquad +(N+Z_*)H+\left[V-\frac{\alpha(p-1)q}{q-p+1}H\right]H.
\end{align*}
We next replace $\dot{U}$ and $\dot{V}$ from~\eqref{eq.U} and~\eqref{eq.V} into the previous identity. Taking also into account~\eqref{p08}, the expression of $Z_*$ in \eqref{zstar}, and letting $\eta\to0$ we are thus left, after some easy manipulations, with the following (lengthy) equality which holds true for any $(u,v)\in\mathcal{U}_0$:
\begin{equation}\label{interm29}
\begin{split}
&\partial_uh(u,v)\left[\lambda_2u-\frac{2-p}{p-1}uv+\frac{\alpha q(2-p)}{q-p+1}uh(u,v)\right]\\
&+\partial_vh(u,v)\left[\lambda_3v-\frac{\alpha\nu(q-p+1)Z_*}{\zeta}uv-\nu v^2\right.\\
&\hspace{1cm} \left. + \frac{\alpha^2q(q-p+1)}{\zeta} uh(u,v)+\alpha\nu(N-1)qvh(u,v)+\alpha^2q^2Z_*h^2(u,v)\right]\\
&=\frac{\nu}{\zeta}uv+\frac{q}{q-p+1}h(u,v)-\frac{\alpha q}{\zeta}uh(u,v)+vh(u,v)-\frac{\alpha q(p-1)}{q-p+1}h^2(u,v).
\end{split}
\end{equation}
We let first $v=0$ in~\eqref{interm29} and, taking into account that $h(0,0)=Dh(0,0)=0$ and thus that $h(u,v)$ is at least a quadratic expression in $(u,v)$, we obtain by identifying only the quadratic terms:
\begin{equation}\label{interm30}
\lambda_2u\partial_uh(u,0)=\frac{q}{q-p+1}h(u,0)+o(h(u,0)).
\end{equation}
Since, for any $n\geq1$,
\begin{equation*}
\begin{split}
&h(u,0)=\sum\limits_{k=0}^n\frac{\partial^k_uh(0,0)}{k!}u^k+o(u^n),\\
&\partial_uh(u,0)=\sum\limits_{k=0}^{n-1}\frac{\partial^{k+1}_uh(0,0)}{k!}u^k+o(u^{n-1}),
\end{split}
\end{equation*}
we infer from~\eqref{interm30} that
$$
\lambda_2\sum\limits_{k=1}^n\frac{\partial^k_uh(0,0)}{(k-1)!}u^k-\frac{q}{q-p+1}\sum\limits_{k=0}^n\frac{\partial^k_uh(0,0)}{k!}u^k=o(h(u,0))+o(u^n),
$$
or equivalently,
\begin{equation}\label{interm31}
\sum\limits_{k=2}^n\left[\frac{\lambda_2}{(k-1)!}-\frac{q}{(q-p+1)k!}\right]\partial^k_uh(0,0)u^k=o(u^n)+o(h(u,0)).
\end{equation}
We proceed by induction. For $n=2$, we obtain from~\eqref{interm31} that
$$
\left[\lambda_2-\frac{q}{2(q-p+1)}\right]\partial^2_uh(0,0)u^2=o(u^2)+o(h(u,0))=o(u^2),
$$
and we infer from the negativity of $\lambda_2$ that $\partial^2_uh(0,0)=0$. Assume next that $\partial^k_uh(0,0)=0$ for $0\leq k\leq n-1$, $n\geq3$. Then $h(u,0)=O(u^n)$, so that $o(h(u,0))=o(u^n)$ and it follows from~\eqref{interm31} that
$$
\left[\frac{\lambda_2}{(n-1)!}-\frac{q}{(q-p+1)n!}\right]\partial^n_uh(0,0)u^n=o(u^n).
$$
Consequently, using again the negativity of $\lambda_2$, we get that $\partial^n_uh(0,0)=0$ and the proof by induction is completed. We proceed in a similar way for the derivatives with respect to the $v$ variable, by letting $u=0$ in~\eqref{interm29} and identifying only the quadratic terms to find
\begin{equation}\label{interm32}
\lambda_3 v\partial_vh(0,v)=\frac{q}{q-p+1}h(0,v)+o(h(0,v)).
\end{equation}
We plug again the Taylor expansion of $h(0,v)$ into~\eqref{interm32}, as we did before with $h(u,0)$. Indeed, since
\begin{equation*}
\begin{split}
&h(0,v)=\sum\limits_{k=0}^n\frac{\partial^k_vh(0,0)}{k!}v^k+o(v^n),\\
&\partial_vh(0,v)=\sum\limits_{k=0}^{n-1}\frac{\partial^{k+1}_vh(0,0)}{k!}v^k+o(v^{n-1}),
\end{split}
\end{equation*}
we derive from~\eqref{interm32} that
\begin{equation}\label{interm33}
\sum\limits_{k=2}^n\left[\frac{\lambda_3}{(k-1)!} - \frac{q}{(q-p+1)k!} \right] \partial^k_vh(0,0) = o(v^n)+o(h(0,v)).
\end{equation}
We proceed again by induction. For $n=2$, \eqref{interm33} gives
\begin{equation*}
	\lambda_3\partial^2_vh(0,0)v^2=\frac{q}{2(q-p+1)}\partial^2_vh(0,0)v^2+o(v^2)
\end{equation*}
and the negativity of $\lambda_3$ implies that $\partial^2_vh(0,0)=0$. Assuming as before that $\partial^k_vh(0,0)=0$ for $0\leq k\leq n-1$ and $n\geq 3$ and replacing this induction assumption into~\eqref{interm33}, we are left with
\begin{equation*}
\left[ \frac{\lambda_3}{(n-1)!} -\frac{q}{(q-p+1)n!} \right] \partial^n_vh(0,0)v^n=o(v^n),
\end{equation*}
and once more the negativity of $\lambda_3$ implies that $\partial^n_vh(0,0)=0$, completing the induction step and the proof.
\end{proof}

With the previous preparation, we can now state and prove a convergence lemma for $V$.

\begin{lemma}\label{lem.V}
Let $\mbx_0\in\mathcal{W}_s^{\mathcal{V}}(\mathbf{0})$. There exists $V_{\infty}(\mbx_0)\in\real$ such that the function $V$ defined in~\eqref{decomp} satisfies
$$
\lim\limits_{\eta\to\infty}e^{-\lambda_3\eta}V(\eta)=V_{\infty}(\mbx_0).
$$
\end{lemma}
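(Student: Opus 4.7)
The plan is to rewrite equation~\eqref{eq.V} as a scalar linear ODE of the form $\dot V = V\bigl(\lambda_3 + \tilde R(\eta)\bigr)$ with an integrable perturbation $\tilde R$, from which the convergence $e^{-\lambda_3 \eta} V(\eta) \to V_\infty(\mbx_0)$ follows by elementary separation of variables.

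The crucial step is a structural factorization of the nonlinear remainder
$$R := \frac{\alpha \nu (q-p+1) Z_*}{\zeta} U G + \nu V G + \alpha \nu q Z_* G H.$$
By Lemma~\ref{lem.Taylor}, all pure derivatives of $h$ at the origin vanish, so the Taylor series of $h$ at $(0,0)$ involves only mixed monomials $u^i v^j$ with $i, j \ge 1$. Since $\mathbf{F}$ is a quadratic polynomial vector field, the classical analytic stable manifold theorem ensures that $h$ is analytic in a neighborhood of the origin, and we may factor $h(u, v) = u v\, \tilde h(u, v)$ with $\tilde h$ analytic near $(0,0)$. Substituting $H = U V\, \tilde h(U, V)$ into $G = -V + (\alpha q/\nu) H$ yields
$$G = V\, G_1(U, V), \qquad G_1(U, V) := -1 + \frac{\alpha q}{\nu} U\, \tilde h(U, V),$$
with $G_1$ bounded near the origin. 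A direct check then shows that each of the three monomials $U G$, $V G$, $G H$ contains an explicit factor of $V$, namely $U G = V U G_1$, $V G = V^2 G_1$, and $G H = V^2 U\, \tilde h\, G_1$. Hence $R = V\, \tilde R(U, V)$ with $|\tilde R(U, V)| \le C\, (|U| + |V|)$ near the origin.

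Equation~\eqref{eq.V} thus reduces to $\dot V = V \bigl(\lambda_3 + \tilde R(U, V)\bigr)$. If $V(0) = 0$, the invariance of $\{V = 0\}$ under this ODE (manifest from $R = V \tilde R$) together with uniqueness for Cauchy problems forces $V \equiv 0$, so the claim holds trivially with $V_\infty(\mbx_0) := 0$. Otherwise $V$ does not vanish on $[0, \infty)$ and separation of variables yields
$$e^{-\lambda_3 \eta} V(\eta) = V(0) \exp\left( \int_0^\eta \tilde R(U(s), V(s))\, ds \right), \qquad \eta \ge 0.$$
Lemma~\ref{lem.bound} then provides $|\tilde R(U(s), V(s))| \le C(|U(s)| + |V(s)|) \le C\, e^{\Lambda s}$ with $\Lambda < 0$, so $\tilde R(U, V) \in L^1(0, \infty)$, the right-hand side converges as $\eta \to \infty$, and we define
$$V_\infty(\mbx_0) := V(0) \exp\left( \int_0^\infty \tilde R(U(s), V(s))\, ds \right).$$

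The main obstacle is rigorously justifying the factorization $h(u, v) = u v\, \tilde h(u, v)$ from Lemma~\ref{lem.Taylor}, which requires analyticity of the stable manifold — a standard consequence of $\mathbf{F}$ being polynomial, but one worth making explicit. This refinement is essential: without the pointwise factorization, the naive bound $|R| \le C |V|\,(|U| + |V|) \le C\, e^{2 \Lambda \eta}$ combined with Lemma~\ref{lem.bound} yields only $e^{-\lambda_3 \eta} |R| \le C\, e^{(2 \Lambda - \lambda_3)\eta}$, which fails to be integrable as soon as $\Lambda > \lambda_3/2$. The discussion preceding Lemma~\ref{lem.bound} of the two possible orderings of $\lambda_2$ and $\lambda_3$ shows that this borderline configuration $\lambda_3 < \lambda_2 < \lambda_3/2$ genuinely occurs within the range~\eqref{exp.eq1}, so that the structural factorization based on Lemma~\ref{lem.Taylor} cannot be bypassed by soft estimates alone.
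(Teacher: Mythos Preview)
Your argument is correct, and it follows a genuinely different route from the paper's. Both proofs hinge on Lemma~\ref{lem.Taylor}, but they exploit it differently. The paper does \emph{not} factor $R=V\tilde R$; instead it splits $G=-V+(\alpha q/\nu)H$ and writes $\dot V=(\lambda_3+Q_1)V+Q_2$, where $Q_1$ collects the $V$-linear part and $Q_2$ gathers all terms containing a factor~$H$. The inhomogeneous term $Q_2$ is then controlled by invoking Lemma~\ref{lem.Taylor} only in the weak form $|H|\le C_5(n)(|U|+|V|)^n$ for \emph{every} $n\ge 2$: choosing $n$ large enough that $n\Lambda<\lambda_3$ forces $e^{-\lambda_3\eta}|Q_2|\le C e^{[(n+1)\Lambda-\lambda_3]\eta}\in L^1$. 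Your approach instead upgrades Lemma~\ref{lem.Taylor} to the pointwise factorization $h(u,v)=uv\,\tilde h(u,v)$, which is legitimate because $\mathbf{F}$ is polynomial and the analytic stable manifold theorem applies; this yields $G=VG_1$ and hence $R=V\tilde R$, reducing~\eqref{eq.V} to a purely separable equation. The trade-off: the paper's argument is slightly more robust in that it only uses the $C^\infty$ regularity of $h$ already asserted in~\eqref{p07}, whereas your proof requires analyticity of the local stable manifold --- a standard fact for analytic vector fields, but one the paper does not state. In return, your route is structurally cleaner, makes the invariance of $\{V=0\}$ transparent, and avoids the variation-of-constants formula~\eqref{p09} with its inhomogeneous term.
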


\begin{proof}
Let $n\ge 2$ be such that $n\Lambda < \lambda_3$. Owing to Lemma~\ref{lem.Taylor} and Taylor's theorem, there exists $C_5(n)>0$ depending only on $N$, $p$, $q$, $\mbx_0$, and $n$ such that
\begin{equation}\label{interm34}
	|H| = |h(U,V)|\leq C_5(n) (|U|+|V|)^n.
\end{equation}
Recalling~\eqref{p08}, we next notice that we can write~\eqref{eq.V} in the form
\begin{equation*}
	\dot{V} = \big( \lambda_3+Q_1 \big) V + Q_2,
\end{equation*}
where
\begin{align*}
	Q_1 & := -\frac{\alpha\nu(q-p+1)Z_*}{\zeta}U - \nu V, \\
	Q_2 & := \frac{\alpha^2(q-p+1)qZ_*}{\zeta} UH + \alpha q\nu (N-1) VH + \alpha^2 q^2 Z_* H^2.
\end{align*}
Consequently, for $\eta\ge 0$,
\begin{align*}
	\frac{d}{d\eta} \left[ V(\eta) \exp{\left( - \lambda_3\eta - \int_0^\eta Q_1(s)\,ds \right)} \right] = Q_2(\eta) \exp{\left( - \lambda_3\eta - \int_0^\eta Q_1(s)\,ds \right)},
\end{align*}
whence, after integration,
\begin{equation}
	\begin{split}
		e^{-\lambda_3\eta} V(\eta) & = V(0) \exp{\left( \int_0^\eta Q_1(s)\,ds \right)} \\
		& \quad + \int_0^\eta e^{-\lambda_3 s} Q_2(s) \exp{\left( \int_s^\eta Q_1(s_*)\,ds_* \right)}\, ds.
	\end{split} \label{p09}
\end{equation}
On the one hand, we deduce from Lemma~\ref{lem.bound} that, for $\eta\ge 0$,
\begin{equation*}
	|Q_1(\eta)| \le \nu \left( 1 + \frac{\alpha (q-p+1)Z_*}{\zeta} \right) C_0 e^{\Lambda\eta},
\end{equation*}
so that $Q_1\in L^1(0,\infty)$. On the other hand, setting
\begin{equation*}
	C_6 := \alpha q \max\left\{ \frac{\alpha (q-p+1) Z_*}{\zeta} , \nu (N-1) , \alpha q Z_*\right\},
\end{equation*}
we infer from~\eqref{bound.H}, \eqref{interm34} and Lemma~\ref{lem.bound} that, for $\eta\ge 0$,
\begin{align*}
	e^{-\lambda_3 \eta} |Q_2(\eta)| & \le C_6 \big( |U(\eta)|+|V(\eta)| +  |H(\eta)| \big) |H(\eta)| e^{-\lambda_3 \eta} \\
	& \le C_6 (1+C_1) C_5(n) \big( |U(\eta)|+|V(\eta)| \big)^{n+1} e^{-\lambda_3 \eta} \\
	& \le C_6 (1+C_1) C_5(n) C_0^{n+1} e^{[(n+1)\Lambda -\lambda_3] \eta} \\
	& \le C_6 (1+C_1) C_5(n) C_0^{n+1} e^{\Lambda \eta},
\end{align*}
and this upper bound guarantees that $\eta\mapsto e^{-\lambda_3 \eta} Q_2(\eta)$ belongs to $L^1(0,\infty)$. Consequently,
\begin{equation*}
	V_\infty(\mbx_0) := V(0) \exp{\left( \int_0^\infty Q_1(s)\,ds \right)} \quad + \int_0^\infty e^{-\lambda_3 s} Q_2(s) \exp{\left( \int_s^\infty Q_1(s_*)\,ds_* \right)}\, ds
\end{equation*}
is finite and we let $\eta\to\infty$ in~\eqref{p09} to complete the proof of Lemma~\ref{lem.V}.
\end{proof}

Putting together the previous analysis, we can now identify the decay as $\eta\to\infty$ on any trajectory contained in the stable manifold $\mathcal{W}_s^{\mathcal{V}}(\mathbf{0})$ of the critical point $P_0$ of the system~\eqref{syst2}.

\begin{proposition}\label{prop.stable}
Let $\mbx_0\in \mathcal{W}_s^{\mathcal{V}}(\mathbf{0})$. Then, as $\eta\to\infty$,
\begin{equation}\label{beh.stable}
\mbx(\eta;\mbx_0) = \left[ U_{\infty}(\mbx_0) e^{\lambda_2\eta} + o\left(e^{\lambda_2\eta}\right) \right] \mathbf{V}_2 + \left[ V_{\infty}(\mbx_0) e^{\lambda_3\eta} + o\left(e^{\lambda_3\eta}\right) \right] \mathbf{V}_3,
\end{equation}
where $U_{\infty}(\mbx_0)$ and $V_{\infty}(\mbx_0)$ are defined in Lemma~\ref{lem.U} and Lemma~\ref{lem.V}, respectively.
\end{proposition}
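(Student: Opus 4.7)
The plan is to simply combine the preceding four lemmas and show that the $\mathbf{V}_1$ component in the decomposition \eqref{decomp} decays strictly faster than both $e^{\lambda_2\eta}$ and $e^{\lambda_3\eta}$, so that it gets absorbed into the error terms of \eqref{beh.stable}.

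First, since $\mbx_0\in\mathcal{W}_s^{\mathcal{V}}(\mathbf{0})$, the decomposition \eqref{decomp} yields
\begin{equation*}
	\mbx(\eta;\mbx_0) = U(\eta)\mathbf{V}_2 + V(\eta)\mathbf{V}_3 + H(\eta)\mathbf{V}_1, \qquad \eta\ge 0,
\end{equation*}
where $H=h(U,V)$. Applying Lemma~\ref{lem.U} rewrites $U(\eta) = U_\infty(\mbx_0) e^{\lambda_2\eta} + o(e^{\lambda_2\eta})$ and Lemma~\ref{lem.V} gives $V(\eta) = V_\infty(\mbx_0) e^{\lambda_3\eta} + o(e^{\lambda_3\eta})$ as $\eta\to\infty$. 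Hence the coefficients in front of $\mathbf{V}_2$ and $\mathbf{V}_3$ are already in the desired form, and it only remains to control the $\mathbf{V}_1$ contribution.

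Next, to bound $H(\eta)$, I would invoke the vanishing of pure partial derivatives of $h$ at the origin established in Lemma~\ref{lem.Taylor}, together with Taylor's theorem, to get, for any integer $n\ge 2$, a constant $C_5(n)>0$ such that
\begin{equation*}
	|H(\eta)| = |h(U(\eta),V(\eta))| \le C_5(n)(|U(\eta)|+|V(\eta)|)^n, \qquad \eta\ge 0.
\end{equation*}
Combining this with the exponential bound of Lemma~\ref{lem.bound}, namely $|U(\eta)|+|V(\eta)| \le C_0 e^{\Lambda\eta}$ with $\Lambda=\max\{\lambda_2,\lambda_3\}<0$, yields $|H(\eta)|\le C_5(n) C_0^n e^{n\Lambda\eta}$. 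Choosing $n$ large enough so that $n\Lambda < \min\{\lambda_2,\lambda_3\}$ ensures $H(\eta) = o(e^{\lambda_2\eta})$ and $H(\eta) = o(e^{\lambda_3\eta})$ simultaneously as $\eta\to\infty$. Since $\mathbf{V}_1$ is a fixed vector, the contribution $H(\eta)\mathbf{V}_1$ is thus of smaller order than either of the leading terms and can be absorbed into the error terms, giving~\eqref{beh.stable}.

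There is no serious obstacle left at this stage: the three nontrivial ingredients (the two limit Lemmas~\ref{lem.U} and~\ref{lem.V}, and the crucial vanishing of the pure derivatives of $h$ from Lemma~\ref{lem.Taylor}) have already been proved. The only subtle point to flag is that without Lemma~\ref{lem.Taylor} one would only obtain $H(\eta)=O((|U|+|V|)^2)=O(e^{2\Lambda\eta})$, which could be larger than $e^{\min\{\lambda_2,\lambda_3\}\eta}$ when $2\Lambda > \min\{\lambda_2,\lambda_3\}$, a situation that does occur in the regime where $\lambda_2$ and $\lambda_3$ have comparable magnitudes (recall the interchange of order between these two eigenvalues discussed after Lemma~\ref{lem.stable}). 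The ability to push the Taylor expansion of $h$ to arbitrarily high order $n$ is therefore what makes the argument robust across the whole range~\eqref{exp.eq1}.
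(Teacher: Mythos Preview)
Your proposal is correct and follows essentially the same approach as the paper: the paper's proof is a terse two-sentence version of exactly what you wrote, combining the bound~\eqref{interm34} (your Taylor estimate from Lemma~\ref{lem.Taylor}) with Lemma~\ref{lem.bound} to get $|H(\eta)|\le C_0^n C_5(n) e^{n\Lambda\eta}$, and then invoking~\eqref{stable.man} together with Lemmas~\ref{lem.U} and~\ref{lem.V}. Your additional commentary on why pushing the Taylor order to arbitrary $n$ is essential (to handle the regime where $2\Lambda>\min\{\lambda_2,\lambda_3\}$) is a helpful clarification that the paper leaves implicit.
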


\begin{proof}
We first observe that~\eqref{interm34}, along with Lemma~\ref{lem.bound}, implies that, for all $n\ge 2$,
\begin{equation}
	|h(U(\eta),V(\eta))| \le C_0^n C_5(n) e^{n\Lambda\eta}, \qquad \eta\ge 0. \label{p10}
\end{equation}
The asymptotic expansion~\eqref{beh.stable} as $\eta\to\infty$ now follows immediately from~\eqref{stable.man} and~\eqref{p10}, together with the convergences in Lemmas~\ref{lem.U} and~\ref{lem.V}.
\end{proof}

The information given in Proposition~\ref{prop.stable} is already sufficient in order to complete the proof of Theorem~\ref{th.dimN}. But we postpone for the moment its proof and first show that the limits in Lemmas~\ref{lem.U} and~\ref{lem.V} determine uniquely the trajectory contained in the stable manifold of $P_0$.

\begin{proposition}\label{prop.uniq}
The map $\mbx_0 \mapsto \big( U_\infty(\mbx_0),V_\infty(\mbx_0) \big)$ is one-to-one on $\mathcal{V}$, where $U_{\infty}$ and $V_{\infty}$ are introduced in Lemmas~\ref{lem.U} and~\ref{lem.V}, respectively. Moreover, if $\varrho\in\real$ is such that $(\varrho\beta,\varrho\alpha,0)\in \mathcal{V}$, then
\begin{equation*}
	U_\infty((\varrho\beta,\varrho\alpha,0)) = \frac{\varrho}{p-2q}, \qquad V_\infty((\varrho\beta,\varrho\alpha,0)) = 0.	
\end{equation*}
\end{proposition}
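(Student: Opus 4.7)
The strategy is to treat the injectivity via an inverse-function-theorem argument after parametrizing the stable manifold, and to handle the explicit values by exhibiting an invariant line for the flow. By~\eqref{p07b}, every point $\mbx_0 \in \mathcal{W}_s^{\mathcal{V}}(\mathbf{0})$ admits a unique representation $\mbx_0 = u\mathbf{V}_2 + v\mathbf{V}_3 + h(u,v)\mathbf{V}_1$ with $(u,v) \in \mathcal{U}$, and evaluating~\eqref{decomp} at $\eta = 0$ identifies $(u,v)$ with $(U(0;\mbx_0), V(0;\mbx_0))$. Hence, possibly shrinking $\mathcal{V}$ (and correspondingly $\mathcal{U}_0$) if needed, proving the stated injectivity reduces to showing that the map $\Psi : (u,v) \mapsto (U_{\infty},V_{\infty})$ is injective on an open neighborhood of the origin in $\real^2$.

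I would then show that $\Psi$ is $C^1$ near $(0,0)$ and compute its derivative there. Smooth dependence of~\eqref{syst2} on initial data makes $(u,v) \mapsto (U(\eta;u,v), V(\eta;u,v))$ a $C^\infty$ map for each fixed $\eta \ge 0$; revisiting the representation formulas used in the proofs of Lemmas~\ref{lem.U} and~\ref{lem.V}, together with the exponential decay in Lemma~\ref{lem.bound} made locally uniform in $(u,v)$, ensures that $\Psi$ itself is of class $C^1$. Since $h(0,0) = Dh(0,0) = 0$, one has $\partial_u \mbx(0;0,0) = \mathbf{V}_2$ and $\partial_v \mbx(0;0,0) = \mathbf{V}_3$, and the variational system $\dot{\mathbf{y}} = D\mathbf{F}(\mathbf{0})\mathbf{y}$ combined with the eigenvector identities yields $\partial_u \mbx(\eta;0,0) = e^{\lambda_2\eta}\mathbf{V}_2$ and $\partial_v \mbx(\eta;0,0) = e^{\lambda_3\eta}\mathbf{V}_3$. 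Reading off the $\mathbf{V}_2$- and $\mathbf{V}_3$-components of the differentiated version of~\eqref{decomp} at $(u,v) = (0,0)$ gives $\partial_u U(\eta;0,0) = e^{\lambda_2\eta}$, $\partial_v V(\eta;0,0) = e^{\lambda_3\eta}$ and the cross derivatives vanish; passing to the limit $\eta\to\infty$ delivers $D\Psi(0,0) = I$. The inverse function theorem then makes $\Psi$ a local diffeomorphism at the origin, and after a last shrinking of $\mathcal{V}$ the stated injectivity follows.

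For the explicit formula, consider $\mbx_0 = (\varrho\beta, \varrho\alpha, 0)$ and set $u^* := \varrho/(p-2q)$. Using $\alpha = (p-q)/(p-2q)$ and $\beta = (q-p+1)/(p-2q)$, a direct computation gives $\mbx_0 = u^*\mathbf{V}_2$. I would then verify that the line $\real\mathbf{V}_2$ is invariant under~\eqref{syst2} by evaluating $\mathbf{F}(a\mathbf{V}_2)$ componentwise: the identities $(N+Z_*)(q-p+1)=q$ and $\alpha(q-p+1) = \beta(p-q)$ (equivalent to $\alpha/\beta = \mu$) make $F_3(a\mathbf{V}_2)$ vanish and cancel the quadratic contributions in $F_1$ and $F_2$, leaving $\mathbf{F}(a\mathbf{V}_2) = \lambda_2 a\mathbf{V}_2$. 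The trajectory issued from $\mbx_0$ is therefore $\mathbf{\Phi}(\eta;\mbx_0) = u^* e^{\lambda_2\eta}\mathbf{V}_2$; since $\lambda_2 < 0$, this segment remains in $\mathcal{V}$ (taken star-shaped around $\mathbf{0}$) and converges to $\mathbf{0}$, so $\mbx_0 \in \mathcal{W}_s^{\mathcal{V}}(\mathbf{0})$. Along this trajectory $U(\eta) = u^* e^{\lambda_2\eta}$ and $V(\eta) \equiv 0$, yielding $U_\infty(\mbx_0) = u^* = \varrho/(p-2q)$ and $V_\infty(\mbx_0) = 0$.

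The main obstacle is the uniform $C^1$-regularity of $\Psi$ near the origin: we need the decay estimates for $|U|+|V|$ coming from Lemma~\ref{lem.bound} and the implicit constants in the representation formulas of Lemmas~\ref{lem.U} and~\ref{lem.V} to be controllable uniformly for $\mbx_0$ in a neighborhood of $\mathbf{0}$. This amounts to an initial smallness condition on $|U(0)|+|V(0)|$ that can be enforced by shrinking $\mathcal{V}$, so that one may take $\eta_0 = 0$ in the proof of Lemma~\ref{lem.bound} and have all decay constants depend continuously on $\mbx_0$.
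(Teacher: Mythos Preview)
Your argument is correct in outline, and the explicit computation on the $\mathbf{V}_2$-line matches the paper's. However, for the injectivity the paper takes a genuinely different route. Instead of constructing the map $(u,v)\mapsto (U_\infty,V_\infty)$ and applying the inverse function theorem, the paper invokes Hartman's $C^1$ linearization theorem: there is a $C^1$-diffeomorphism $\Psi$ on a neighborhood $\mathcal{N}$ of $\mathbf{0}$ conjugating the flow to $e^{D\mathbf{F}(\mathbf{0})\eta}$, so that $\Psi_i(\mathbf{\Phi}(\eta;\mbx_0))=e^{\lambda_i\eta}\Psi_i(\mbx_0)$ in the eigenbasis. Comparing this with the expansion from Proposition~\ref{prop.stable} (with a case split on the sign of $\lambda_2-\lambda_3$) pins down $\Psi_2(\mbx_0),\Psi_3(\mbx_0)$ in terms of $(U_\infty,V_\infty)$, while $\Psi_1(\mbx_0)=0$ follows from $\lambda_1>0$; injectivity of $\Psi$ then finishes.

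The trade-off is this. Your inverse-function-theorem approach avoids the nontrivial Hartman theorem, but the price is the $C^1$-regularity of $(u,v)\mapsto (U_\infty,V_\infty)$, which---as you note---requires making the constants in Lemma~\ref{lem.bound} and the integral formulas of Lemmas~\ref{lem.U}--\ref{lem.V} locally uniform and differentiable in the initial data; this is doable but is real work rather than a formality. The paper's route packages all of that into Hartman's theorem. It also yields a slightly stronger statement: the paper obtains injectivity on \emph{all} of $\mathcal{W}_s^{\mathcal{V}}(\mathbf{0})$, without shrinking $\mathcal{V}$, by flowing both trajectories forward a common time $\eta_0$ into $\mathcal{N}$, identifying them there, and appealing to backward uniqueness. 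Your argument is inherently local and needs $\mathcal{V}$ small enough; this suffices for the applications downstream (Corollary~\ref{cor.uniq}), but is worth noting.
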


\begin{proof}
By Hartman's theorem \cite{Ha60} (see also \cite[Section~2.8]{Pe}), there exist a neighborhood $\mathcal{N}\subset\real^3$ containing the origin and a $C^1$-diffeomorphism $\Psi:\mathcal{N}\mapsto\Psi(\mathcal{N})$ such that $\Psi(\mathbf{0})=\mathbf{0}$ and, for any $\mbx_0\in\mathcal{N}$, there is an open interval $I_{\mbx_0}\subset\real$ containing zero such that
\begin{equation}\label{conjugacy}
\Psi\circ\mathbf{\Phi}(\eta;\mbx_0)=e^{D\textbf{F}(\mathbf{0})\eta}\Psi(\mbx_0),\qquad \eta\in I_{\mbx_0},
\end{equation}
recalling that $\mathbf{\Phi}$ is the flow associated to the dynamical system~\eqref{syst2}. In particular, if we decompose $\Psi$ on the basis formed by the eigenvectors $\{\mathbf{V}_1,\mathbf{V}_2,\mathbf{V}_3\}$ defined in~\eqref{eigenvectors}, then
\begin{equation*}
	\Psi=\sum\limits_{i=1}^3\Psi_i \mathbf{V}_i,
\end{equation*}
and the conjugacy~\eqref{conjugacy} gives
\begin{equation}\label{conj2}
	\Psi_i(\Phi(\eta;\mbx_0))=\Psi_i(\mbx_0) e^{\lambda_i\eta}, \qquad i\in\{1,2,3\}, \qquad \eta\in I_{\mbx_0}.
\end{equation}

Consider now $\mbx_{0,k}\in \mathcal{W}_s^{\mathcal{V}}(\mathbf{0})$, $k\in\{1,2\}$, such that
\begin{equation}
	\big( U_\infty(\mbx_{0,1}) , V_\infty(\mbx_{0,1}) \big) = \big( U_\infty(\mbx_{0,2}) , V_\infty(\mbx_{0,2}) \big) =:(U_\infty,V_\infty). \label{p15}
\end{equation}
Owing to the definition of $\mathcal{W}_s^{\mathcal{V}}(\mathbf{0})$, there is $\eta_0\ge 0$ such that $\mathbf{\Phi}(\eta;\mbx_{0,k})\in\mathcal{N}$ for all $\eta\ge \eta_0$ and $k\in\{1,2\}$. Therefore, setting $\mbx_{k}:= \mathbf{\Phi}(\eta_0;\mbx_{0,k})$ for $k\in\{1,2\}$,  we have $[0,\infty) \subset I_{\mbx_{k}}$ and we infer from~\eqref{conj2} that
\begin{equation}
	\Psi_i(\Phi(\eta;\mbx_{k}))=\Psi_i(\mbx_{k}) e^{\lambda_i \eta}, \qquad i\in\{1,2,3\}, \qquad \eta\geq 0. \label{p11}
\end{equation}
Moreover, since $\Psi\in C^1(\mathcal{N})$ with $\Psi(\mathbf{0})=\mathbf{0}$, we deduce from Proposition~\ref{prop.stable} the following expansion as $\eta\to\infty$
\begin{equation}\label{exp.flow}
\begin{split}
\Psi_i(\Phi(\eta;\mbx_{k}))&\sim\partial_X\Psi_i(0)X_k(\eta)+\partial_Y\Psi_i(0)Y_k(\eta)+\partial_W\Psi_i(0)W_k(\eta)\\
&\sim\partial_X\Psi_i(0)(q-p+1)U_\infty(\mbx_{k}) e^{\lambda_2\eta} + \partial_Y\Psi_i(0)(p-q)U_\infty(\mbx_{k}) e^{\lambda_2\eta} \\
& \quad +\partial_W\Psi_i(0) V_\infty(\mbx_{k}) e^{\lambda_3\eta} \\
&\sim \omega_{i,2} U_\infty(\mbx_{k}) e^{\lambda_2\eta} + \omega_{i,3} V_\infty(\mbx_{k}) e^{\lambda_3\eta},
\end{split}
\end{equation}
for each $i\in\{1,2,3\}$ and $k\in\{1,2\}$, where $(X_k,Y_k,W_k)=\mathbf{\Phi}(\cdot;\mbx_{k})$ and
\begin{equation*}
	\omega_{i,j} := \langle\nabla\Psi_i(0),\mathbf{V}_j\rangle, \qquad (i,j)\in \{1,2,3\}\times \{2,3\}.
\end{equation*}
In addition,
\begin{equation}
	U_\infty(\mbx_k) = U_\infty e^{\lambda_2 \eta_0} \;\;\text{ and }\;\; V_\infty(\mbx_k) = V_\infty e^{\lambda_3 \eta_0}, \qquad k\in\{1,2\}, \label{p16}
\end{equation}
by~\eqref{p15}, since $\mathbf{\Phi}(\eta;\mbx_{0,k}) = (X_k,Y_k,W_k)(\eta-\eta_0)$ for $\eta\ge0$ and $k\in\{1,2\}$. Combining~\eqref{p11}, \eqref{exp.flow}, and~\eqref{p16} gives
\begin{equation}
	\Psi_i(\mbx_{k}) e^{\lambda_i \eta} \sim \omega_{i,2} U_\infty e^{\lambda_2(\eta_0+\eta)} + \omega_{i,3} V_\infty e^{\lambda_3(\eta_0+\eta)} \label{p12}
\end{equation}
as $\eta\to\infty$ for each $i\in\{1,2,3\}$ and $k\in\{1,2\}$. Since $\lambda_1>0$, an immediate consequence of~\eqref{p12} applied with $i=1$ is that
\begin{equation*}
		\Psi_1(\mbx_{k}) = 0, \qquad k\in\{1,2\}.
\end{equation*}
It next follows from~\eqref{p12} applied with $i=2$ and $i=3$ that, as $\eta\to\infty$,
\begin{align}
	\Psi_2(\mbx_{k}) & \sim \omega_{2,2} U_\infty e^{\lambda_2 \eta_0} + \omega_{2,3} V_\infty e^{\lambda_3 \eta_0} e^{(\lambda_3-\lambda_2)\eta}, \label{p14a}\\
	\Psi_3(\mbx_{k}) & \sim \omega_{3,2} U_\infty e^{\lambda_2 \eta_0} e^{(\lambda_2-\lambda_3)\eta} + \omega_{3,3} V_\infty e^{\lambda_3 \eta_0}, \label{p14b}
\end{align}
for $k\in\{1,2\}$. At this stage, we split the rest of the analysis into three cases, according to the sign of $\lambda_2-\lambda_3$.

\medskip

\noindent \textbf{Case 1: $\lambda_3<\lambda_2$}. In this case, we readily infer from~\eqref{p14a} that
\begin{equation*}
	\Psi_2(\mbx_{k}) =\omega_{2,2} U_{\infty} e^{\lambda_2 \eta_0}, \qquad k\in\{1,2\},
\end{equation*}
while~\eqref{p14b} entails
\begin{equation*}
	\omega_{3,2} U_\infty = 0, \qquad \Psi_3(\mbx_{k}) = \omega_{3,3} V_\infty e^{\lambda_3 \eta_0}, \qquad k\in\{1,2\}.
\end{equation*}
Consequently
\begin{equation}\label{interm35}
	\mbx_{1} = \mbx_{2} = \Psi^{-1}\left( 0,\omega_{2,2} U_{\infty} e^{\lambda_2 \eta_0},\omega_{3,3} V_{\infty}e^{\lambda_3 \eta_0} \right),
\end{equation}
and the Cauchy-Lipschitz theorem implies that $\mbx_{0,1}=\mbx_{0,2}$, as claimed.

\medskip

\noindent \textbf{Case 2: $\lambda_2=\lambda_3$}. In that case, it readily follows from~\eqref{p14a} and~\eqref{p14b} that, for $k\in\{1,2\}$,
\begin{align*}
	\Psi_2(\mbx_{k}) & = \omega_{2,2} U_\infty e^{\lambda_2 \eta_0} + \omega_{2,3} V_\infty e^{\lambda_3 \eta_0} , \\
	\Psi_3(\mbx_{k}) & = \omega_{3,2} U_\infty e^{\lambda_2 \eta_0} + \omega_{3,3} V_\infty e^{\lambda_3 \eta_0},
\end{align*}
so that
\begin{equation*}
	\mbx_{1} = \mbx_{2} = \Psi^{-1}\left( 0, \omega_{2,2} U_\infty e^{\lambda_2 \eta_0} + \omega_{2,3} V_\infty e^{\lambda_3 \eta_0} , \omega_{3,2} U_\infty e^{\lambda_2 \eta_0} + \omega_{3,3} V_\infty e^{\lambda_3 \eta_0} \right).
\end{equation*}
Thus, $\mbx_{0,1}=\mbx_{0,2}$ by the Cauchy-Lipschitz theorem.

\medskip

\noindent \textbf{Case 3: $\lambda_3>\lambda_2$}. It is very similar to Case~1. Indeed, we deduce from~\eqref{p14a} and~\eqref{p14b} that, for $k\in\{1,2\}$,
\begin{equation*}
		\Psi_2(\mbx_{k}) = \omega_{2,2} U_\infty e^{\lambda_2 \eta_0}, \quad \omega_{2,3} V_\infty = 0, \quad \Psi_3(\mbx_{k}) = \omega_{3,3} V_\infty e^{\lambda_3 \eta_0},
\end{equation*}
and we arrive again to~\eqref{interm35}, thereby completing the proof of the first statement in Proposition~\ref{prop.uniq}.

Consider now $\varrho\in\real$. Then direct computations reveal that
\begin{equation*}
	\Phi(\eta;(\varrho\beta,\varrho\alpha,0)) = (\varrho\beta,\varrho\alpha,0)e^{\lambda_2 \eta} \;\;\text{ for }\;\; \eta\in\real,
\end{equation*}
and the claim readily follows from~\eqref{stable.man} and this explicit formula.
\end{proof}

We are now in a position to complete the proof of Theorem~\ref{th.dimN}.

\begin{proof}[Proof of Theorem~\ref{th.dimN}]
By Corollary~\ref{cor.B}, the set $\mathcal{B}$ defined in Section~\ref{sec.exist} is non-empty and there is thus $a\in\mathcal{B}$ such that the solution $f(\cdot;a)$ to~\eqref{CP} satisfies $R(a)=\infty$ and
\begin{equation*}
	\lim\limits_{r\to\infty} r^\mu f(r;a) = K^*.
\end{equation*}
Let us now prove the expansion~\eqref{tail.SSS}. To this end, we recall that, according to the discussion closing Section~\ref{subsec.dynamic}, $\mbx_a := \big( X(\cdot;a),Y(\cdot;a),Z(\cdot;a)-Z_* \big)$ defined in~\eqref{change.var} is a complete orbit of~\eqref{syst2} which is included in $\mathcal{W}_s(\mathbf{0})$. In particular, there is $\eta_a\in\real$ such that $\mbx_a(\eta)\in \mathcal{W}_s^{\mathcal{V}}(\mathbf{0})$ for all $\eta\ge\eta_a$ and we infer from Proposition~\ref{prop.stable} that there are $\big(U_{\infty,a},V_{\infty,a}\big)\in\real^2$ such that
\begin{equation}
	\mbx_a(\eta) = \left[ U_{\infty,a} e^{\lambda_2 \eta} + o(e^{\lambda_2 \eta}) \right] \mathbf{V}_2 + \left[ V_{\infty,a} e^{\lambda_3 \eta} + o(e^{\lambda_3 \eta}) \right] \mathbf{V}_3 \qquad {\rm as} \ \eta\to\infty. \label{p17}
\end{equation}
Undoing the transformation~\eqref{change.var} for $Y(\cdot;a)$, we deduce from~\eqref{p17} that
\begin{equation*}
	Y(\ln{r};a)=r^2(-f'(r;a))^{2-p}\sim(p-q)U_{\infty,a}r^{\lambda_2} \qquad {\rm as} \ r\to\infty,
\end{equation*}
or equivalently
\begin{equation*}
	-f'(r;a)\sim\left[(p-q)U_{\infty,a}\right]^{1/(2-p)}r^{(\lambda_2-2)/(2-p)}=\left[(p-q)U_{\infty,a}\right]^{1/(2-p)}r^{-1/(q-p+1)}.
\end{equation*}
Comparing with~\eqref{deriv.f}, we find that $U_{\infty,a}$ is uniquely determined by
\begin{equation}\label{uinf}
U_{\infty,a}=\frac{1}{p-q}(\mu K^*)^{2-p}>0.
\end{equation}
Similarly, transforming back $Z(\cdot;a)$ to $f'(\cdot;a)$ according to~\eqref{change.var}, we infer from~\eqref{p17} that
\begin{equation*}
	Z(\ln{r};a)=r(-f'(r;a))^{q-p+1}\sim Z_*+V_{\infty,a}r^{\lambda_3}  \qquad {\rm as} \ r\to\infty,
\end{equation*}
hence
\begin{equation*}
	-r^{1/(q-p+1)}f'(r;a)\sim(Z_*+V_{\infty,a}r^{\lambda_3})^{1/(q-p+1)}\sim Z_*^{\mu+1}+\frac{V_{\infty,a}Z_*^\mu}{q-p+1}r^{\lambda_3}.
\end{equation*}
This gives, after one integration,
\begin{equation}\label{interm36}
f(r;a)\sim\frac{Z_*^{\mu+1}}{\mu}r^{-\mu}+\frac{V_{\infty,a}Z_*^\mu}{(\mu-\lambda_3)(q-p+1)}r^{\lambda_3-\mu} \qquad {\rm as} \ r\to\infty,
\end{equation}
which, together with Corollary~\ref{cor.B}, guarantees that $V_{\infty,a}\leq 0$.
Noticing that
\begin{equation*}
	Z_*^{\mu+1}=\mu K^*=[(p-q)U_{\infty,a}]^{1/(2-p)}, \qquad \lambda_3=-\theta
\end{equation*}
and setting
\begin{equation}
	A_a := -\frac{V_{\infty,a} Z_*^\mu}{(\mu-\lambda_3)(q-p+1)} \ge 0, \label{p18}
\end{equation}
we observe that~ \eqref{interm36} implies~\eqref{tail.SSS} with $A=A_a$.

Finally, assume for contradiction that $V_{\infty,a}=A_a=0$. Then $Z(\cdot;a)\equiv Z_*$ by Proposition~\ref{prop.uniq} and thus $r(-f'(r;a))^{q-p+1}=Z_*$ for $r\ge 0$; that is, $f(r;a)=Z_*^{\mu+1} r^{-\mu}/\mu = K^* r^{-\mu}$ for $r>0$ and a contradiction. Consequently, $A_a>0$ and the proof is complete.
\end{proof}

Another very useful consequence of the previous analysis is the following result.

\begin{corollary}\label{cor.uniq}
Let $(a_1,a_2)\in (0,\infty)^2$ be such that $a_1\ne a_2$ and assume that the corresponding solutions $f(\cdot;a_1)$ and $f(\cdot;a_2)$ to~\eqref{CP} satisfy $R(a_1)=R(a_2)=\infty$ and~\eqref{tail.SSS} with corresponding constants $(A_{a_1},A_{a_2})\in\real^2$. Then $A_{a_1}\neq A_{a_2}$.
\end{corollary}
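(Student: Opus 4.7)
The plan is to derive a contradiction from the assumption that $a_1\neq a_2$ while $A_{a_1}=A_{a_2}$, by invoking the injectivity of the map $\mbx_0\mapsto(U_\infty(\mbx_0),V_\infty(\mbx_0))$ provided by Proposition~\ref{prop.uniq}. First, I would translate the equality $A_{a_1}=A_{a_2}$ into information on the asymptotic coefficients of the orbits $\mbx_{a_i}=(X,Y,Z-Z_*)(\cdot;a_i)$ constructed via~\eqref{change.var}. By~\eqref{uinf}, the coefficient $U_{\infty,a}=(\mu K^*)^{2-p}/(p-q)$ does not depend on $a\in\mathcal{B}$, while~\eqref{p18} expresses $V_{\infty,a}$ as a fixed nonzero multiple of $A_a$. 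Hence, under the assumption, both $U_{\infty,a_1}=U_{\infty,a_2}$ and $V_{\infty,a_1}=V_{\infty,a_2}$.

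Next, since $\mbx_{a_i}(\eta)\to\mathbf{0}$ as $\eta\to\infty$ for $i\in\{1,2\}$, I would pick $\eta_0\in\real$ large enough so that $\mbx_{a_i}(\eta_0)\in\mathcal{W}_s^{\mathcal{V}}(\mathbf{0})$ for $i\in\{1,2\}$. Writing $\mbx_{a_i}(\eta_0+\tau)=\mathbf{\Phi}(\tau;\mbx_{a_i}(\eta_0))$ for $\tau\ge0$ and comparing the asymptotic expansion given by Proposition~\ref{prop.stable} applied to this shifted orbit with the one stated for $\mbx_{a_i}(\eta)$ as $\eta\to\infty$ inside the proof of Theorem~\ref{th.dimN}, one obtains the shift relations
\begin{equation*}
U_\infty\big(\mbx_{a_i}(\eta_0)\big)=U_{\infty,a_i}\,e^{\lambda_2\eta_0},\qquad V_\infty\big(\mbx_{a_i}(\eta_0)\big)=V_{\infty,a_i}\,e^{\lambda_3\eta_0}.
\end{equation*}
The equalities $U_{\infty,a_1}=U_{\infty,a_2}$ and $V_{\infty,a_1}=V_{\infty,a_2}$ then imply $U_\infty(\mbx_{a_1}(\eta_0))=U_\infty(\mbx_{a_2}(\eta_0))$ and $V_\infty(\mbx_{a_1}(\eta_0))=V_\infty(\mbx_{a_2}(\eta_0))$, so that the injectivity statement of Proposition~\ref{prop.uniq} yields $\mbx_{a_1}(\eta_0)=\mbx_{a_2}(\eta_0)$. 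The Cauchy--Lipschitz theorem applied to~\eqref{syst2} then forces $\mbx_{a_1}\equiv\mbx_{a_2}$ on their common interval of definition, which is $\real$ in both cases.

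Finally, I would invert the change of variables~\eqref{change.var} on $(0,\infty)$. Using the positivity of $f(\cdot;a_i)$ and the non-positivity of $f'(\cdot;a_i)$ granted by Lemma~\ref{lem.1}, the values of $f(r;a_i)$ and $f'(r;a_i)$ for $r>0$ are uniquely recovered from $X(\ln r;a_i)$ and $Y(\ln r;a_i)$ through $(-f'(r;a_i))^{2-p}=Y(\ln r;a_i)/r^2$ and the algebraic relations of~\eqref{change.var}. The identity $\mbx_{a_1}\equiv\mbx_{a_2}$ thus propagates to $f(r;a_1)=f(r;a_2)$ for every $r>0$, and passing to the limit $r\to0$ in view of the continuity of $f(\cdot;a_i)$ up to the origin gives $a_1=f(0;a_1)=f(0;a_2)=a_2$, a contradiction. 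The delicate point I anticipate in writing this up carefully is the bookkeeping involved in the shift relations linking the coefficients $(U_{\infty,a},V_{\infty,a})$, which are defined by the asymptotic behavior of $\mbx_a(\eta)$ as $\eta\to\infty$, with the invariants $(U_\infty,V_\infty)$ attached in Lemmas~\ref{lem.U} and~\ref{lem.V} to a chosen starting point in $\mathcal{W}_s^{\mathcal{V}}(\mathbf{0})$, and in ensuring that the time origin $\eta_0$ is chosen uniformly for both orbits.
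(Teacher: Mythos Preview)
Your proposal is correct and follows essentially the same route as the paper's proof: assume $A_{a_1}=A_{a_2}$, deduce via~\eqref{uinf} and~\eqref{p18} that the asymptotic coefficients $(U_{\infty,a_i},V_{\infty,a_i})$ coincide, invoke the injectivity statement of Proposition~\ref{prop.uniq} to conclude that the orbits $\mbx_{a_1}$ and $\mbx_{a_2}$ are identical, and then undo~\eqref{change.var} to reach $a_1=a_2$. The paper's proof is terser and applies Proposition~\ref{prop.uniq} directly to the pair $(U_{\infty,a_i},V_{\infty,a_i})$ without spelling out the shift relations you write down; your added care in choosing a common $\eta_0$ and tracking the factors $e^{\lambda_2\eta_0}$, $e^{\lambda_3\eta_0}$ makes explicit a step the paper leaves implicit, but the underlying argument is the same.
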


\begin{proof}
Assume for contradiction that $A_{a_1}=A_{a_2}$. We then infer from~\eqref{uinf} and~\eqref{p18} that $(U_{\infty,a_1},V_{\infty,a_1}) = (U_{\infty,a_2},V_{\infty,a_2})$. Owing to Proposition~\ref{prop.uniq}, this equality implies that
\begin{equation*}
	 \big( X(\cdot;a_1),Y(\cdot;a_1),Z(\cdot;a_1)-Z_* \big) =  \big( X(\cdot;a_2),Y(\cdot;a_2),Z(\cdot;a_2)-Z_* \big),
\end{equation*}
hence $a_1=a_2$ by~\eqref{change.var}, and a contradiction.
\end{proof}

This corollary will be a decisive argument in the proof of the uniqueness part in Theorem~\ref{th.dim1}.

\section{Monotonicity and uniqueness}\label{sec.uniq}

Throughout this section, we restrict ourselves to dimension $N=1$. Let us recall that, for $a\in\mathcal{B}$, Theorem \ref{th.dimN} gives that
\begin{equation}\label{exp.infty}
f(r;a)=K^*r^{-\mu}\left[1-\frac{C(a)}{r}+o(r^{-1})\right] \qquad {\rm as} \ r\to\infty,
\end{equation}
for some $C(a)>0$. The next result establishes that two solutions of this kind are ordered.

\begin{proposition}\label{prop.monot}
Let $(a_1, a_2)\in\mathcal{B}^2$ such that $a_1<a_2$. Then $f(r;a_1)<f(r;a_2)$ for any $r\in[0,\infty)$.
\end{proposition}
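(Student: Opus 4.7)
I plan to use a shift-and-compare argument at the level of self-similar profiles, combined with the second-order tail information from~\eqref{exp.infty} and the separation provided by Corollary~\ref{cor.uniq}. Set $f_i := f(\cdot;a_i)$ for $i\in\{1,2\}$ and, for $\sigma \ge 0$, consider the shifted profile $g_\sigma(r) := f_1(r+\sigma)$ on $[0,\infty)$. A direct computation shows that $g_\sigma$ satisfies the same ODE as $f_2$ except for an additional drift term, namely
\begin{equation*}
(|g_\sigma'|^{p-2} g_\sigma')'(r) + \alpha g_\sigma(r) + \beta(r+\sigma) g_\sigma'(r) - |g_\sigma'(r)|^q = 0, \qquad r \ge 0.
\end{equation*}
Since $f_1$ is strictly decreasing by~\eqref{decr}, the comparison set $\Sigma := \{\sigma \ge 0 : g_\sigma \le f_2 \text{ on } [0,\infty)\}$ is an upward half-line $[\sigma^*,\infty)$ with $\sigma^* := \inf\Sigma$; using \eqref{fast.decay}, \eqref{exp.infty}, and Lemma~\ref{lem.1} one first checks that $\Sigma$ contains all sufficiently large $\sigma$, so $\sigma^*$ is finite, and the closedness of $\Sigma$ guarantees $\sigma^* \in \Sigma$. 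The goal is to show $\sigma^* = 0$.

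Assume towards contradiction that $\sigma^* > 0$. By the infimum definition there is a sequence $\sigma_n \uparrow \sigma^*$ and points $r_n \ge 0$ with $g_{\sigma_n}(r_n) > f_2(r_n)$, while $g_{\sigma^*} \le f_2$ on $[0,\infty)$. If a subsequence of $(r_n)$ converges to some $r_0 \in (0,\infty)$, then $g_{\sigma^*}(r_0) = f_2(r_0)$ and tangency from below gives $g_{\sigma^*}'(r_0) = f_2'(r_0) < 0$ (using \eqref{decr}) and $g_{\sigma^*}''(r_0) \le f_2''(r_0)$; subtracting the two ODEs at $r_0$ yields
\begin{equation*}
(p-1) |f_2'(r_0)|^{p-2}\bigl(g_{\sigma^*}''(r_0) - f_2''(r_0)\bigr) = -\beta\sigma^* f_2'(r_0) > 0,
\end{equation*}
a contradiction. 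If $r_n \to 0$, contact at $0$ forces $f_1(\sigma^*) = f_2(0) = a_2$, impossible because $f_1$ is strictly decreasing with $f_1(0) = a_1 < a_2$. The only remaining case is $r_n \to \infty$, for which \eqref{exp.infty} yields
\begin{equation*}
g_\sigma(r) - f_2(r) = K^* r^{-\mu-1}\bigl[C(a_2) - C(a_1) - \mu\sigma + o(1)\bigr] \qquad \text{as } r\to\infty.
\end{equation*}
Combined with the sign information on $g_{\sigma^*} - f_2$ at infinity and on $g_{\sigma_n}(r_n) - f_2(r_n)$, this two-term asymptotics is enough to exclude the subcase $r_n \to \infty$ except in the borderline situation $\sigma^* = (C(a_2) - C(a_1))/\mu$ with $C(a_1) < C(a_2)$. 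To close the argument in this last configuration, the plan is to compute the next term in the tail expansion: here Corollary~\ref{cor.uniq} together with Proposition~\ref{prop.uniq} become essential, since they imply that the complete orbits of~\eqref{syst2} on the stable manifold of $P_0$ attached to $a_1$ and $a_2$ are distinct, and therefore the subsequent terms in the asymptotic expansions of $f_1$ and $f_2$ (which are uniquely determined by the respective trajectories) must differ in a way that produces a strict sign in $g_{\sigma^*}(r) - f_2(r)$ at infinity contradicting $g_{\sigma^*} \le f_2$.

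Once $\sigma^* = 0$ is established, $f_1 \le f_2$ on $[0,\infty)$. Strict inequality then follows from standard ODE uniqueness for~\eqref{SSODE}: any equality $f_1(r_0) = f_2(r_0)$ with $f_2 - f_1 \ge 0$ would give $f_1'(r_0) = f_2'(r_0)$ and hence $f_1 \equiv f_2$, contradicting $a_1 \ne a_2$. The main obstacle is precisely the borderline touching-at-infinity subcase, where the first-order tail comparison only yields a tie after the shift and the decisive input comes from the fine behavior near $P_0$ encoded in Corollary~\ref{cor.uniq}.
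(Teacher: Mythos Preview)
Your sliding argument is correct up to the point where you isolate the borderline case $\sigma^* = (C(a_2)-C(a_1))/\mu>0$, but this case is a genuine gap. What Corollary~\ref{cor.uniq} and Proposition~\ref{prop.uniq} give you is only $C(a_1)\ne C(a_2)$, which you already know in the borderline configuration since $\sigma^*>0$ forces $C(a_1)<C(a_2)$. They say nothing about a third-order term, and the paper only establishes the expansion~\eqref{exp.infty} to second order. Even if you granted yourself a next term $D_i r^{-\mu-2}$ for $f_i$, the shift contributes its own $r^{-\mu-2}$ piece (from Taylor expanding $(r+\sigma^*)^{-\mu}$ and $(r+\sigma^*)^{-\mu-1}$), so the coefficient of $r^{-\mu-2}$ in $g_{\sigma^*}-f_2$ is a combination of $D_1$, $D_2$, $C_1$, $\sigma^*$ for which you provide no sign. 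You need it to be \emph{positive} to contradict $g_{\sigma^*}\le f_2$, and there is no mechanism in your proposal that forces this. Meanwhile, for $\sigma_n<\sigma^*$ the second-order coefficient $C(a_2)-C(a_1)-\mu\sigma_n=\mu(\sigma^*-\sigma_n)>0$ makes $g_{\sigma_n}(r)>f_2(r)$ for large $r$, so the scenario $r_n\to\infty$ is perfectly consistent and cannot be excluded by tail information at the level you have.

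The paper sidesteps this entirely by a different device: instead of sliding the ODE shift down to zero, it fixes one large shift $\rho'=(k+1)\rho$ so that $f_2(\cdot-\rho')>f_1$ on $(\rho',\infty)$ (this only needs the second-order tail), and then applies the \emph{PDE} comparison principle on the half-line $(\rho',\infty)$ to the self-similar solutions $u_1(t,x)=(1-t)^\alpha f_1(x(1-t)^\beta)$ and $U_2(t,x)=u_2(t,x-\rho')$, with the lateral boundary inequality $U_2(t,\rho')=(1-t)^\alpha a_2\ge (1-t)^\alpha a_1\ge u_1(t,\rho')$. Reading off the resulting inequality at time $t$ gives $f_2\bigl(r-\rho'(1-t)^\beta\bigr)\ge f_1(r)$, and letting $t\to 1$ shrinks the shift to zero for free. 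So the self-similar scaling does the work that your sliding method cannot, and no tail information beyond~\eqref{exp.infty} is needed. Your final step (strict inequality via Cauchy--Lipschitz) is the same as the paper's.
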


\begin{proof}
We use the same approach as in \cite[Lemma~6]{FGV}. Since the proof therein is rather sketchy, we provide complete details below for the sake of completeness. Let $C_i=C(a_i)$ be the constant corresponding to $f(\cdot;a_i)$ in the asymptotic expansion~\eqref{exp.infty} and set $f_i:=f(\cdot;a_i)$ for $i\in\{1,2\}$ for simplicity, throughout this proof. According to the asymptotic expansion~\eqref{exp.infty}, there is $R_1>0$ such that, for any $r\geq R_1$, we have
$$
-\frac{K^*}{2}C_ir^{-(\mu+1)}\leq f_i(r)-K^*r^{-\mu}(1-C_ir^{-1})\leq \frac{K^*}{2}C_ir^{-(\mu+1)},
$$
or, equivalently,
\begin{equation}\label{interm17}
K^*r^{-\mu}\left(1-\frac{3C_i}{2r}\right)\leq f_i(r)\leq K^*r^{-\mu}\left(1-\frac{C_i}{2r}\right), \qquad r\geq R_1.
\end{equation}
We extend $f_i$ to $\real$ by setting $f_i(r):=f_i(-r)$ for $r\leq0$ and note that $f_i\in C^2(\real)$ for $i\in\{1,2\}$. Consider $\rho>0$ (to be determined later) and define
$$
h(r):=f_2(r-\rho), \qquad r\in\real.
$$
We first infer from~\eqref{interm17} that, for $r>R_1+\rho$, we have
\begin{equation*}
\begin{split}
h(r)-f_1(r)&=f_2(r-\rho)-f_1(r)\geq K^* \left[ (r-\rho)^{-\mu} \left(1-\frac{3C_2}{2(r-\rho)}\right) - r^{-\mu} \left(1-\frac{C_1}{2r}\right)\right]\\
&\geq K^* r^{-\mu} \left[ \left(1-\frac{\rho}{r}\right)^{-\mu} - \frac{3C_2}{2r} \left(1-\frac{\rho}{r}\right)^{-(\mu+1)} - 1 + \frac{C_1}{2r}\right]\\
&=K^*r^{-\mu}\left[\mu\int_{-\rho/r}^0(1+z)^{-(\mu+1)}\,dz+\frac{C_1}{2r}-\frac{3C_2}{2r}\left(1-\frac{\rho}{r}\right)^{-(\mu+1)}\right]\\
&\geq K^*r^{-\mu}\left[\frac{\mu \rho}{r} + \frac{C_1}{2r} - \frac{3C_2}{2r} \left(1-\frac{\rho}{r}\right)^{-(\mu+1)}\right]\\
&\geq K^* r^{-\mu} \left[\frac{2\mu\rho+C_1-3C_2}{2r} + \frac{3C_2}{2r} \left(1-\left(1-\frac{\rho}{r}\right)^{-(\mu+1)}\right)\right]\\
&=K^*r^{-\mu}\left[\frac{2\mu\rho + C_1-3C_2}{2r} - (\mu+1)\frac{3C_2}{2r}\int_{-\rho/r}^{0}(1+z)^{-(\mu+2)}\,dz\right]\\
&\geq K^* r^{-\mu} \left[\frac{2\mu\rho +C_1-3C_2}{2r} - (\mu+1)\frac{3C_2}{2r} \int_{-\rho/r}^{0}\left(1-\frac{\rho}{r}\right)^{-(\mu+2)}\,dz\right]\\
&\geq K^* r^{-\mu} \left[ \frac{2\mu\rho+C_1-3C_2}{2r} - (\mu+1)\frac{3\rho C_2}{2r^2} \left(1-\frac{\rho}{r}\right)^{-(\mu+2)}\right].
\end{split}
\end{equation*}
Assuming further that $r\geq k\rho$ in addition to $r\geq R_1+\rho$ for some $k\geq1$, we conclude that
\begin{equation}\label{interm18}
h(r)-f_1(r)\geq\frac{K^*}{2} r^{-(\mu+1)} \left[ 2\mu\rho +C_1- 3C_2 - \frac{3(\mu+1)C_2}{k} \left(\frac{k-1}{k}\right)^{-(\mu+2)}\right]
\end{equation}
for $r\geq\max\{R_1+\rho,k\rho\}$. We now turn our attention to $r<0$. If $r<-R_1$, by performing analogous steps as in the above estimate, we deduce from \eqref{interm17} that
\begin{equation*}
\begin{split}
h(r)-f_1(r)&=f_2(r-\rho)-f_1(r) =f_2(\rho-r) - f_1(r) \\
&\leq K^* \left[(\rho-r)^{-\mu} \left(1-\frac{C_2}{2(\rho-r)}\right) - (-r)^{-\mu} \left(1+\frac{3C_1}{2r}\right)\right]\\
&\leq K^* |r|^{-\mu} \left[\left(1+\frac{\rho}{|r|}\right)^{-\mu} - 1 + \frac{3C_1}{2|r|}\right]\\
&=K^* |r|^{-\mu} \left[ -\mu\int_{0}^{\rho/|r|}(1+z)^{-(\mu+1)}\,dz + \frac{3C_1}{2|r|}\right]\\
&\leq K^* |r|^{-\mu} \left[ \frac{-\mu\rho}{|r|} \left(1+\frac{\rho}{|r|}\right)^{-(\mu+1)} + \frac{3C_1}{2|r|}\right]\\
&\leq K^* |r|^{-\mu} \left[ \frac{3C_1-2\mu\rho}{2|r|} + \frac{\mu\rho}{|r|} \left(1-\left(1+\frac{\rho}{|r|}\right)^{-(\mu+1)}\right) \right]\\
&=K^* |r|^{-\mu} \left[ \frac{3C_1-2\mu\rho}{2|r|} + (\mu+1) \frac{\mu\rho}{|r|} \int_{0}^{\rho/|r|} (1+z)^{-(\mu+2)}\,dz\right]\\
&\leq K^* |r|^{-\mu} \left[ \frac{3C_1-2\mu\rho}{2|r|} + \frac{\mu(\mu+1)\rho^2}{r^2}\right].
\end{split}
\end{equation*}
Therefore, if $r\leq-k\rho$ in addition to $r\leq-R_1$, we conclude that
\begin{equation}\label{interm19}
h(r)-f_1(r)\leq\frac{K^*}{2} |r|^{-(\mu+1)} \left[ 3C_1-2\mu\rho + \frac{2\mu(\mu+1)\rho}{k} \right],
\end{equation}
for $|r|\geq\max\{R_1,k\rho\}$. We next choose $\rho$ such that
\begin{equation*}
	\rho>\max\left\{\frac{3C_1}{\mu},\frac{3C_2-C_1}{\mu},R_1\right\},
\end{equation*}
and pick $k\geq 2(\mu+1)$ sufficiently large such that
\begin{equation*}
	\frac{\mu \rho}{3C_2(\mu+1)}\geq\frac{1}{k}\left(\frac{k-1}{k}\right)^{-(\mu+2)}.
\end{equation*}
With this choice of $\rho$ and $k$, we note that $k\rho \geq R_1+\rho \geq R_1$ and we infer from~\eqref{interm18} and~\eqref{interm19} that $h(r)-f_1(r)>0$ for $r\geq k\rho$ and $h(r)-f_1(r)<0$ for $r\leq -k\rho$. We have thus shown that
\begin{equation}\label{interm20}
\begin{split}
&f_2(r-\rho)>f_1(r) \qquad {\rm for} \ r\geq k\rho,\\
&f_2(r-\rho)<f_1(r) \qquad {\rm for} \ r\leq-k\rho.
\end{split}
\end{equation}
Consider now $r\in(-\infty,0)$. There exists an integer $j\geq 0$ such that $-(j+1)\rho \leq r < -j\rho$. Either $j\geq k$, so that $r\leq-k\rho$, $r-(k+1)\rho\leq r-\rho\leq 0$ and it follows from the monotonicity of $f_2$ on $(-\infty,0)$ and~\eqref{interm20} that
$$
f_2(r-(k+1)\rho)\leq f_2(r-\rho)<f_1(r).
$$
Or $j\in\{0,1,...,k-1\}$. In that case, $r-(k-j)\rho=r+j\rho-k\rho<-k\rho$ and we infer from~\eqref{interm20} that
$$
f_2(r-(k-j)\rho-\rho)<f_1(r-(k-j)\rho),
$$
and the monotonicity of $f_1$ and $f_2$ on $(-\infty,0)$ entails that
$$
f_2(r-(k+1)\rho)\leq f_2(r-(k+1-j)\rho)<f_1(r-(k-j)\rho)\leq f_1(r).
$$
Putting the previous lines together, we have established that
\begin{equation}\label{interm21}
	f_2(r-(k+1)\rho)<f_1(r) \qquad {\rm for} \ r<0.
\end{equation}
Consider next $r>(k+1)\rho$. Then $r-\rho\geq r-(k+1)\rho\geq0$, and we derive from the monotonicity of $f_2$ on $(0,\infty)$ and~\eqref{interm20} that
\begin{equation}\label{interm22}
	f_2(r-(k+1)\rho)\geq f_2(r-\rho)>f_1(r) \qquad {\rm for} \ r>(k+1)\rho.
\end{equation}
It readily follows from~\eqref{interm21} and~\eqref{interm22} that the function
$$
\chi(r):=f_2(r-(k+1)\rho)-f_1(r), \qquad r\in\real,
$$
has at least one zero in the interval $[0,(k+1)\rho]$. Let us denote the smallest zero of $\chi$ by $r_0\in[0,(k+1)\rho]$. On the one hand, we have $\chi(r_0)=0$ and $\chi(r)<0$ for $r<r_0$. On the other hand, if $r\in(r_0,(k+1)\rho]$, then
$$
0>r-(k+1)\rho>r_0-(k+1)\rho,
$$
and the monotonicity of $f_1$ on $(0,\infty)$ and of $f_2$ on $(-\infty,0)$ give that
$$
f_2(r-(k+1)\rho)>f_2(r_0-(k+1)\rho)=f_1(r_0)>f_1(r),
$$
so that $\rho(r)>0$ for $r>r_0$. We have just proved that $h$ has a single zero $r_0$ in $[0,(k+1)\rho]$.

We next recall that the functions $u_1$ and $u_2$ defined by
$$
u_i(t,x) := (1-t)^{\alpha}f_i(x(1-t)^{\beta}), \qquad (t,x)\in[0,1]\times\real,
$$
are solutions to the partial differential equation~\eqref{eq1} for $i\in\{1,2\}$. Since~\eqref{eq1} is invariant with respect to translations, the function
$$
U_2(t,x) := u_2(t,x-(k+1)\rho), \qquad (t,x)\in[0,1]\times\real,
$$
also solves \eqref{eq1}. Noticing that we can write
$$
U_2(t,x)=(1-t)^{\alpha}f_2(x(1-t)^{\beta}-(1-t)^{\beta}(k+1)\rho),
$$
we observe on the one hand that, for $t\in(0,1)$,
\begin{equation}\label{interm23}
U_2(t,(k+1)\rho)=(1-t)^{\alpha} a_2 \geq(1-t)^{\alpha}a_1\geq u_1(t,(k+1)\rho),
\end{equation}
where we have used the fact that $a_1=f_1(0)=\max\{f_1(r):r\in\real\}$. On the other hand, for $x\in((k+1)\rho,\infty)$ we infer from~\eqref{interm22} that
\begin{equation}\label{interm24}
U_2(0,x)=f_2(x-(k+1)\rho)>f_1(x)=u_1(0,x).
\end{equation}
The comparison principle applied on $(0,1)\times((k+1)\rho,\infty)$, together with~\eqref{interm23} and~\eqref{interm24}, implies that $U_2(t,x)\geq u_1(t,x)$ for $(t,x)\in(0,1)\times((k+1)\rho,\infty)$, whence
$$
	f_2(x(1-t)^{\beta}-(k+1)\rho(1-t)^{\beta})\geq f_1(x(1-t)^{\beta}), \qquad (t,x)\in(0,1)\times((k+1)\rho,\infty),
$$
or equivalently
\begin{equation}\label{interm25}
f_2(r-(k+1)\rho(1-t)^{\beta})\geq f_1(r), \qquad t\in(0,1), \ r\in((k+1)\rho(1-t)^{\beta},\infty).
\end{equation}
Owing to the continuity of $f_1$ and $f_2$, we can pass to the limit as $t\to1$ in~\eqref{interm25} and conclude that $f_1(r)\leq f_2(r)$ for any $r>0$, the inequality being obvious for $r=0$ from the fact that $a_1=f_1(0)<f_2(0)=a_2$.

Let us finally assume for contradiction that there is $r_0\ge 0$ such that $f_1(r_0)=f_2(r_0)$. Then $r_0>0$ due to $a_1<a_2$ and it follows from the just established non-positivity of $f_1-f_2$ that $f_1-f_2$ has a maximum at $r_0$; that is, we have also $f_1'(r_0)=f_2'(r_0)$ and the Cauchy-Lipschitz theorem implies that $f_1\equiv f_2$ on $[0,\infty)$, contradicting $a_1<a_2$. The proof is now complete.
\end{proof}

We are now in a position to complete the proof of the uniqueness part in Theorem~\ref{th.dim1}.

\begin{proof}[Proof of Theorem~\ref{th.dim1}: uniqueness]
Assume for contradiction that there are $(a_1,a_2)\in\mathcal{B}^2$ with $a_1<a_2$. Setting $f_i=f(\cdot;a_i)$ for $i\in\{1,2\}$ as above, we infer from Proposition~\ref{prop.monot} that $f_1(r)<f_2(r)$ for any $r\geq 0$. Since
\begin{equation}
	C_i := \lim\limits_{r\to\infty} \frac{r\big( K^* - r^\mu f_i(r) \big)}{K^*} \label{p20}
\end{equation}
is well-defined and positive for $i\in\{1,2\}$ according to~\eqref{exp.infty} and Theorem~\ref{th.dimN}, we deduce from Proposition~\ref{prop.monot} that $0<C_2\leq C_1$. In fact, the strict inequality $C_2<C_1$ holds true due to Corollary~\ref{cor.uniq}. Fix then
$$
\tau_0=\frac{1}{2}\left[1-\left(\frac{C_2}{C_1}\right)^{1/\beta}\right]\in\left(0,\frac{1}{2}\right).
$$
We then readily find that $C_2<C_1(1-\tau_0)^{\beta}$, which guarantees the existence of $\varepsilon_0>0$ such that $C_2+\varepsilon_0 < (C_1-\varepsilon_0)(1-\tau_0)^{\beta}$. We then observe that, for $r>0$ and $\tau\in [0,\tau_0]$,
\begin{align*}
	K^* r^{-\mu} - K^* (C_1-\varepsilon_0) r^{-(\mu+1)} & \leq K^* r^{-\mu} - K^* (C_2+\varepsilon_0) (1-\tau_0)^{-\beta} r^{-(\mu+1)} \\
	& \leq K^* r^{-\mu} - K^* (C_2+\varepsilon_0) (1-\tau)^{-\beta} r^{-(\mu+1)} ,
\end{align*}
whence, taking into account that $\alpha=\mu\beta$,
\begin{equation}\label{interm37}
	\begin{split}
	K^*r^{-\mu} & - K^* (C_1-\varepsilon_0) r^{-(\mu+1)} \\
	& \leq (1-\tau)^\alpha \left[ K^* (1-\tau)^{-\mu\beta} r^{-\mu} - K^* (C_2+\varepsilon_0) (1-\tau)^{-(\mu+1)\beta} r^{-(\mu+1)} \right].
	\end{split}
\end{equation}
Owing to~\eqref{p20}, there is $R_{\varepsilon_0}>0$ such that, for $r\ge R_{\varepsilon_0}$,
\begin{align*}
	f_1(r) & \le K^* r^{-\mu} - K^*(C_1-\varepsilon_0) r^{-(\mu+1)}, \\
	f_2(r) & \ge K^* r^{-\mu} - K^*(C_2+\varepsilon_0) r^{-(\mu+1)} .
\end{align*}
Now, for $r\ge R_0 := R_{\varepsilon_0}(1-\tau_0)^{-\beta}$ and $\tau\in [0,\tau_0]$, one has $r\ge r(1-\tau)^\beta \ge R_{\varepsilon_0}$ and we infer from~\eqref{interm37} and the above bounds that
\begin{equation*}\
	f_1(r)\le(1-\tau)^{\alpha}f_2(r(1-\tau)^{\beta}), \qquad (\tau,r)\in [0,\tau_0]\times [R_0,\infty).
\end{equation*}
It remains to work on the compact interval $r\in[0,R_0]$. But, since $f_1(r)<f_2(r)$ for $r\in[0,R_0]$ and
\begin{equation*}
	\lim\limits_{\tau\to0}(1-\tau)^{\alpha}f_2(r(1-\tau)^{\beta})=f_2(r),
\end{equation*}
uniformly for $r\in[0,R]$, it follows that there exists $\tau_1\in(0,\tau_0)$ such that
\begin{equation*}
	f_1(r) \le(1-\tau)^{\alpha} f_2(r(1-\tau)^{\beta}), \qquad (\tau,r)\in [0,\tau_1]\times [0,R_0].
\end{equation*}
Consequently,
\begin{equation}\label{interm38}
	f_1(r) \le(1-\tau)^{\alpha} f_2(r(1-\tau)^{\beta}), \qquad (\tau,r)\in [0,\tau_1]\times [0,\infty),
\end{equation}
so that, coming back to  self-similar solutions and defining the functions
$$
U_1(t,x)=(1-t)^{\alpha}f_1(|x|(1-t)^{\beta}), \qquad U_2(t,x)=(1-\tau_1-t)^{\alpha}f_2(|x|(1-\tau_1-t)^{\beta}),
$$
both are solutions to Eq.~\eqref{eq1} and~\eqref{interm38} implies that $U_1(0,x)<U_2(0,x)$ for any $x\in\real$. The comparison principle then entails that
$$
U_1(t,x)\leq U_2(t,x), \qquad t\geq 0.
$$
But this is a contradiction, since $U_2$ vanishes uniformly at time $1-\tau_1<1$, while the extinction time of $U_1$ is $t=1$. This contradiction completes the proof.
\end{proof}

\section{Discussion}\label{sec.disc}

Having established in Theorem~\ref{th.dim1} the existence and uniqueness of a self-similar solution to~\eqref{eq1} in one space dimension, we expect that it attracts a wide class of non-negative solutions to the associated initial value problem near their extinction time. One possible approach to prove such a convergence result is to construct a Lyapunov functional, which is in principle possible in one space dimension, following the approach designed in \cite{Z68}. Formal computations that we have performed indicate that a Lyapunov functional is indeed available in this case, but a rigorous justification requires  approximation arguments and uniform estimates that we have been unable to derive. In particular, the approach described in \cite{Z68} requires a detailed study of the system of ordinary differential equations
\begin{equation}
	V'(y) = W(y), \quad W'(y) = |W(y)|^{2-p} \big( |W(y)|^q - \alpha V(y) - \beta y W(y) \big), \qquad y\in\real. \label{ode}
\end{equation}
However, solutions to~\eqref{ode} might blow up at a finite $y\in\real$ due to the superlinearity of the right-hand side of the $W$-equation with respect to $W$ and could also be unbounded, increasing as $e^{Cy^2}$, as a consequence of the positivity of $\beta$ and the linear dependence of the right-hand side of the $W$-equation with respect to $y$. Despite some attempts, we have yet been unable to design a suitable approximation scheme to justify the availability of a Lyapunov functional and we hope to return to that problem in the future.

\section*{Acknowledgements} This work is partially supported by the Spanish project PID2020-115273GB-I00 and by the Grant RED2022-134301-T (Spain). Part of this work has been developed during visits of R. G. I. to Institut de Math\'ematiques de Toulouse and to Laboratoire de Math\'ematiques LAMA, Universit\'e de Savoie, and of Ph. L. to Universidad de Valencia, Instituto de Ciencias Matem\'aticas de Madrid (ICMAT) and Universidad Rey Juan Carlos, and both authors thank these institutions for hospitality and support.

\bibliographystyle{plain}

\end{document}